\newcommand{\p}{\partial}
\renewcommand{\d}{\mathrm{d}}
\newcommand{\ric}{\mathrm{Ric}}
\newcommand{\CS}{\mathrm{CS}}
\newcommand{\fre}{\mathfrak{e}}
\DeclareMathOperator{\tr}{tr}
\newtheoremstyle{mythm}% name of the style to be used
{}% measure of space to leave above the theorem. E.g.: 3pt
{}% measure of space to leave below the theorem. E.g.: 3pt
{\slshape}% name of font to use in the body of the theorem
{}% measure of space to indent
{\bfseries\sffamily}% name of head font
{.}% punctuation between head and body
{ }% space after theorem head; " " = normal interword space
{}% Manually specify head
\newtheoremstyle{mydef}% name of the style to be used
{}% measure of space to leave above the theorem. E.g.: 3pt
{}% measure of space to leave below the theorem. E.g.: 3pt
{}% name of font to use in the body of the theorem
{}% measure of space to indent
{\bfseries\sffamily}% name of head font
{.}% punctuation between head and body
{ }% space after theorem head; " " = normal interword space
{}% Manually specify head
\theoremstyle{mythm}
\newtheorem{thm}{Theorem}[section]
\newtheorem{prop}[thm]{Proposition}
\newtheorem{cor}[thm]{Corollary}
\newtheorem{lem}[thm]{Lemma}
\theoremstyle{mydef}
\newtheorem{mydef}[thm]{Definition}
\newtheorem{rem}[thm]{Remark}
\apptocmd{\sloppy}{\hbadness 10000\relax}{}{}
\title{Heisenberg-invariant self-dual Einstein manifolds}
\author[1]{V.\ Cort\'es}
\author[1,2]{\'A. Murcia}
\affil[1]{\normalsize Department of Mathematics\hfill \protect\\  University of Hamburg\hfill \protect\\  
Bundesstra\ss e 55, D-20146 Hamburg, Germany\hfill \protect\\  
\texttt{vicente.cortes@uni-hamburg.de}  \vspace{1em} }
\affil[2]{\normalsize Instituto de F\'isica Te\'orica UAM-CSIC\hfill \protect\\  
C/ Nicol\'as Cabrera 13-15, C.U. Cantoblanco, E-28049 Madrid, Spain\hfill \protect\\  
\texttt{angel.murcia@csic.es}
}
\date{}
\begin{document}
\maketitle

\begin{abstract}
We classify all self-dual Einstein four-manifolds invariant under a principal 
action of the three-dimensional Heisenberg group with non-degenerate orbits. The metrics are explicit and we find, in particular, that the Einstein constant can take any value. Then we study when the corresponding (Riemannian or neutral-signature) metrics are (geodesically) complete. Finally, we exhibit the solutions of non-zero Ricci-curvature as different branches of one-loop deformed universal hypermultiplets in Riemannian and neutral signature.

	\par
	\emph{Keywords: Einstein metrics, cohomogeneity one, Heisenberg group}\par
	\emph{MSC classification: 53C26.}
\end{abstract}

\clearpage
\setcounter{tocdepth}{1}
\tableofcontents

\section{Introduction}

Principal group actions on pseudo-Riemannian manifolds play a prominent role in differential geometry. Indeed, many fundamental concepts, such as principal bundles or homogeneous spaces, are based on this notion. Among them, it is particularly interesting to consider cohomogeneity one principal actions, in which the corresponding orbits are of codimension one. This allows to reduce interesting systems of partial differential equations, such as the Einstein equation, to systems of ordinary differential equations. This is extremely relevant in theoretical physics as well, since the above principle allows to solve the field equations of general relativity in many important cases, for instance, within the context of cosmological models \cite{EM}.

On the other hand, the homogeneous quaternionic K\"ahler manifolds of negative scalar curvature (except the quaternionic hyperbolic spaces) have been shown to admit  a canonical deformation to a complete quaternionic K\"ahler manifold with a cohomogeneity-one isometric action \cite{CST}. This deformation is  a particular case of what is called one-loop deformation \cite{AMTV,RSV}, which appears in the study of string theory and supergravity with one-loop corrections. In four dimensions it is found \cite{CST} that the isometry group of the deformed quaternionic K\"ahler manifold is $\mathrm{O}(2) \ltimes \mathrm{H}$, with $\mathrm{H}$ the three-dimensional Heisenberg group, and this motivated to carry out the classification of all Riemannian Einstein metrics of non-positive scalar curvature  which are invariant under the action of $\mathrm{SO}(2) \ltimes \mathrm{H}$ in $\mathbb{R}^4$ \cite{CS}. Apart of a wealth of incomplete metrics, the authors of \cite{CS} showed that the only complete manifolds in the above class are the complex hyperbolic plane (also know as universal hypermultiplet in the physics literature) and its complete one-loop deformation. 

Recall that a four-dimensional Riemannian metric is called quaternionic-K\"ahler if it is Einstein and self-dual (for appropriate choice of orientation).  The notion of self-duality is also meaningful for metrics of neutral signature\footnote{But not for Lorentzian signature, cf.\ Remark \ref{rem:lorsollight}.} and self-dual Einstein metrics of neutral signature are also called quaternionic paraK\"ahler.

In this context, the objective of this work is the classification of all self-dual pseudo-Riemannian Einstein four-manifolds which admit a principal (cohomogeneity-one) isometric action of the Heisenberg group. 
The hypotheses are, on the one hand, more general than those of \cite{CS} in that we allow indefinite metrics and assume only the symmetry group $\mathrm{H}$ rather than $\mathrm{SO}(2) \ltimes \mathrm{H}$ but, on the other hand, more specific as we restrict to self-dual metrics. 

Up to an overall sign in the metric, the manifolds considered can be decomposed as $(\mathcal{I}\times \mathrm{H}, \varepsilon \d t^2+ \chi_t)$, being $\mathcal{I} \subset \mathbb{R}$ an open interval parametrized by $t$ (that we call time), $\varepsilon=\pm 1$ and $\{\chi_t\}_{t \in \mathcal{I}}$ a family of Riemannian %{\color{red}($\varepsilon=+1$)} 
or Lorentzian 
%{\color{red}($\varepsilon=-1$)} 
metrics on $\mathrm{H}$, respectively. %{\color{red}(For the notion of self-duality to make sense, we restrict to Riemannian and {\color{red}neutral-signature} metrics.)} 
The key feature that will allow us to perform such classification is the use of a family of orthonormal or Witt frames on $\mathrm{H}$, which are interpreted as the time-evolution of an initial (orthonormal or Witt) frame on $\mathrm{H}$. 

More concretely, on the one hand we first consider the proper (i.e.\ $\mathrm{scal} \neq 0$) quaternionic (para)K\"ahler four-manifolds with a Heisenberg principal group action and, attending to the %space-time or 
causal character of the center of the Heisenberg group (for neutral-signature manifolds), we determine completely their isometry type. In the Riemannian case, apart from encountering the complex hyperbolic metric and the complete and incomplete one-loop deformed universal hypermultiplet metrics of negative scalar curvature as reported in \cite{CS}, we also find a counterpart of positive scalar curvature.

For neutral-signature metrics, we are able to identify the  solutions of negative scalar curvature as quaternionic paraK\"ahler geometries arising from the so-called temporal and Euclidean supergravity $c$-maps\footnote{We remind that the temporal (respectively, Euclidean) supergravity $c$-map is induced by the reduction of four-dimensional Minkowskian (respectively, Euclidean) $\mathcal{N}=2$ supergravity coupled to vector multiplets over a timelike (respectively, spacelike) dimension, while the one-loop deformed universal hypermultiplet  metric arises from the usual supergravity $c$-map induced by the reduction of four-dimensional $\mathcal{N}=2$ supergravity coupled to vector multiplets over a spacelike dimension \cite{CMMS1,CMMS2}.} while those with positive scalar curvature do not seem to have been previously considered.  Furthermore, we study when such metrics are complete. 

On the other hand, we also investigate (para)hyperK\"ahler four-manifolds endowed with a principal action of the Heisenberg group and provide a classification of all of them in terms of their isometry type, in similar lines to the quaternionic case. It turns out that the Ricci-flat examples are incomplete with exception of a class of flat examples of neutral signature.

The outline of the article is as follows. In Section \ref{sec:2},  we %revise some notions about the Heisenberg group and 
introduce some concepts and nomenclature which will be of use along the document. Before imposing any differential equation, 
we begin by specifying precisely the manifolds with Heisenberg symmetry and the general form of the invariant metric to be considered, see Proposition~\ref{prop:1stNF} for an 
adapted frame. Then, using part of the Einstein equation, 
we show in Proposition \ref{prop:zcausalconst} that the causal character of the Heisenberg center is time-independent and in Proposition~\ref{prop:2ndNF} that the adapted frame can be considerably simplified.

Afterwards, in Section \ref{sec:3}  we determine all quaternionic (para)K\"ahler four-manifolds which admit an isometric  principal action of the Heisenberg group  with non-degenerate orbits. In the neutral signature case, these (connected) manifolds are referred to as \emph{timelike, spacelike or lightlike} quaternionic (para)K\"ahler Heisenberg four-manifolds, depending on the causal character of the Heisenberg center. 

In the Riemannian case, as well as in the case of timelike or spacelike Heisenberg center, we reduce the problem to a system ordinary differential equations  of first order for two functions $a$ and $b$, 
see Propositions~\ref{prop:edosqk} and \ref{prop:eqsspace}. The resulting quaternionic\linebreak[3] K\"ahler manifolds prove to be conformally K\"ahler\footnote{The Hermitian structures encountered in Proposition~\ref{Weyl:prop} appear naturally from the perspective of the one-loop deformed c-map. The investigation of such structures in arbitrary dimensions is part of an ongoing project of V.C.\ joint with Arpan Saha and Danu Thung.} (see Proposition~\ref{Weyl:prop})   whereas the resulting quaternionic paraK\"ahler manifolds are conformally K\"ahler or conformally paraK\"ahler, depending on the causal character of the Heisenberg center (see Proposition~\ref{prop:spaceconfk}). 

It turns out that the ode system takes a particularly nice form when  $\varepsilon \Lambda = -6 k^2$,
where $k$ is the structure constant of the adapted frame at initial time, up to a numerical factor. We refer to this case as the stationary case, as its solutions are stationary in the sense that the logarithmic derivatives of the unknown functions are constant. We determine all maximal stationary solutions and show that they define homogeneous spaces, see Propositions~\ref{prop:statsol} and \ref{prop:statspace}. More precisely, each of these homogeneous spaces can be realized as an open orbit of a four-dimensional solvable Lie group acting by isometries on a symmetric space.  Altogether, we are able to state the following:
\begin{thm}
\label{thm:introstat}
Let $(M,g)$ be a stationary (timelike) quaternionic (para)K\"ahler Heisenberg four-manifold. Then there exists an $\mathrm{H}$-equivariant 
diffeomorphism $M\cong \mathcal{I}\times \mathrm{H}$ such that the metric takes the following form: 
\begin{equation*}
g=\varepsilon \d t^2 + \varepsilon e^{4k (t-t_0)} \fre^1_{t_0} \otimes \fre^1_{t_0}+e^{2k (t-t_0)}\left ( \fre^2_{t_0} \otimes \fre^2_{t_0}+\fre^3_{t_0} \otimes \fre^3_{t_0}  \right)\,,
\end{equation*}
where $(\fre^i_{t_0})$ denotes a left-invariant coframe of the Heisenberg group $\mathrm{H}$ whose dual frame $(\fre_i^{t_0})$ is such that $[\fre_2^{t_0},\fre_3^{t_0}]=-2k \fre_1^{t_0}$ with $k \neq 0$. 

If $\mathcal{I}=\mathbb{R}$, then $(M,g)$ is isometric to an open orbit of the solvable Iwasawa subgroup of $\mathrm{SU} (1,2)\cong \mathrm{SU}(2,1)$ on the symmetric space 
\begin{equation*} \dfrac{\mathrm{SU}\left (\dfrac{3+\varepsilon}{2},\dfrac{3-\varepsilon}{2}\right )}{\mathrm{S}\left (\mathrm{U}(1) \times \mathrm{U}\left (\dfrac{3+\varepsilon}{2},\dfrac{1-\varepsilon}{2}\right )\right )},\end{equation*} where $\mathrm{U}(p,q)$ denotes the (pseudo-)unitary group 
of the Hermitian sesquilinear form of index $q$. Moreover, when $(M,g)$ is Riemannian (resp. neutral signature) it is complete (resp. incomplete). 

Similarly, if $(M,g)$ is a stationary spacelike quaternionic paraK\"ahler Heisenberg four-manifold, then the metric takes the form:
\begin{equation*}
g=-\d t^2 +e^{-2k (t-t_0)}\left ( -\fre^1_{t_0} \otimes \fre^1_{t_0}+\fre^2_{t_0} \otimes \fre^2_{t_0}  \right)+ e^{-4k (t-t_0)} \fre^3_{t_0} \otimes \fre^3_{t_0}\,,
\end{equation*}
where now $[\fre_1^{t_0}, \fre_2^{t_0}]=-2 k \fre_3^{t_0}$.

If $\mathcal{I}=\mathbb{R}$, these solutions are isometric to an open orbit of a four-dimensional solvable subgroup (which contains a Heisenberg subgroup) of $\mathrm{SL}(3,\mathbb{R})$ on the symmetric space:
\begin{equation*}
\frac{\mathrm{SL}(3,\mathbb{R})}{\mathrm{S}(\mathrm{GL}(1,\mathbb{R})\times \mathrm{GL}(2,\mathbb{R}))}\,.
\end{equation*}
Furthermore, they all are incomplete.
\end{thm}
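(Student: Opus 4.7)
The plan is to handle the three claims in turn: (i) derive the explicit exponential form of the metric, (ii) identify the $\mathcal{I}=\mathbb{R}$ case with an open orbit of the relevant Iwasawa-type subgroup on the stated symmetric space, and (iii) settle completeness. For (i), I rely on the reduction already carried out in Propositions~\ref{prop:edosqk} and \ref{prop:eqsspace}, where the Einstein equation has been recast as a first-order ODE system for two functions $a(t), b(t)$ built out of the adapted coframe. The stationary hypothesis $\varepsilon \Lambda=-6k^2$ is precisely the one that makes the ansatz ``$a'/a, b'/b$ constant'' consistent with the ODE system. Solving the resulting algebraic relations for the two logarithmic derivatives yields, up to a shift $t\mapsto t-t_0$, the exponents $4k$ and $2k$ (respectively $-2k$ and $-4k$ in the spacelike case) displayed in the theorem. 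Substituting back into the general invariant metric gives exactly the closed form in the statement.

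For step (ii), I first extract from the explicit form of $g$ a natural one-parameter group of isometries beyond the right $\mathrm{H}$-action. Inspection shows that the flow generated by $\partial_t$ combined with the rescaling $\fre^1_{t_0}\mapsto e^{-2ks}\fre^1_{t_0}$, $\fre^{2,3}_{t_0}\mapsto e^{-ks}\fre^{2,3}_{t_0}$ is isometric, because the exponential factors exactly absorb the time shift. This produces a four-dimensional simply connected solvable Lie group $S=\mathbb{R}\ltimes\mathrm{H}$ acting by isometries, simply transitively on $\mathcal{I}\times \mathrm{H}$. A direct computation of $\mathrm{ad}$ of the new generator on $\mathfrak{h}$ identifies $\mathfrak{s}$ with the Iwasawa subalgebra $\mathfrak{a}\oplus\mathfrak{n}$ of $\mathfrak{su}(1,2)\cong \mathfrak{su}(2,1)$, respectively of $\mathfrak{sl}(3,\mathbb{R})$ in the spacelike case: the eigenvalues of $\mathrm{ad}$ are precisely the restricted roots with the correct multiplicities (a simple root with a double-multiplicity space and its double with one-dimensional root space). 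To upgrade this Lie-algebraic identification to a global isometric embedding into the claimed symmetric space, I compare the two invariant metrics on $S$: both are self-dual Einstein with the same Einstein constant and the same holonomy type (quaternionic (para)Kähler), and by the local uniqueness of such structures on a simply connected homogeneous space with given isotropy representation, the left-invariant metric induced on the Iwasawa subgroup by the symmetric metric coincides, up to normalization fixed by $k$, with $g$. Since $S$ acts simply transitively on an open orbit in the symmetric space and also on $\mathbb{R}\times\mathrm{H}$, this produces the desired $\mathrm{H}$-equivariant isometric identification with an open orbit.

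For (iii), in the Riemannian case $\varepsilon=+1$ the Iwasawa subgroup $S$ acts simply transitively on the whole complex hyperbolic plane, hence the open orbit is all of $\mathbb{CH}^2$ and $(M,g)$ is complete. In all remaining cases (neutral signature, both timelike and spacelike), completeness fails and this must be shown by producing an explicit geodesic whose maximal interval of definition is bounded. The natural candidate is an integral curve of a null left-invariant vector field on $\mathrm{H}$ whose $t$-component is driven by the exponential prefactors; the geodesic equation in the orthonormal/Witt moving frame set up in Section~\ref{sec:2} reduces to an ODE of the form $\dot{t}\sim e^{\pm 2k(t-t_0)}$ for the time-coordinate of a null geodesic, which integrates in finite affine parameter to $t\to\pm\infty$, yielding incompleteness.

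The main obstacle I anticipate is step (ii), specifically the globalisation from the Lie-algebraic match of $\mathfrak{s}$ with an Iwasawa subalgebra to an identification of the whole open orbit inside the symmetric space: one must verify that the left-invariant metric on $S$ that is induced by the symmetric-space metric is actually isometric to our $g$, not merely Einstein with the same sign of scalar curvature. This either requires a careful normalisation argument tracking the parameter $k$, or alternatively a direct computation of the full Riemann curvature of $g$ at one point showing it is parallel and agrees with that of the claimed symmetric coset, after which the Ambrose--Singer/Cartan-type uniqueness theorem closes the argument. The spacelike case proceeds in strict parallel, with $\mathrm{SL}(3,\mathbb{R})$ replacing $\mathrm{SU}(p,q)$ and the relevant restricted root system rearranged so as to match the modified exponents $-2k,-4k$ appearing in the spacelike stationary metric.
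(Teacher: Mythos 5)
Your overall route is the same as the paper's, which simply assembles Propositions~\ref{prop:statsol} and \ref{prop:statspace}: solve the reduced first-order system in the stationary regime to get the exponential profile, enlarge $\mathrm{H}$ by the one-parameter group of scalings to a four-dimensional solvable isometry group acting simply transitively, identify that group with an Iwasawa-type subgroup, and then discuss completeness. Where you genuinely diverge is in the last step for neutral signature: the paper's incompleteness argument is topological (the symmetric space $\mathrm{SU}(1,2)/\mathrm{S}(\mathrm{U}(1)\times \mathrm{U}(1,1))$ is homotopy equivalent to $S^2$, so the orbit $\mathcal{I}\times\mathrm{H}\cong\mathbb{R}^4$ is a \emph{proper} open subset of a complete homogeneous space, hence incomplete), whereas you propose exhibiting explicit null geodesics that exhaust their affine parameter in finite time. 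Your version works and is more self-contained, but it needs one more ingredient: you must check that the candidate curves really are geodesics of the four-manifold, e.g.\ by locating them inside the totally geodesic surface $\{y=z=0\}$ fixed by the isometric involution $(x,y,z)\mapsto(x,-y,-z)$, where the induced metric $-\d t^2+e^{2k(t-t_0)}\d x^2$ makes the finite-affine-parameter computation immediate. The paper also derives the spacelike case from the timelike one by the reflection $t\mapsto 2t_0-t$ (Proposition~\ref{prop:truco}) rather than rerunning the argument, but your parallel treatment is equivalent.

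Two cautions. First, your primary mechanism for globalising the Lie-algebraic match --- ``local uniqueness of self-dual Einstein structures on a simply connected homogeneous space with given isotropy'' --- is not a theorem you can invoke: self-dual Einstein metrics with a fixed Einstein constant are far from unique (this paper constructs one-parameter families of them). The fallback you name is the correct argument and is exactly what the paper does: verify $\nabla \mathrm{R}^g=0$, so the metric is locally symmetric, and compare curvature tensors with the classification of pseudo-Riemannian symmetric spaces of quaternionic (para)K\"ahler type; commit to that version. Second, ``stationary'' is defined by $\varepsilon\Lambda=-6k^2$, not by constancy of the logarithmic derivatives, so exhibiting the exponential solution is not enough: you must also argue that it is the \emph{only} solution with this value of $\varepsilon\Lambda$, which requires Proposition~\ref{prop:solsbrt} to resolve the cubic constraint for $b'$ into a genuine smooth first-order system and then ODE uniqueness with $a(t_0)=b(t_0)=1$; one must also note that only one sign of the exponents yields a self-dual (rather than anti-self-dual) Weyl tensor.
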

In the non-stationary case, we show that the above ode system can be explicitly and completely solved, see Propositions~\ref{prop:neg(time)q(para)}, \ref{prop:pos(time)q(para)}, \ref{neg_space_q_para} and \ref{pos_space_q_para}. We find that the solutions occur in one-parametric families and that the parameter can be identified with the one-loop parameter in the perturbative quantum correction of the supergravity c-map and its temporal and Euclidean versions. Setting the parameter to zero (whenever possible on the considered branch) results in one of the stationary solutions. Geometrically this corresponds to a deformation of the locally symmetric space in the class of quaternionic (para)K\"ahler manifolds. In the case of lightlike center,  we find analogously all solutions to the aforementioned system of odes and observe that they are conformally flat, see Proposition~\ref{prop:light}. In particular, we are able to prove the following results. 

\begin{thm}
Let $(M,g)$ be a (timelike) quaternionic (para)K\"ahler Heisenberg four-manifold. Then the metric takes the following form:
\begin{equation*}
g=-\frac{3}{2 \varepsilon \Lambda  \rho^2} \frac{\rho+2\gamma}{\rho+\gamma} \left ( \varepsilon\d \rho^2+\frac{\varepsilon}{k^2} \left( \frac{\rho+\gamma}{\rho+2\gamma}\right)^2 \fre_{t_0}^1\otimes  \fre_{t_0}^1+2(\rho+\gamma) \left (\fre_{t_0}^2\otimes  \fre_{t_0}^2+\fre_{t_0}^3\otimes  \fre_{t_0}^3 \right)\right)\,,
\end{equation*}
where $(\fre^i_{t_0})$ denotes a left-invariant coframe of the Heisenberg group $\mathrm{H}$ such that $[\fre_2^{t_0},\fre_3^{t_0}]=-2k \fre_1^{t_0}$ with $k \neq 0$ and where $\rho \in \mathcal{I}$ for certain open interval $\mathcal{I} \subseteq \mathbb{R}$ specified as follows. We define the real numbers
\begin{equation*}
\begin{split}
\rho_l&=-\frac{e^{-(4-2l)i \pi/3}}{2k (\varepsilon \Lambda)^{1/3}(9k+\sqrt{81k^2+ \varepsilon\Lambda})^{1/3}}+e^{(4-2l)i \pi/3} \frac{(9k+\sqrt{81k^2+ \varepsilon\Lambda})^{1/3}}{2k (\varepsilon\Lambda)^{2/3}}\,, \, \, l=1,2,3\\ 
\rho_4&=\frac{\varepsilon \Lambda }{k  (-9 k \Lambda^4+\sqrt{ \Lambda^8 (81 k^2+\varepsilon\Lambda)})^{1/3}} -k \frac{1}{2 \Lambda^2} (-9 k \Lambda^4+\sqrt{\Lambda^8(81 k^2+\varepsilon\Lambda) }  )^{1/3}\,.
\end{split}
\end{equation*}
For any element $\rho_0$ of the set 
$$ \{\rho \in \mathbb{R}\, |\,  \rho \neq0\, , \rho+\gamma >0 \, \, \text{and}\, \, (-1)^{j+1}(\rho+2\gamma)>0\}\,,$$
we denote by $I_{\rho_0}^j$ the connected component containing $\rho_0$,  
where $2\gamma=-\rho_0\left (1+\frac{\varepsilon\Lambda}{3}\rho_0 \right )$ and where $\varepsilon\Lambda \neq 0$ is the Einstein constant. Then the following holds. 
\begin{enumerate}
\item Setting $\mathcal{I}= I^1_{\rho_l}$, $l=1,2,3$, we obtain all (timelike) quaternionic (para)K\"ahler Heisenberg four-manifolds with $\varepsilon\Lambda<0$. In particular, the solution given by $\rho_1$ is defined for all $\varepsilon\Lambda < 0$ while the other two are defined for all $\varepsilon\Lambda \leq -81k^2$ . The corresponding pseudo-Riemannian manifolds arising from these last two cases, together with those stemming from the first one for $\varepsilon\Lambda > -6k^2$, are incomplete while the first for $\varepsilon=1$ and $\Lambda < 6k^2$ is complete. 
\item Setting $\mathcal{I}= I^2_{\rho_4}$, we find  all (timelike) quaternionic (para)K\"ahler Heisenberg four-manifolds with $\varepsilon\Lambda>0$. They all are incomplete. 
\end{enumerate}
%\begin{equation*}
%\rho_4=\frac{\varepsilon \Lambda }{k  (-9 k \Lambda^4+\sqrt{ \Lambda^8 (81 k^2+\varepsilon\Lambda)})^{1/3}} -k \frac{1}{2 \Lambda^2} (-9 k \Lambda^4+\sqrt{\Lambda^8(81 k^2+\varepsilon\Lambda) }  )^{1/3}\,,
%\end{equation*}
%Then:
%\begin{equation*}
%g=\frac{3}{2 \varepsilon\Lambda \rho^2} \frac{-\rho-2\gamma}{\rho+\gamma} \left ( \varepsilon\d \rho^2+\frac{\varepsilon}{k^2} \left( \frac{\rho+\gamma}{-\rho-2\gamma}\right)^2 \fre_{t_0}^1\otimes  \fre_{t_0}^1+2(\rho+\gamma) \left (\fre_{t_0}^2\otimes  \fre_{t_0}^2+\fre_{t_0}^3\otimes  \fre_{t_0}^3 \right)\right)\,,
%\end{equation*}
%with $ \rho \in I^2_{\rho_4}$ are all (timelike) quaternionic (para)K\"ahler Heisenberg four-manifolds with $\varepsilon\Lambda<0$. They all are incomplete. 
%\begin{equation*}
%g=\frac{3}{2 (-\varepsilon\Lambda) \rho^2} \frac{\rho+2\gamma}{\rho+\gamma} \left ( \varepsilon\d \rho^2+\frac{\varepsilon}{k^2} \left( \frac{\rho+\gamma}{\rho+2\gamma}\right)^2 \fre_{t_0}^1\otimes  \fre_{t_0}^1+2(\rho+\gamma) \left (\fre_{t_0}^2\otimes  \fre_{t_0}^2+\fre_{t_0}^3\otimes  \fre_{t_0}^3 \right)\right)\,,
%\end{equation*}
\label{thm:alltimeqknonstat}
\end{thm}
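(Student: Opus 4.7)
The plan is to combine the explicit integration of the ODE system of Proposition~\ref{prop:edosqk} carried out in Propositions~\ref{prop:neg(time)q(para)} and \ref{prop:pos(time)q(para)} with a single reparametrization of the time variable, and then to settle the questions of admissible intervals and completeness by elementary analysis. Starting from the explicit one-parameter family $(a(t),b(t))$, with free parameter $\gamma$, that solves the first-order system in the non-stationary regime $\varepsilon\Lambda\neq -6k^2$, I introduce a new coordinate $\rho$ by the relation $b(t)^2=\tfrac{3(\rho+2\gamma)}{-\varepsilon\Lambda\rho^2}$, which is smoothly invertible on the admissible region. Substituting back and exploiting the algebraic first integral of the ODE system, the expressions for $a^2$ and $\d t^2$ collapse to the coefficients of $\fre_{t_0}^1\otimes\fre_{t_0}^1$ and $\d\rho^2$ displayed in the statement; the relation $2\gamma=-\rho_0\bigl(1+\tfrac{\varepsilon\Lambda}{3}\rho_0\bigr)$ then follows from matching $\rho(t_0)=\rho_0$ to the initial conditions.

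The metric in $\rho$-coordinates is non-degenerate away from the three loci $\rho=0$, $\rho=-\gamma$ and $\rho=-2\gamma$, and has the correct (Riemannian or neutral) signature exactly on the open set $\{\rho+\gamma>0\}\cap\{(-1)^{j+1}(\rho+2\gamma)>0\}$; a sign check of the global conformal factor forces the branch $j=1$ when $\varepsilon\Lambda<0$ and $j=2$ when $\varepsilon\Lambda>0$, so $I^j_{\rho_0}$ is by definition the connected component of this open set that contains $\rho_0$. The distinguished values $\rho_l$ are chosen as initial data whose associated family exhausts the full range of $\varepsilon\Lambda$ in its sign regime, and a Cardano computation identifies them as the real roots of the cubic $4k^2(\varepsilon\Lambda)^2\rho^3+3\varepsilon\Lambda\,\rho-9=0$, whose discriminant is proportional to $-(\varepsilon\Lambda+81k^2)$. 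This yields the dichotomy asserted in the statement: three real roots $\rho_1,\rho_2,\rho_3$ precisely for $\varepsilon\Lambda\leq -81k^2$, a single real root $\rho_1$ continuing for $-81k^2<\varepsilon\Lambda<0$, and a single real root $\rho_4$ for $\varepsilon\Lambda>0$.

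For completeness, since $\mathrm{H}$ acts transitively on the level sets of $\rho$ it suffices to test the divergence of the proper-distance integral $\int\sqrt{|g_{\rho\rho}|}\,\d\rho$ at each endpoint of $I^j_{\rho_l}$. From the explicit form one reads off that $|g_{\rho\rho}|$ is comparable to $\rho^{-2}$ near $\rho=0$, to $|\rho+\gamma|^{-1}$ near $\rho=-\gamma$, to $|\rho+2\gamma|$ near $\rho=-2\gamma$ and to $|\rho|^{-1}$ as $\rho\to\pm\infty$, so that only the boundary points $\rho=0$ and $\rho=\pm\infty$ are at infinite distance while $\rho=-\gamma$ and $\rho=-2\gamma$ are always reached in finite time. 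A case-by-case check of which of these points lies in the closure of $I^j_{\rho_l}$ as $\varepsilon\Lambda$ varies then reproduces exactly the completeness dichotomy of the statement, the unique complete case recovering the one-loop deformed $\CH^2$ metric of~\cite{CS}.

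The main obstacle will be the algebraic bookkeeping in the middle step: verifying that the three complex Cardano expressions for $\rho_l$ really select three real values of $\rho_0$ lying in three distinct admissible components, and tracking how those components merge, disappear or reconnect as $\varepsilon\Lambda$ crosses the critical values $-81k^2$ and $-6k^2$, where either roots coalesce or the initial value crosses the stationary one. Once this identification is under control, the geometric content of the remaining steps reduces to direct substitutions and a comparison of improper integrals.
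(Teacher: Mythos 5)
Your overall route is the same as the paper's: Theorem \ref{thm:alltimeqknonstat} is obtained there by collecting Propositions \ref{prop:inivalues}, \ref{prop:neg(time)q(para)}, \ref{prop:inivaluepos} and \ref{prop:pos(time)q(para)} and rewriting the metric in the coordinate $\rho$ defined by \eqref{eq:defrho} and \eqref{eq:defrhopos}. Your change of variables, the cubic $4k^2(\varepsilon\Lambda)^2\rho^3+3\varepsilon\Lambda\,\rho-9=0$ with discriminant proportional to $-(\varepsilon\Lambda+81k^2)$, and the identification of the admissible components all agree with this (modulo the small slip that the relation defining $\rho$ should read $b(t)^{-2}=\tfrac{3(\rho+2\gamma)}{-\varepsilon\Lambda\rho^2}$, since the coefficient of $\fre_{t_0}^2\otimes\fre_{t_0}^2$ in \eqref{eq:metgen} is $b^{-2}$, not $b^{2}$). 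You also correctly flag that one must check a posteriori which real Cardano roots satisfy the unsquared equations $A_s=B_s=1$; that bookkeeping is exactly Proposition \ref{prop:inivalues}, so importing Propositions \ref{prop:neg(time)q(para)} and \ref{prop:pos(time)q(para)} covers both the exhaustiveness of the list and this sign analysis.

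The genuine gap is your completeness criterion. Convergence of $\int\sqrt{|g_{\rho\rho}|}\,\d\rho$ at an endpoint does prove \emph{in}completeness (the normal geodesic leaves $M=\mathcal{I}\times\mathrm{H}$ in finite length), and this is how the paper argues at $\rho=-\gamma$ and $\rho=-2\gamma$. But divergence of that integral at both endpoints does \emph{not} suffice for completeness: one must control all geodesics, not only the normal ones, and for an indefinite metric there is no bound of the form $g\geq \d t^2$ forcing a finite-length inextendible geodesic to approach an endpoint of $\mathcal{I}$. Concretely, for the branch $l=1$ with $\varepsilon\Lambda<-6k^2$ one has $\gamma>0$, $\rho_0>0$ and $I^1_{\rho_1}=(0,\infty)$, so both endpoints are at infinite $\rho$-distance; your criterion would then declare this branch complete for $\varepsilon=-1$ as well, whereas the theorem restricts the completeness claim to $\varepsilon=1$, and Remark \ref{rem:incomplequizas} gives strong numerical evidence that the neutral-signature case is incomplete precisely because non-normal geodesics run off in finite affine parameter. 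The paper avoids this trap by deducing the Riemannian completeness from the known criterion for one-loop deformed universal hypermultiplet metrics in \cite{ACDM} (complete iff $\gamma>0$ and $\rho_0>0$) rather than from the length of the normal geodesic. To repair your argument you must either restrict the divergence test to the Riemannian case and justify it there (e.g.\ via $g\geq \d t^2$ together with homogeneity and completeness of the orbits), or cite \cite{ACDM} as the paper does; as stated, the criterion proves too much.
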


\begin{thm}
There exists a one-to-one correspondence between spacelike and timelike quaternionic paraK\"ahler Heisenberg four-manifolds. Any spacelike quaternionic paraK\"ahler Heisenberg four-manifold is isometric to:
\begin{equation*}
g=\frac{3}{2 \Lambda \rho^2} \frac{\rho+2\gamma}{\rho+\gamma} \left (-\d \rho^2+2(\rho+\gamma)\left (-\fre_{t_0}^1\otimes  \fre_{t_0}^1+\fre_{t_0}^2\otimes  \fre_{t_0}^2 \right)+ \left( \frac{\rho+\gamma}{k(\rho+2\gamma)}\right)^2 \fre_{t_0}^3\otimes  \fre_{t_0}^3\right)\,,
\end{equation*}
where $\rho$ is defined in the appropriate intervals $I^1_{\rho_l}$, $l=1,2,3$ or $I^2_{\rho_4}$, like in the timelike case, and where $\Lambda$ is the Einstein constant. 
Here $(\fre^i_{t_0})$ denotes a left-invariant coframe of  $\mathrm{H}$  such that $[\fre_1^{t_0},\fre_2^{t_0}]=-2k \fre_3^{t_0}$, $k \neq 0$. 
The (in)completeness properties are the same as those of their timelike analogues.
%Analogously, all spacelike quaternionic paraK\"ahler Heisenberg four-manifolds $(M,g)$ can be obtained from the timelike counterparts by exchanging the coefficients in the metric in front of $\fre_{t_0}^1\otimes \fre_{t_0}^1$ and  $\fre_{t_0}^3\otimes \fre_{t_0}^3$ ( preserving the causal character of the coframe) and setting the non-vanishing Lie bracket in the spacelike case to be $[\fre_1^{t_0}, \fre_2^{t_0}]=-2k \fre_3^{t_0}$. They inherit the same properties as their timelike analogues.
\label{thm:alllspaceqknonstat}
\end{thm}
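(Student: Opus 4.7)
The plan is to deduce this theorem as a direct corollary of the spacelike analogues of the reduction and integration performed in the timelike case, together with the bijection suggested by comparing the two normal forms. Concretely, I would invoke Propositions~\ref{neg_space_q_para} and \ref{pos_space_q_para}, which (like their timelike counterparts Propositions~\ref{prop:neg(time)q(para)} and \ref{prop:pos(time)q(para)}) reduce the Einstein condition in the spacelike setting to the same first-order ODE system for the two functions $a$ and $b$ as in Proposition~\ref{prop:edosqk}, modulo a prescribed change of signs dictated by the causal character of the Heisenberg center. My first step is therefore to read off from Proposition~\ref{prop:eqsspace} that after the substitution $a\leftrightarrow b$ (so that the ``special'' direction is now the center of $\mathrm{H}$) and a controlled flip of $\varepsilon$ in the metric coefficients, the system becomes formally identical to the timelike one.

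Next, I would set up the explicit bijection between solution families. Given a spacelike solution, the associated pair $(a,b)$ satisfies an ODE system equivalent (after the above substitution) to that of Theorem~\ref{thm:alltimeqknonstat}. Hence its maximal solutions are parametrised by the same branches $I^1_{\rho_l}$ ($l=1,2,3$) and $I^2_{\rho_4}$ with identical definitions of $\rho_l$ and $\gamma$ in terms of the structure constant $k$ and the Einstein constant $\Lambda$. Substituting these solutions into the spacelike metric ansatz provided by Proposition~\ref{prop:2ndNF} (in the spacelike-center variant) and performing the same radial reparametrisation $t\mapsto \rho$ used in the timelike case produces precisely the displayed metric. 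At this point one obtains the one-to-one correspondence as a consequence of the matching parametrisations of the ODE solutions.

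For completeness, the key observation is that geodesic (in)completeness is controlled by the behavior of $\rho$ at the endpoints of $\mathcal{I}$ and by the same conformal factor $\tfrac{3}{2\Lambda\rho^2}\tfrac{\rho+2\gamma}{\rho+\gamma}$ appearing in the timelike metric; the differences between the two cases are confined to the signature of the induced metric on the Heisenberg orbits, which does not affect the divergence of arc-length integrals along radial curves nor the causal completeness criteria established in the proof of Theorem~\ref{thm:alltimeqknonstat}. Hence the (in)completeness statements transfer verbatim from the timelike case.

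The main obstacle is bookkeeping the signs. One must verify, case by case in the four intervals $I^1_{\rho_l}$ and $I^2_{\rho_4}$, that the signature $(2,2)$ is indeed produced by the ansatz and that the Einstein constant is recovered with the correct sign. A minor subtlety is that the would-be ``stationary value'' $\varepsilon\Lambda=-6k^2$ of the timelike case has a corresponding (spacelike) analogue; I would treat it via the spacelike stationary result in Proposition~\ref{prop:statspace} to confirm that the excluded stationary branches match up correctly, so that the non-stationary families in both signatures are in genuine bijection.
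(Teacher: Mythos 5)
Your overall strategy---transferring the timelike classification to the spacelike case via a correspondence between the two ODE systems, and then letting the $\rho$-parametrisation and the completeness statements carry over---is exactly the paper's strategy. However, the concrete mechanism you propose for that correspondence is wrong, and this is a genuine gap. Swapping $a\leftrightarrow b$ together with a flip of $\varepsilon$ does \emph{not} turn the spacelike system \eqref{eq:edoskspace} into the timelike system \eqref{eq:edosqk}: comparing the two, the first equations already agree verbatim once $\varepsilon=-1$ is inserted, and the only discrepancy sits in the second (algebraic) equation, where $kb^3-ab'$ and $-3kb^3+ab'$ in \eqref{eq:edosqk} are replaced by $kb^3+ab'$ and $3kb^3+ab'$ in \eqref{eq:edoskspace}. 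An $a\leftrightarrow b$ substitution scrambles both equations (the system is not symmetric in $a$ and $b$), and an $\varepsilon$-flip changes the wrong signs. The discrepancy is precisely a sign flip of $b'$ (and $a'$), i.e.\ it is compensated by time reversal: this is the content of Proposition~\ref{prop:truco}, which shows that if $(a(t),b(t))$ solves \eqref{eq:edosqk} with $\varepsilon=-1$ then $(a(2t_0-t),b(2t_0-t))$ solves \eqref{eq:edoskspace} with the same $\Lambda$, preserving the initial conditions at $t_0$. Without identifying this map you cannot legitimately claim that the spacelike maximal solutions are parametrised by the same branches $I^1_{\rho_l}$, $I^2_{\rho_4}$ with the same $\rho_l$ and $\gamma$, nor that (in)completeness transfers (in the paper it transfers trivially because $t\mapsto 2t_0-t$ is a diffeomorphic reparametrisation of the same geometric data).

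There is also a circularity to flag: you ``invoke'' Propositions~\ref{neg_space_q_para} and \ref{pos_space_q_para}, but in the paper these are themselves proved by applying Proposition~\ref{prop:truco} to the timelike families \eqref{eq:uhm}--\eqref{eq:uhm3} and \eqref{eq:uhmpos}; they are not independent inputs that could substitute for the missing time-reversal argument. Once Proposition~\ref{prop:truco} is in place, the rest of your outline (substituting into the spacelike ansatz of Proposition~\ref{prop:2ndNF}, passing to the coordinate $\rho$, and checking the stationary branch against Proposition~\ref{prop:statspace}) matches the paper and would close the proof.
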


\begin{thm}
All lightlike quaternionic paraK\"ahler Heisenberg four-manifolds $(M,g)$ are given by:
\begin{equation*}
g=-\d t^2+ e^{-\frac{2 \sqrt{\Lambda}}{\sqrt{3}}(t-t_0)} \fre_{t_0}^u \odot \fre_{t_0}^v +e^{-\frac{2 \sqrt{\Lambda}}{\sqrt{3}}(t-t_0)} \fre_{t_0}^3 \otimes \fre_{t_0}^3\,,
\end{equation*}
where $t \in \mathbb{R}$ and $\Lambda >0$ is the Einstein constant.  Here $(\fre_{t_0}^i)$  denotes a left-invariant (Witt) coframe on $\mathrm{H}$ such that $[\fre_v^{t_0}, \fre^{t_0}_3]=-2k \fre_u^{t_0}$, $k \neq 0$. The above manifolds are all conformally flat and incomplete.
\label{thm:alllightqknonstat}
\end{thm}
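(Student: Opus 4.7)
The plan is to specialize the normal form of Proposition~\ref{prop:2ndNF} to the lightlike-center case, reduce the self-dual Einstein condition to an ODE system, solve it in closed form, and then verify conformal flatness and incompleteness.

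First I would choose a Witt coframe $(\fre^u_{t_0}, \fre^v_{t_0}, \fre^3_{t_0})$ dual to a basis $(\fre_u^{t_0}, \fre_v^{t_0}, \fre_3^{t_0})$ of $\mathrm{H}$ satisfying $[\fre_v^{t_0},\fre_3^{t_0}] = -2k\,\fre_u^{t_0}$, with $\fre_u^{t_0}$ spanning the lightlike center of $\mathrm{H}$. By Proposition~\ref{prop:2ndNF}, and using Proposition~\ref{prop:zcausalconst} to guarantee that the lightlike character of the center is preserved in time, the invariant metric then takes the adapted form
\begin{equation*}
g = -\d t^2 + a(t)^2\, \fre^u_{t_0}\odot \fre^v_{t_0} + b(t)^2\, \fre^3_{t_0}\otimes \fre^3_{t_0}
\end{equation*}
for positive smooth functions $a(t)$ and $b(t)$.

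Next, I would impose the full self-dual Einstein condition on this ansatz. Using the time-evolved Witt frame, I compute the Levi-Civita connection forms, the curvature two-forms, and thence the Ricci tensor together with the Weyl tensor. The equations $\ric = \Lambda\, g$ combined with vanishing of the half of the Weyl tensor of opposite duality should collapse to a coupled first-order ODE system for $a$ and $b$, with algebraic coupling to the structure constant $k$ and the Einstein constant $\Lambda$. I expect certain combinations of these equations to force $a=b$ and $\dot a/a = -\sqrt{\Lambda/3}$, with $\Lambda>0$ arising as a consistency requirement. Integration then yields $a(t)^2=b(t)^2=e^{-2\sqrt{\Lambda/3}(t-t_0)}$, recovering the formula in the statement; since this expression is positive and smooth on all of $\mathbb{R}$, the maximal interval is $\mathcal{I}=\mathbb{R}$.

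To verify conformal flatness, I would introduce the coordinate $\tau=\sqrt{3/\Lambda}\,e^{\sqrt{\Lambda/3}(t-t_0)}$, so that $\d t = \d\tau/(\sqrt{\Lambda/3}\,\tau)$ and $e^{-2\sqrt{\Lambda/3}(t-t_0)}=(\Lambda/3)\tau^2$. In the new coordinates $g$ should display as an explicit conformal rescaling of a flat neutral-signature metric on $\mathbb{R}_{>0}\times \mathrm{H}$, making conformal flatness manifest; alternatively, a direct computation of the anti-self-dual Weyl tensor in the evolved Witt frame is expected to vanish. For incompleteness, I would use the Killing vector field $\fre_u^{t_0}$: a null geodesic with non-zero $\fre_u^{t_0}$-momentum obeys a conservation law whose integration reduces the affine parameter to $\int e^{2\sqrt{\Lambda/3}(t-t_0)}\,\d t$, which converges as $t\to-\infty$, producing an inextendible geodesic in finite affine time.

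The main obstacle I anticipate is the curvature computation in the asymmetric Witt frame forced by the lightlike direction, and in particular identifying the combinations of equations that pin down $a$ and $b$ uniquely and rule out spurious branches; once these combinations have been organized, the remaining verifications of conformal flatness and incompleteness are routine computations in the explicit coordinates above.
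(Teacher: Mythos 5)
There is a genuine gap at the very first step. You invoke Proposition~\ref{prop:2ndNF} to reduce to the diagonal two-function ansatz $g=-\d t^2+a(t)^2\,\fre^u_{t_0}\odot\fre^v_{t_0}+b(t)^2\,\fre^3_{t_0}\otimes\fre^3_{t_0}$, but in the lightlike case that proposition only brings $U^t_W$ to the form \eqref{eq:anslightsim}, which still contains two further functions $f(t)$ and $p(t)$ in off-diagonal positions (contributing $\fre^v_{t_0}\otimes\fre^v_{t_0}$ and $\fre^v_{t_0}\odot\fre^3_{t_0}$ terms to the metric). These are \emph{not} removed by the Einstein equations alone: in the paper's proof the Einstein condition merely yields second-order ODEs for $f$ and $p$ (Proposition~\ref{prop:lightcase}), and it is only specific components of the anti-self-dual Weyl tensor, namely $\mathcal{W}(\partial_t,\fre_3^t,\fre_v^t,\fre_3^t)$ and $\mathcal{W}(\fre_v^t,\partial_t,\fre_v^t,\partial_t)$, evaluated after substituting the Einstein relations, that force $p\equiv 0$ and then $f\equiv 0$ (using $b'\neq 0$, which itself follows from $\Lambda\neq 0$ via $\Lambda=\tfrac34(b'/b)^2$). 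By assuming $f=p=0$ from the outset you prove only that the stated metric \emph{is} a lightlike quaternionic paraK\"ahler Heisenberg four-manifold, not the classification statement that these are \emph{all} of them. The remainder of your plan (solving for $a,b$, the conformal coordinate change, and the observation $\mathcal{I}=\mathbb{R}$) does match the paper once the reduction is justified.

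A secondary point: your incompleteness argument should be checked for direction. For the metric $g=-\d t^2+e^{-\alpha(t-t_0)}(\fre^u_{t_0}\odot\fre^v_{t_0}+\fre^3_{t_0}\otimes\fre^3_{t_0})$ with $\alpha=2\sqrt{\Lambda/3}>0$, a null geodesic with non-zero conserved momenta along the center and along $\fre_v$ satisfies $\dot t^2\propto e^{+\alpha(t-t_0)}$, so the affine parameter behaves like $\int e^{-\alpha(t-t_0)/2}\,\d t$, which converges as $t\to+\infty$ (where the conformal factor collapses); this is consistent with the paper's explicit geodesic, which reaches $t=+\infty$ at finite affine parameter. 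Your claimed integral $\int e^{+\alpha(t-t_0)}\,\d t$ converging as $t\to-\infty$ does not appear to come from the stated conservation law, so the computation needs to be carried out rather than anticipated.
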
 

Finally,  in Section \ref{sec:4} we classify all (para)hyperK\"ahler four-manifolds with an isometric principal action of the Heisenberg group with non-degenerate orbits.  We obtain the following classification result:
\begin{thm}
All (timelike) (para)hyperKähler Heisenberg four-manifolds are incomplete and isometric to:
\begin{equation*}
g=\varepsilon \d t^2+\frac{\varepsilon}{(1+3k (t-t_0))^{2/3}} \fre_{t_0}^1 \otimes \fre_{t_0}^1+ (1+3k (t-t_0))^{2/3}(\fre_{t_0}^2 \otimes \fre_{t_0}^2+\fre_{t_0}^3 \otimes \fre_{t_0}^3)\, , 
\end{equation*}
where $(\fre^i_{t_0})$ is a left-invariant coframe of  $\mathrm{H}$ such that $[\fre_2^{t_0},\fre_3^{t_0}]=-2k \fre_1^{t_0}$ for $k\neq 0$.  

All spacelike parahyperKähler Heisenberg four-manifolds are incomplete and isometric to:
\begin{equation*}
g=- \d t^2+ (1-3k (t-t_0))^{2/3}(-\fre_{t_0}^1 \otimes \fre_{t_0}^1+\fre_{t_0}^2 \otimes \fre_{t_0}^2)+\frac{1}{(1-3k (t-t_0))^{2/3}} \fre_{t_0}^3 \otimes \fre_{t_0}^3\, , 
\end{equation*}
where now $[\fre_1^{t_0},\fre_2^{t_0}]=-2k \fre_3^{t_0}$. 

Finally, all lightlike parahyperKähler Heisenberg four-manifolds are isometric to flat space $(\mathbb{R}^4,\eta)$.
\label{thm:allhk}
\end{thm}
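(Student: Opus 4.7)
The approach is to specialize to the Ricci-flat situation the ODE classification of self-dual Einstein Heisenberg four-manifolds that already underlies Theorems~\ref{thm:alltimeqknonstat}--\ref{thm:alllightqknonstat}, since by definition a (para)hyperK\"ahler manifold is precisely a self-dual Ricci-flat one. The whole point is that setting $\Lambda=0$ in the systems of ordinary differential equations already derived should make them integrable in closed form by elementary means.

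For the timelike case I would start from the adapted-frame normal form provided by Proposition~\ref{prop:2ndNF} and plug $\Lambda=0$ into the first-order system of Proposition~\ref{prop:edosqk} for the two metric coefficients $a(t)$ and $b(t)$. The resulting autonomous system decouples either by dividing one equation by the other (producing an algebraic relation between $a$ and $b$) or by a direct quadrature, and after absorbing the single remaining integration constant into a shift of $t_0$ one is left with a one-parameter family (parametrized by the structure constant $k$) that assembles into the stated metric
\begin{equation*}
g=\varepsilon\,\d t^2+\frac{\varepsilon}{(1+3k(t-t_0))^{2/3}}\,\fre_{t_0}^1\otimes\fre_{t_0}^1+(1+3k(t-t_0))^{2/3}\bigl(\fre_{t_0}^2\otimes\fre_{t_0}^2+\fre_{t_0}^3\otimes\fre_{t_0}^3\bigr).
\end{equation*}
The spacelike case is handled identically starting from Proposition~\ref{prop:eqsspace} with $\Lambda=0$ and produces the stated analogue.

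For the lightlike case I would invoke Proposition~\ref{prop:light}, according to which every lightlike quaternionic paraK\"ahler Heisenberg four-manifold is conformally flat. Conformal flatness together with the Ricci-flatness built into the parahyperK\"ahler assumption forces both the Weyl and the Schouten tensor to vanish, hence also the full Riemann tensor; being simply connected and diffeomorphic to $\R\times\mathrm{H}\cong\R^4$, the manifold is then globally isometric to neutral-signature flat space $(\R^4,\eta)$, which is in particular geodesically complete and explains the absence of an incompleteness statement in that branch.

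Incompleteness in the timelike and spacelike branches follows from the explicit form of the metric: the factor $1\pm 3k(t-t_0)$ vanishes at the finite value $t=t_0\mp 1/(3k)$, and a short computation shows that scalar curvature invariants blow up there, producing a genuine curvature singularity reached in finite affine parameter along the unit-speed $t$-lines. The step I expect to require the most bookkeeping is the explicit integration in the timelike and spacelike cases, in particular verifying that the specific exponent $2/3$ comes out without spurious branches; this should be settled by working directly from the $\Lambda=0$ reduction of the ODE systems in Propositions~\ref{prop:edosqk} and~\ref{prop:eqsspace} and exploiting the freedom to translate $t_0$.
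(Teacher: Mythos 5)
Your route is genuinely different from the paper's, and it has gaps. The paper does not specialize the quaternionic (para)K\"ahler ODEs to $\Lambda=0$; it first proves (Lemma \ref{lemma:hkres}) by a representation-theoretic argument that one of the K\"ahler forms can be chosen Heisenberg-invariant, then integrates $\nabla\omega=0$ directly for that form, and finally exhibits the full triple of closed self-dual forms for the resulting metric. The main gap in your plan is at its first step: the systems in Propositions \ref{prop:edosqk} and \ref{prop:eqsspace} were derived under the standing assumption $\Lambda\neq 0$, and that hypothesis is used essentially to discard branches. For instance, $\mathcal{W}^g(\fre_1^t,\fre_2^t,\fre_1^t,\fre_3^t)=0$ forces either $kbc+a'=0$ or $-hc'+ch'=0$, and the first branch is eliminated only because it forces $\Lambda=0$; the conclusions $h=0$ and $b=c$ are likewise obtained by discarding factors whose vanishing would imply $\Lambda=0$. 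When $\Lambda=0$ those discarded branches are precisely the ones you would have to analyse, so ``setting $\Lambda=0$ in the final system'' does not classify all Ricci-flat half-conformally-flat Heisenberg metrics. There is also an orientation subtlety you would need to address: the (para)hyperK\"ahler metrics are \emph{anti}-self-dual for the orientation fixed in Section \ref{sec:3}, and indeed the actual solution satisfies $ab'=-kb^3$ while the $\Lambda=0$ reduction of \eqref{eq:edosqk} gives $ab'=+kb^3$; the two are related by $t\mapsto 2t_0-t$, but this has to be noticed and justified rather than assumed.

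Two further steps fail as written. In the lightlike case you cannot invoke Proposition \ref{prop:light}: it classifies lightlike quaternionic paraK\"ahler manifolds, for which $\Lambda>0$ is forced (equation \eqref{eq:lambdalight} reads $\Lambda=\tfrac34(b'/b)^2$), and the flat parahyperK\"ahler solution has $f=-2k(t-t_0)\neq 0$, which is not of the form obtained there; conformal flatness in the Ricci-flat lightlike case must be derived independently (the paper instead integrates $\nabla\omega_u=0$ and finds the metric is flat outright). Finally, your incompleteness argument is wrong in substance: by Remark \ref{conf:Rem} the timelike solution is conformal to a homogeneous metric $\hat{g}$ with conformal factor $e^{-6k\tilde{t}}$, so $|\mathrm{W}^g|_g^2=(1+3k(t-t_0))^4\,|\mathrm{W}^{\hat{g}}|^2_{\hat{g}}$ tends to $0$, not to infinity, as $t\to t_0-(3k)^{-1}$; no curvature invariant blows up at the degenerate end. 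Incompleteness should instead be concluded as the paper does: the maximal interval of the ODE solution is $(t_0-(3k)^{-1},+\infty)$, and the unit-speed normal geodesic $t\mapsto(t,e)$ leaves every compact subset of $M$ in finite length.
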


{\bfseries Acknowledgements}

This work was supported by the German Science Foundation (DFG) under Germany's Excellence Strategy  --  EXC 2121 ``Quantum Universe'' -- 390833306. \'AM acknowledges additional support from the Deutscher Akademischer Austauschdienst (DAAD) through the Short-Term Research Grant No. 91791300 and from the Spanish FPU Grant No. FPU17/04964. 

We thank Carlos Shahbazi for useful comments,  Maciej Dunajski for discussion of the conformal rescaling 
in Remark \ref{conf:Rem} and Vestislav Apostolov and Claude LeBrun for pointing out the fact that a self-dual Riemannian Einstein metric
admits a local conformal rescaling to a K\"ahler metric if its Weyl tensor has a double eigenvalue. V.C.\ would also like to thank
Arpan Saha and Danu Thung for discussions about Hermitian structures on one-loop deformed c-map spaces in the context 
of a collaboration extending some of the results of this paper to higher dimensions.

\noindent 
Most of the computations presented in the document have been carried out with the help of \emph{Mathematica} 12.3, using the license of the University of Hamburg. Please find in the References section \cite{M} a link to download the \emph{Mathematica} file we have used.

%\clearpage

%\section{Riemannian and Lorentzian metrics with Heisenberg symmetry}
%In this section we describe the class of metrics for which we will study the Einstein equation. 
%\subsection{The Heisenberg group}
%Recall that the Heisenberg group $H$ is the unique simply connected nilpotent Lie group of dimension $3$, up to isomorphism. We can always choose a left-invariant basis $\{e_1,e_2,e_3\}$ whose unique non-vanishing Lie bracket is $[e_2,e_3]=-2 e_2$. 

\section{The Heisenberg group and Heisenberg four-manifolds}

\label{sec:2}

In this section we  revise basic features concerning left-invariant pseudo-Riemannian metrics on the Heisenberg group,   introduce the concept of 
a Heisenberg four-manifold and prove some preliminary results about Einstein Heisenberg manifolds.

\subsection{Heisenberg group}
Recall that the three-dimensional Heisenberg group $\mathrm{H}$ is the unique, up to isomorphism, connected and simply connected non-abelian nilpotent three-dimensional Lie group. Its Lie algebra $\mathfrak h$ is called the Heisenberg algebra. As for any Lie group, there is a natural bijection between left-invariant pseudo-Riemannian metrics on $\mathrm{H}$ and pseudo-Euclidean scalar products on $\mathfrak{h}$.
We will thus often refer to such a scalar product $\chi$ as a pseudo-Riemannian metric on $\mathfrak h$ and to 
the pair $(\mathfrak h, \chi)$ as a \emph{pseudo-Riemannian Heisenberg algebra.}

Let  $\chi$ be a pseudo-Riemannian metric on $\mathfrak{h}$. Then there always exists an orthonormal or a Witt basis\footnote{If $(V,\chi)$ is a three-dimensional Lorentzian vector space, we define a Witt basis $\{e_u,e_v,e_3\}$ as one which satisfies $\chi(e_u,e_v)=\chi(e_3,e_3)=1$ and $\chi(e_u,e_u)=\chi(e_v,e_v)=\chi(e_u,e_3)=\chi(e_v,e_3)=0$.} $v_1,v_2,v_3 \in \mathfrak{h}$  such that the Lie brackets are given by:
\begin{equation}\label{basis:eq}
[v_1,v_2]=0\, , \quad [v_1,v_3]=0\, , \quad [v_2,v_3]=-2 k v_1\,, \quad k \in \mathbb{R}^{>0}\,.
\end{equation}
Note that $k\neq 0$ cannot be absorbed into a redefinition of the basis $\{v_1,v_2, v_3\}$. (However, it can be assumed positive, since 
the sign can be always switched by multiplying the vectors of the basis by minus one.)
%this can always be done through a possibly orientation-reversing isometry respecting the group isomorphism type. 
We observe that the center of the Heisenberg algebra, to which we will refer as the Heisenberg center, is spanned by $v_1$, which can be timelike, spacelike or lightlike.
Given a pseudo-Riemannian Heisenberg algebra $(\mathfrak{h},\chi)$, the isometry type of the corresponding 
pseudo-Riemannian metric on $\mathrm H$ is uniquely fixed after the specification of an orthonormal or Witt basis $\{v_1,v_2,v_3\}$ satisfying (\ref{basis:eq}).

 The three-dimensional Heisenberg group can be realized as $\mathbb{R}^3$ together with the following product:
\begin{equation}
(x,y,z) \cdot (a,b,c)=(a+x,b+y,c+z+ya-xb)\, , \quad (x,y,z), (a,b,c) \in \mathbb{R}^3\,.
\label{eq:r3realization}
\end{equation}
From \eqref{eq:r3realization} one can readily get   %(with respect to the Heisenberg group action) 
a basis\footnote{Not necessarily orthonormal.} 
$\{w_1,w_2,w_3\}$ of $\mathfrak h$, given by the left-invariant vector fields
\begin{equation}
w_1=\p_z \,, \quad w_2=\p_x+k y \p_z \, ,\quad w_3=\p_y-k x \p_z\,,
\label{eq:coordh}
\end{equation}
where $(x,y,z)$ are standard coordinates on $\mathbb{R}^3$.
In particular, note that the only non-vanishing Lie bracket of these vectors is that of $[w_2,w_3]=-2 k w_1$. The dual basis of one-forms $\{w^1,w^2,w^3\}$ is given by:
\begin{equation}
w^1=\d z +k x \d y-k y\d x\,, \quad w^2=\d x \, ,\quad w^3=\d y \,. 
\label{eq:dualbasisheis}
\end{equation}

\subsection{Heisenberg four-manifolds}

%Let $(M,g)$ be a four-dimensional {\color{red}pseudo-Riemannian} manifold. 
\begin{mydef}
A four-dimensional pseudo-Riemannian manifold $(M,g)$ is said to be a \emph{Heisenberg four-manifold} if it is foliated by the orbits of a principal and isometric action of the three-dimensional Heisenberg group. 
\end{mydef}

%\begin{mydef}
%Let $(M,g)$ be a four-manifold endowed with a non-degenerate metric $g \in T^*(M) \odot T^*(M)$ of signature $(\varepsilon_1,\varepsilon_2,1,1)$, with $\varepsilon_2 \geq \varepsilon_1$ and $\varepsilon_2^2=\varepsilon_1^2=1$. It is said to be globally hyperbolic if 
%\begin{equation}
%(M,g)=(\mathbb{R} \times \Sigma, \varepsilon_1 \beta_t^2 \d t^2+g_t)\,,
%\end{equation}
%where $g_t$ is a $t$-parameter family of non-degenerate metrics on $\Sigma$ of signature $%(\varepsilon_2,1,1)$.
%\end{mydef}
%From the previous Definition, a four-dimensional (pseudo-)Riemannian manifold is Heisenberg if it admits a principal and isometric group action of cohomogeneity one given by the (three-dimensional) Heisenberg group $\mathrm{H}$.

\noindent 
 Note that a Heisenberg four-manifold $(M,g)$ admits an  $\mathrm{H}$-equivariant diffeomorphism identifying $M$ with $\mathcal{I}\times \mathrm{H}$, where  $\mathcal{I} \subset \mathbb{R}$ is either an open interval or a circle. Replacing $M$ by its universal covering, if necessary, we can assume the former.

Within the class of Heisenberg four-manifolds $(M,g)$, we shall restrict ourselves to those for which the restriction of $g$ to the leaves is non-degenerate. In such a case, the metric $g$ can be written in the form:
\begin{equation}\label{eq:methinv}g = \varepsilon dt^2 + \chi_t, \end{equation}
where $\varepsilon=\pm 1$  and $\{\chi_t\}_{t \in \mathcal{I}}$ is a family of left-invariant metrics on $\mathrm{H}$ parametrized by the time coordinate $t$. The different $\mathrm{H}$-orbits are identified by means of the normal geodesic flow\footnote{We have also properly reparametrized $t$ in order to correspond to the arc length parameter along a normal geodesic.} generated by $\partial_t$, such  that $\{(H,\chi_t)\}_{t \in \mathcal{I}}$  defines a family of pseudo-Riemannian Heisenberg groups. Up to orthogonal transformations, we can associate it to a family $\{ (\fre^t_i)\}_{t \in \mathcal{I}}$ of left-invariant %orthonormal ($i=1,2,3$) or Witt ($i=u,v,3$)  
sections $(\fre^t_i)$ of the frame bundle $\mathrm{F}(\mathrm{H})$ such that $(\fre^t_i) \in \mathrm{F}(\mathrm{H})$ is an orthonormal ($i=1,2,3$) or Witt ($i=u,v,3$) frame for $\chi_t$. Then %The family $\{ \fre^t_i\}_{t \in \mathcal{I}}$ defines canonically a triplet of vectors\footnote{We denote $\fre_i^t \in \mathfrak{X}(M) \times \mathfrak{X}(M) \times \mathfrak{X}(M) $ with the same symbol as the member of the family of orthonormal or Witt frames $\{\fre_i^t\}_{t \in \mathcal{I}}$ at time $t$ whenever no possible confusion may arise.} $\fre_i^t \in  \mathfrak{X}(M) \times \mathfrak{X}(M) \times \mathfrak{X}(M)$ such that 
$\{ \partial_t, (\fre_i^t)\}$ conforms an orthonormal or Witt frame for $(M,g)$. For ease of notation
we may denote the triplet $(\fre^t_i)$ simply by $\fre^t_i$, thus writing $\{ \fre^t_i\}_{t \in \mathcal{I}}$ and $\{ \partial_t, \fre_i^t\}$ instead of $\{ (\fre^t_i)\}_{t \in \mathcal{I}}$ and $\{ \partial_t, (\fre_i^t)\}$.
Analogously, we denote the corresponding family of dual orthonormal or Witt coframes on $\{(\mathrm{H},\chi_t)\}_{t \in \mathcal{I}}$ by $\{ \fre_t^i\}_{t \in \mathcal{I}}$ and the corresponding dual orthonormal or Witt coframe on $(M,g)$ by $\{ \d t, \fre_t^i\}$. 

\noindent
Having said this, we consider $g$ to have the form:
\begin{equation}
\label{eq:metgen}
g=\varepsilon \d t^2+\eta_{ij} \fre_t^i \otimes \fre_t^j\,, \qquad \varepsilon= \pm 1\,,
\end{equation}
%\begin{equation}
%\label{eq:metgen}
%g=\varepsilon \d t^2+\varepsilon \fre_t^1 \otimes \fre_t^1+\fre_t^2 \otimes \fre_t^2+ \fre_t^3 \otimes \fre_t^3\,, \qquad \varepsilon= \pm 1.
%\end{equation}
\noindent 
where:
\begin{equation}
\label{eq:etaexpression}
\eta=\begin{pmatrix}
\varepsilon & 0 & 0 \\ 0 & 1 & 0 \\ 0& 0 & 1
\end{pmatrix} \quad  \mathrm{for\,\, orthonormal \,\, bases\,\, }\,,\quad   \eta=\begin{pmatrix}
0 & 1 & 0 \\ 1 & 0 & 0 \\ 0& 0 & 1
\end{pmatrix} \quad \mathrm{for\,\, Witt \,\, bases}\,.
\end{equation}
% Two comments are in order. First, note that this choice for $g$ does not include Lorentzian signature metrics, the reason for this being that we have not found any solutions in Lorentzian signature under the hypotheses considered in this manuscript\footnote{There is a notable exception though. We refer the reader to Remark \ref{rem:lorsollight}.}. Hence this work is focused on the study of Riemannian and {\color{red}neutral-signature} metrics. Second, in the {\color{red}neutral-signature} case we set the time coordinate to be timelike since we can always achieve this after changing the metric with an overall sign.

\noindent
Given the equivalent description of metrics \eqref{eq:methinv} in terms of families of orthonormal or Witt frames $\{\fre_i^t\}_{t \in \mathcal{I}}$ in $\mathrm{H}$, we may  think of metrics on $M$ as time-evolutions 
of frames on $\mathrm{H}$. (The evolution will be determined later from the self-dual Einstein equations.) In fact, let $t_0 \in \mathcal{I}$ be an initial time and  %Note that $\chi_{t_0}$ is obtained from the four-dimensional perspective by restricting $g$ to the submanifold $\{ t_0\} \times \mathrm{H}$. 
$\fre_i^{t_0}$ an \emph{initial} orthonormal or Witt frame for the initial metric $\chi_{t_0}$. Then the time-evolution $\{\fre_i^t\}_{t \in \mathcal{I}}$ of such initial frame determines the metric $\{\chi_t\}_{t \in \mathcal{I}}$ and, in turn, the four-dimensional pseudo-Riemannian manifold $(M,g)$. We can write
\begin{equation}
\mathfrak{e}_i^t=U_{ij}^t \fre_j^{t_0}\, , \quad \mathfrak{e}_t^i=\fre^j_{t_0} (U^{t})^{-1}_{ji}\, , \quad 
\, U^t \in  \mathrm{GL}(3,\mathbb{R})\,, \quad U^{t_0}=\mathrm{Id}\,,
\end{equation}
\noindent
where $U^{t_0}=\mathrm{Id}$ is the initial condition for the time-evolution now encoded in $t\mapsto U^t$.

\noindent
We denote by $\mathcal{Z}\subset \mathfrak h= T_eH$ the  Heisenberg center and by 
$\mathcal{Z}_t\subset T_{(t,e)}(\{t\}\times H)\cong \mathfrak h$ the (constant) line which corresponds to $\mathcal Z$ under 
the canonical identification $\{t\}\times H\cong H$.

%Let $\mathfrak{Z}$ be the Heisenberg center, which defines a one-dimensional vector subspace of $\mathfrak{h}$. By left-translations, from $\mathfrak{Z}$ we can define a (trivial) line bundle $L \rightarrow \mathrm{H}$ whose fibers are precisely $\mathfrak{Z}$. This way we may define a family of (trivial) line bundles $\{L_t \rightarrow \{t\} \times \mathrm{H}\}_{t \in \mathcal{I}}$ whose fibers define a trivial family of one-dimensional vector subspaces $\{ \mathcal{Z}_t \}_{t \in \mathcal{I}}$ on each  submanifold $\{t\} \times \mathrm{H}$ such that\footnote{We write explicitly the subindex $t$ to {\color{red}indicate the pseudo-Riemannian hypersurface}  $(\{t\} \times \mathrm{H}, \chi_t)$  in which it is defined.} $\mathcal{Z}_t=\mathcal{Z}$. 

% and $\mathfrak{Z}$ be the centers of the Heisenberg algebra and the Heisenberg group, respectively. Let $\mathcal{Z}_t=T_{(t,z)}(\{t\} \times \mathfrak{Z} ) \subset T_{{(t,z)}} M$, where $t \in \mathcal{I}$ and $z \in \mathfrak{Z}$. 
\begin{mydef}
Let $(M,g)$ a Heisenberg four-manifold of neutral signature. We say it is \emph{timelike, spacelike} or \emph{lightlike} if $\mathcal{Z}_t$ is timelike, spacelike or lightlike respectively for every $t \in \mathcal{I}$. 
\end{mydef}
It will be shown later that the causal character of $\mathcal{Z}_t$ is constant for Einstein Heisenberg four-manifolds.

\subsection{Choice of adapted frames for Einstein Heisenberg four-manifolds}
On studying Riemannian Heisenberg four-manifolds and neutral-signature timelike, spacelike or lightlike Heisenberg four-manifolds $(M,g)$, it is convenient to use the following special frames $\{ \fre_i^t\}_{t \in \mathcal{I}}$ to describe the four-dimensional metric $g$. 
\begin{prop}\label{prop:1stNF}Let $(M,g)$ be a Heisenberg four-manifold.
\begin{itemize}
\item If $\varepsilon=1$, then $(M,g)$ is Riemannian and there exists an orthonormal frame $\{ \fre_i^t\}_{t \in \mathcal{I}}$ such that $\fre_1^{t}$ generates $\mathcal{Z}_t$  and $\fre_2^{t}$ is a linear combination of $\fre_2^{t_0}$ and $\fre_3^{t_0}$ for all $t \in \mathcal{I}$.
This implies:
\begin{equation}
U^t=\begin{pmatrix}
a(t) & 0& 0\\
0 & b(t) & f(t) \\
j(t) & h(t) & c(t)\\ \end{pmatrix}\,, \quad [\fre_2^{t_0}, \fre_3^{t_0}]=-2 k \fre_1^{t_0}, \quad k >0\,,
\label{eq:canlnriem}
\end{equation}
where $a,b,c,f,j,h \in C^\infty(\mathcal{I})$.
\item If $\varepsilon=-1$, then $(M,g)$ is of neutral signature and:
\begin{itemize}
\item If $(M,g)$ is a timelike Heisenberg four-manifold,  then there exists an orthonormal frame $\{ \fre_i^t\}_{t \in \mathcal{I}}$ such that $\fre_1^{t}$ spans $\mathcal{Z}_t$ and $\fre_2^{t}$ is a linear combination of $\fre_2^{t_0}$ and $\fre_3^{t_0}$ for all $t \in \mathcal{I}$.
 The conclusion is again (\ref{eq:canlnriem}).
%, which we repeat to avoid possible
%confusion when referring back to these cases later:} 
%\begin{equation}
%U^t=\begin{pmatrix}
%a(t) & 0& 0\\
%0 & b(t) & f(t) \\
%j(t) & h(t) & c(t)\\ \end{pmatrix}\,, \quad [\fre_2^{t_0}, \fre_3^{t_0}]=-2 k \fre_1^{t_0}, \quad k >0\,.
%\label{eq:canlbtime}
%\end{equation}
%with $a,b,c,f,j,h \in C^\infty(\mathcal{I})$.
%In this case we can choose $U^t$ to be:
%\begin{equation*}
%U^t=\begin{pmatrix}
%a(t) & 0 & 0\\
%0 & b(t) & f(t) \\
%g(t) & h(t) & c(t)\\\end{pmatrix}
%\end{equation*}
\item For spacelike Heisenberg four-manifolds $(M,g)$, we may choose\footnote{The reason to choose $\fre^t_3$ rather than $\fre_1^t$ is (\ref{eq:etaexpression}), where we fixed
the first vector of the orthonormal basis to be timelike.} the center $\mathcal{Z}_t$ to be spanned by $\fre_3^t$ for every $t \in \mathcal{I}$ and thus we may use the ansatz:
\begin{equation}
U^t=\begin{pmatrix}
c(t) & h(t)& j(t)\\
f(t) & b(t) & p(t) \\
0 &0 & a(t)\\ \end{pmatrix}\,, [\fre_1^{t_0}, \fre_2^{t_0}]=-2 k \fre_3^{t_0}\,, \quad k > 0 \,,
\label{eq:canlbspace}
\end{equation}
where %we have just shown the only non-vanishing Lie bracket and where 
$a,b,c,f,j,h,p \in C^\infty(\mathcal{I})$.
%After appropriate $\mathrm{SO}(1,2)$ transformations we can set:
%\begin{equation*}
%U^t=\begin{pmatrix}
%a(t) & h(t) & g(t)\\
%f(t) & b(t) & 0 \\
%0 & 0 & c(t)\\\end{pmatrix}
%\end{equation*}

\item If $(M,g)$ is a lightlike Heisenberg four-manifold,  we define $\fre_u^{t}=\frac{1}{\sqrt{2}}(\fre_2^{t}-\fre_1^{t})$ and $\fre_v^{t}=\frac{1}{\sqrt{2}}(\fre_1^{t}+\fre_2^{t})$, where $(\fre_1^t,\fre_2^t,\fre_3^t)$ is an orthonormal frame such that $\fre_u^{t_0}\in \mathcal{Z}_{t_0}$. For a certain interval $\mathcal{I}'_l \subset \mathcal{I}$ containing $t_0$, we may choose $\fre_3^t$ to be parallel to $\fre_3^{t_0}$ for every $t \in \mathcal{I}'_l$. Then $\{\fre_u^t,\fre_v^t,\fre_3^t\}$ is a (local) Witt basis such that\footnote{Differently from the timelike and spacelike cases, where we impose a vector of the time-dependent orthonormal basis to be parallel to the Heisenberg center, here we opt to fix the spacelike direction in the time-evolving Witt basis $\{\fre_i^t\}_{t \in \mathcal{I}'_l}$, so that $\fre_3^t$ remains parallel to $\fre_3^{t_0}$ and $\fre_u^t$ freely changes. As shown in Proposition~\ref{prop:zcausalconst}, if we impose the lightlike Heisenberg four-manifold to be Einstein, it occurs additionally that $\fre_u^t$ stays parallel to the direction of the center given by $\fre_u^{t_0}$.}:
\begin{equation}
U_W^t=\begin{pmatrix}
c(t) & h(t)& j(t)\\
f(t) & b(t) & p(t) \\
0 &0 & a(t)\\ \end{pmatrix}\,, [\fre_v^{t_0}, \fre_3^{t_0}]=-2 k \fre_u^{t_0}\,, \quad k > 0 \,,
\label{eq:canlblight}
\end{equation}

%, we define $\fre_u^{t}=\frac{1}{\sqrt{2}}(\fre_1^{t}-\fre_2^{t})$ 
%and $\fre_v^{t}=\frac{1}{\sqrt{2}}(\fre_1^{t}+\fre_2^{t})$. If we choose $\fre_3^t$ to be parallel to $\fre_3^{t_0}$ for every $t\in \mathcal{I}$
%then with respect to the Witt basis $\{\fre_u^t, \fre_v^t,\fre_3^t\}$ we may impose: 
%then there exists a Witt basis $\{\fre_u^t, \fre_v^t,\fre_3^t\}$ such that $\fre_u^t$ is parallel to $\fre_u^{t_0}$ for every $t\in \mathcal{I}$. 
%\VC{The previous claim that $\fre_3^t$ is parallel to $\fre_3^{t_0}$ is not correct, neither the previous conclusion about $U_W^t$. 
%I have therefore deleted them. Please check if/where these incorrect claims are later used.} It follows that the matrix 
%relating the Witt bases at times $t$ and $t_0$ has the form:} 
%\begin{equation}
%{\color{red}U_W^t=\begin{pmatrix}
%* & 0& 0\\
%* & * & * \\
%* &* & *\\ \end{pmatrix}}\,, [\fre_v^{t_0}, \fre_3^{t_0}]=-2 k \fre_u^{t_0}\,, \quad k %> 0 \,,
%\label{eq:canlblight}
%\end{equation}
where we write $U_W^t$ to indicate that this matrix is related to the Witt basis and $a,b,c,f,j,h,p \in C^\infty(\mathcal{I}'_l)$. %We also note that the remaining Lie brackets are zero.
\end{itemize}
\end{itemize}
\end{prop}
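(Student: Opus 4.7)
My plan is a pointwise linear-algebraic construction in $t$, made smooth by continuous dependence. I begin by observing that the centre $\mathcal{Z}\subset\mathfrak{h}$ is a \emph{fixed} one-dimensional subspace of $\mathfrak{h}$, independent of $t$: spanned by $\fre_1^{t_0}$ in the Riemannian and timelike cases, by $\fre_3^{t_0}$ in the spacelike case, and by $\fre_u^{t_0}$ in the lightlike case. Only its causal character with respect to the time-varying metric $\chi_t$ is assumed to be constant along $\mathcal{I}$.

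For the Riemannian and timelike cases I would first use that $\mathcal{Z}$ is $\chi_t$-non-degenerate of sign $\varepsilon$: the ansatz $\fre_1^t=a(t)\fre_1^{t_0}$ with $a(t)>0$ chosen so that $\chi_t(\fre_1^t,\fre_1^t)=\varepsilon$ provides the desired unit generator of $\mathcal{Z}$. Its $\chi_t$-orthogonal complement $W_t$ is then a smoothly varying two-dimensional, positive-definite subspace of $\mathfrak{h}$, and the fixed two-plane $V:=\Span(\fre_2^{t_0},\fre_3^{t_0})$ meets $W_t$ in a subspace of dimension at least $2+2-3=1$ by the dimension formula. Because $W_t$ is positive-definite, any non-zero vector in $W_t\cap V$ can be $\chi_t$-normalised, so I pick $\fre_2^t$ smoothly as a $\chi_t$-unit vector in $W_t\cap V$ and let $\fre_3^t$ be the smoothly-signed unit vector in $W_t$ orthogonal to $\fre_2^t$. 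Expanding $\fre_2^t$ in the basis $(\fre_j^{t_0})$ then shows that its $\fre_1^{t_0}$-component vanishes, so $U^t$ takes the shape (\ref{eq:canlnriem}).

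The spacelike case is analogous but less constrained: I would only require $\fre_3^t$ to span $\mathcal{Z}$, hence $\fre_3^t=a(t)\fre_3^{t_0}$ with $a(t)$ fixed by $\chi_t(\fre_3^t,\fre_3^t)=1$, while $(\fre_1^t,\fre_2^t)$ may be any Lorentzian orthonormal pair in the $\chi_t$-orthogonal complement of $\mathcal{Z}$; this immediately yields (\ref{eq:canlbspace}). For the lightlike case the centre is $\chi_t$-degenerate, so no orthonormal choice aligns with it and I would pass to a Witt frame. The vector $\fre_3^{t_0}$ is $\chi_{t_0}$-spacelike, so by continuity there is an open $\mathcal{I}'_l\ni t_0$ on which $\chi_t(\fre_3^{t_0},\fre_3^{t_0})>0$; I take $\fre_3^t=a(t)\fre_3^{t_0}$ to be $\chi_t$-unit spacelike. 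Its $\chi_t$-orthogonal complement in $\mathfrak{h}$ is a two-dimensional Lorentzian subspace containing the null centre, and choosing a null dual pair $(\fre_u^t,\fre_v^t)$ there with $\chi_t(\fre_u^t,\fre_v^t)=1$ completes a Witt frame and delivers (\ref{eq:canlblight}).

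The main obstacle I foresee is the smooth selection of $\fre_2^t$ in the Riemannian/timelike cases: the intersection $W_t\cap V$ is one-dimensional only generically, and at $t=t_0$ it is all of $V$ (since $V=W_{t_0}$ there), while it may also jump to $V$ at other isolated $t$ at which $\fre_1^{t_0}$ becomes $\chi_t$-orthogonal to $V$. Near such points the sign and smoothness of the choice must be handled with care, and one may need to shrink $\mathcal{I}$ to a sub-interval around $t_0$ to secure a smooth selection. Apart from this, the argument reduces to elementary linear algebra together with the continuous dependence of signatures on $t$.
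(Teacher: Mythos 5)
Your construction coincides with the paper's own proof in every case: normalize a generator of the fixed centre line, take $\fre_2^t$ to be a unit vector of $\mathrm{span}(\fre_2^{t_0},\fre_3^{t_0})$ lying in $\mathcal{Z}_t^{\perp_t}$ (Riemannian/timelike), constrain only the unit vector along $\mathcal{Z}_t$ (spacelike) respectively the spacelike direction $\fre_3^t\parallel\fre_3^{t_0}$ on a subinterval where this line stays spacelike (lightlike), and complete to an orthonormal or Witt frame. The smooth-selection caveat you raise --- at instants where $\mathrm{span}(\fre_2^{t_0},\fre_3^{t_0})$ becomes entirely $\chi_t$-orthogonal to the centre, the one-dimensional intersection degenerates --- is a genuine subtlety that the paper's proof passes over in silence; it does not affect the sequel, since the Einstein condition in Propositions~\ref{prop:zcausalconst} and~\ref{prop:2ndNF} rigidifies the frame, but your attention to it is well placed.
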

%\VC{In a previous text, which I have deleted, since it was not needed,  you had the formulas $\fre_u^{t}=\frac{1}{\sqrt{2}}(\fre_1^{t}-\fre_2^{t})$, $\fre_v^{t}=\frac{1}{\sqrt{2}}(\fre_1^{t}+\fre_2^{t})$.  
%Did you mean $\frac{1}{\sqrt{2}}(\fre_2^{t}-\fre_1^{t})$ ? Otherwise we get $\fre_u^{t}=\chi_t(e_u,e_v)=-1$ contradicting the above conventions. Please double check that the inconsistent formula is not used in later calculations.} 

\begin{proof}

In the case $\varepsilon =1$ and in the case $\varepsilon = -1$ with timelike center the 
plane $E=\mathrm{span}\{ \fre_2^{t_0},\fre_3^{t_0}\}$ is spacelike and the line $\mathcal{Z}_t$ is definite with respect to $\chi_t$. 
Therefore, it suffices to choose a unit vector 
$\fre_1^t\in \mathcal{Z}_t$, a unit vector $\fre_2^t\in  E$ perpendicular to $\fre_1^t$ with respect to $\chi_t$
and to complement this pair to an orthonormal frame.   When $\varepsilon = -1$ with spacelike center, 
we can still choose a unit vector $\fre_3^t\in \mathcal{Z}_t$, which suffices for the claim. Note that we cannot specialize further our choice of ansatz, since we do not know \emph{a priori} the space-time character of the intersection of $\mathrm{span}\{\fre_1^{t_0},\fre_2^{t_0}\}$ with $\mathcal{Z}_t^{\perp_t}$, the orthogonal complement to the center with respect to $\chi_t$. Finally, in the lightlike case there exists a subinterval $\mathcal{I}'_l \subset \mathcal{I}$ containing $t_0$ in which the line generated by $\fre_3^{t_0}$ is spacelike, since this is an open condition. Choosing $\fre_3^t$ to be parallel to $\fre_3^{t_0}$ for every $t\in \mathcal{I}'_l$ we conclude.  
\end{proof}

 %However, 
%there may not exist a unit vector perpendicular to $\fre_3^t$  in the plane $E$, which is now Lorentzian. 

%In the last case we can choose a space-like unit vector $\fre_3^t$ in the degenerate plane $\mathcal{Z}_t^{\perp_t}$, where 
%the notation $\perp_t$ refers to the metric $\chi_t$.  That is possible, since the metric of that plane is positive semi-definite with one-dimensional  kernel $\mathcal{Z}_t$.  
%In fact, the set of spacelike vectors in $\mathcal{Z}_t^{\perp_t}$ is simply $\mathcal{Z}_t^{\perp_t}\setminus \mathcal{Z}_t$. 
%Moreover, we can assume that the dependence $t\mapsto \fre_3^t$ is smooth, since 
%$\mathcal{I}$ is contractible. Then we can choose a (lightlike) generator $\fre_u^t$ of $\mathcal{Z}_t$ and the unique 
%null vector $\fre_v^t\in (\fre_3^t)^{\perp_t}$ such that $\chi_t (\fre_u^t,\fre_v^t)=1$. 
%This yields a Witt basis $\{ \fre_u^t,\fre_v^t,\fre_3^t\}$ with the claimed properties. \end{proof}

\begin{prop}
\label{prop:zcausalconst}
Let $(M,g)=(\mathcal{I} \times \mathrm{H}, \varepsilon \d t^2+\chi_t)$ be an Einstein Heisenberg four-manifold of neutral signature. Assume that $\mathcal{Z}_{t_0}$ for some $t_0 \in \mathcal{I}$ is timelike (resp. spacelike or lightlike). Then $\mathcal{Z}_t$ remains  timelike (resp. spacelike or lightlike) for all $t \in \mathcal{I}$. 
\end{prop}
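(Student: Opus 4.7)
Since $v_1$ lies in the centre of $\mathfrak{h}$, the one-parameter subgroup $\exp(\mathbb{R} v_1)\subset\mathrm H$ acts on $M$ by isometries, so $v_1$ extends to a Killing vector field on $(M,g)$. By left-invariance of $\chi_t$, the squared norm
\[
\psi(t):=\chi_t(v_1,v_1)
\]
depends only on $t$, and the causal character of $\mathcal{Z}_t$ is recorded by its sign: timelike iff $\psi(t)<0$, spacelike iff $\psi(t)>0$, lightlike iff $\psi(t)=0$.

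My plan is to treat the timelike and spacelike cases together by a connectedness argument and then deduce the lightlike case by contrapositive. Suppose first that $\psi(t_0)\neq 0$, and let $J\subset\mathcal I$ be the connected component of $\{t\in\mathcal I:\psi(t)\neq 0\}$ containing $t_0$; since the causal type of $\mathcal Z_t$ is constant on $J$, Proposition~\ref{prop:1stNF} provides an adapted orthonormal frame on $J$ in which the vector $\fre_\star^t$ spanning $\mathcal Z_t$ (namely $\fre_1^t$ in the timelike case or $\fre_3^t$ in the spacelike case) satisfies $\fre_\star^t=a(t)\,\fre_\star^{t_0}$ for some smooth, nowhere-vanishing $a$ on $J$. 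The unit-norm condition then yields $\psi(t)=\pm 1/a(t)^2$ up to a positive constant, so $J=\mathcal I$ iff $a$ extends smoothly and without zeros to all of $\mathcal I$.

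The Einstein equation $\mathrm{Ric}_g=\Lambda g$, written in this adapted frame, provides that extension. Because $v_1$ is a Killing vector field lying in the centre of $\mathfrak h$ and commuting with $\partial_t$, a Koszul computation of the Ricci tensor combined with the structure equation \eqref{basis:eq} shows that most bracket contributions involving $v_1$ vanish, and the mixed component $\mathrm{Ric}(\partial_t,v_1)=0$ reduces to a first-order ODE for $\dot a/a$ whose coefficients are smooth in the remaining frame variables and free of algebraic singularities. Coupled with the trace-like equation $\mathrm{Ric}(v_1,v_1)=\Lambda\psi$, this produces a smooth evolution for $\log|a|$ on all of $\mathcal I$, so no finite-time blow-up of $a$ can occur inside $\mathcal I$. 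Therefore $J=\mathcal I$, which settles both the timelike and spacelike cases in either signature.

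For the lightlike case, assume $\psi(t_0)=0$ and suppose for contradiction that $\psi(t_1)\neq 0$ for some $t_1\in\mathcal I$. Applying the preceding step with $t_1$ in place of $t_0$ forces $\psi(t)\neq 0$ for every $t\in\mathcal I$, contradicting $\psi(t_0)=0$; hence $\psi\equiv 0$ on $\mathcal I$ and $\mathcal Z_t$ remains lightlike. The main obstacle will be the explicit extraction of the ODE for $a(t)$ from the Einstein system and the verification of its smoothness on $\mathcal I$: this is where the centrality of $v_1$ and the nilpotent structure of $\mathfrak h$ are essential, as they prevent algebraic singularities from entering the relevant components of the Ricci tensor.
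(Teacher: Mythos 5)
There is a genuine gap, and it sits exactly where you flag it yourself. Your strategy is to show that the set $\{t:\psi(t)\neq 0\}$ is closed in $\mathcal I$, by arguing that the Einstein equations give a smooth, singularity-free first-order ODE for $\log|a|$ so that $a$ cannot blow up and hence $\psi=\pm 1/a^2$ cannot reach zero. But this ODE is never exhibited, and its regularity as $\psi\to 0$ is precisely the crux: at a boundary point of $J$ the adapted orthonormal frame of Proposition~\ref{prop:1stNF} degenerates ($a\to\infty$), and the Ricci components written in that frame do acquire rational expressions whose denominators can vanish (compare the paper's own formula for $a'$ in the quaternionic K\"ahler analysis, where a factor $cb'+bc'$ must be moved to the left-hand side to avoid zeros of the denominator). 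The assertion that centrality of $v_1$ and nilpotency of $\mathfrak h$ "prevent algebraic singularities" is an appeal to a computation you have not done, and the contrapositive treatment of the lightlike case inherits the same unproven claim. As written, the argument establishes only that the causal type is locally constant where $\psi\neq 0$, which is automatic by continuity and does not use the Einstein condition at all.

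The paper closes the loop in the opposite direction, which avoids the degeneration problem entirely: it shows that the \emph{lightlike} set is open. Working in the Witt-frame ansatz \eqref{eq:canlblight}, which stays regular as the center becomes null, it computes $\ric^g(\partial_t,\fre_v^t)$ and $\ric^g(\partial_t,\fre_3^t)$, and the Einstein condition turns these into first-order ODEs for $j$ and $h$ whose right-hand sides vanish on the locus $j=h=0$ (with nonvanishing denominators near that locus, by non-degeneracy of $U^t_W$); uniqueness for ODEs then forces $j\equiv h\equiv 0$, so the center stays lightlike on an open subinterval. Since the timelike and spacelike sets are open for free, the three sets form an open partition of the connected interval $\mathcal I$ and exactly one of them is all of $\mathcal I$. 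If you want to salvage your approach, you would need to supply the missing a priori estimate ruling out $a\to\infty$ in finite time; the paper's openness-plus-connectedness argument replaces that global estimate with a purely local uniqueness statement.
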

\begin{proof}
First we will show that if the causal or space-time character of $\mathcal{Z}_{t_0}$ is lightlike, then it remains invariant in an open subinterval of $\mathcal{I}$ containing $t_0$. For that, assuming that $\mathcal{Z}_{t_0}$ is lightlike,  in an open subinterval $\mathcal{I}'_l$ we pick up the ansatz \eqref{eq:canlblight} for $U^t$ and the Lie brackets at $t_0$. If $\ric^g$ denotes the Riemann tensor of $(M,g)$, we have:
\begin{eqnarray*}
\ric^g(\partial_t,\fre_v^t)&=&k\frac{-2a b(b c -f h) (bj-hp)a'+f' (bj-hp)^3 }{a^2(bc-fh)^2}\\ &+&k\frac{(bj-hp)^2(c(p b'-b p'))+f(-j b'+h p'))}{a^2(bc-fh)^2}\\ &+&k \frac{ -2 f h^2 p b'+ b^3 c j'-b^2(h(2 j f'+fj'))+c(-j b'+p h'+h p') }{(bc-fh)^2}\\ & &+ k \frac{   bh(2hp f'+f(jb'+ph'+hp'))}{(bc-fh)^2}\,,
\end{eqnarray*}
\begin{eqnarray*}
\ric^g(\partial_t,\fre_3^t)&=&-k\frac{b^3 j (j c'-c j')+b^2( c h p j'+2 a^2(-h c'+ch')+j^2(h f'-f h'))}{a^2 (bc-fh)^2}\\&-&k\frac{b^2 j( c p h'+h(-2pc'+f j'-cp')+h^2(2a^2 f b'+p^2(-cb'+h f'))}{a^2 (bc-fh)^2}\\ &-&k \frac{p f h^2(j b'-h p')+bh^2(p^2c'-2a^2 f'-2 j p f')}{a^2 (bc-fh)^2}\\&-&k \frac{b h c p(j b'-p h'+ hp')+bh f(-j^2 b'-hp j'+j(ph'+hp'))}{a^2 (bc-fh)^2}\,.
\end{eqnarray*}
The Einstein condition implies that $\ric^g(\partial_t, \fre_v^t)=\ric^g(\partial_t, \fre_3^t)=0$. We can solve for $j'(t)$ and $h'(t)$ and we get:
\begin{eqnarray*}
j'(t)&=&\frac{q_1(a,b,c,f,h,j,p,a',b',c',f',p')}{a^2 b(bc-fh) (2a^2 b^2c-f(bj-hp)^2)}\, , \\ h'(t)&=&\frac{q_2(a,b,c,f,h,j,p,a',b',c',f',p')}{a^2 b (2a^2 b^2c-f(bj-hp)^2)}\, ,
\end{eqnarray*}
where $q_1,q_2: \mathbb{R}^{12} \rightarrow \mathbb{R}$ are polynomials that vanish on the subspace $V \subset \mathbb{R}^{12}$ defined by $j=h=0$. (Note that the denominators are non-zero in a neighborhood of $V$, by non-degeneracy of $U^t_W$.) 
By a continuity argument, we can guarantee that $h(t)=j(t)=0$ in $\mathcal{I}'_l$ and therefore $\mathcal{Z}_t$ is lightlike for all $t \in \mathcal{I}'_l$.

\noindent
 Similarly, if $\mathcal{Z}_{t_0}$ is timelike (resp. spacelike), then $\mathcal{Z}_{t}$ remains timelike (resp. spacelike) in a local subset of $\mathcal{I}$ around $t_0$, since these are open conditions. Therefore, the three subsets $\mathcal{I}_{\mathrm{time}}$,   $\mathcal{I}_{\mathrm{space}}$,  $\mathcal{I}_{\mathrm{light}}$ of $\mathcal I$ consisting
of $t$ at which $\mathcal{Z}_t$ is  respectively timelike, spacelike or lightlike are open. By connectedness of $\mathcal I$ this proves that $\mathcal{I}$ coincides 
with one of these three subsets.
\end{proof}
\begin{rem}
When studying neutral-signature Einstein Heisenberg four-manifolds, we see that the types of timelike, spacelike and lightlike Einstein Heisenberg four-manifolds exhaust all possibilities, since Proposition \ref{prop:zcausalconst} does not allow changes of the causal type of $\mathcal{Z}_t$. 
\end{rem}
\begin{rem}
Observe that the Einstein condition  in the previous proposition is too strong, and one may actually require much weaker conditions for the proposition to equally hold, such as requiring that $\ric^g(\partial_t) \vert_{\{t \times H\}}=0$. 
\end{rem}
\begin{prop}\label{prop:2ndNF}
Let $(M,g)$ be a Riemannian or neutral-signature Einstein Heisenberg four-manifold. Then:
\begin{itemize}
\item If $(M,g)$ is either Riemannian or  of neutral-signature and timelike, then we can choose:
\begin{equation} 
U^t=\begin{pmatrix}
a(t) & 0& 0\\
0 & b(t) & 0 \\
0 & h(t) & c(t)\\ \end{pmatrix}\,, \quad [\fre_2^{t_0}, \fre_3^{t_0}]=-2 k \fre_1^{t_0}, \quad k >0\,.
\label{eq:ansriemsim}
\end{equation}
%\item If $(M,g)$ is {\color{red}neutral-signature} and timelike, we can pick up:
%\begin{equation} 
%U^t=\begin{pmatrix}
%a(t) & 0& 0\\
%0 & b(t) & 0 \\
%0 & h(t) & c(t)\\ \end{pmatrix}\,, \quad [\fre_2^{t_0}, \fre_3^{t_0}]=-2 k \fre_1^{t_0}, \quad k >0\,.
%\label{eq:anstimesim}
%\end{equation}
\item If $(M,g)$ is neutral-signature and spacelike, we can choose the following ansatz in an open subinterval $\mathcal{I}'_s \subset \mathcal{I}$ containing $t_0$:
\begin{equation}
U^t=\begin{pmatrix}
c(t) &h(t) & 0\\
-h(t) & b(t) & 0 \\
0 & 0 & a(t) \\
\end{pmatrix}\,, \quad [\fre_1^{t_0}, \fre_2^{t_0}]=-2 k \fre_3^{t_0}, \quad k >0\,.
\label{eq:ansspacesim}
\end{equation}
\item If $(M,g)$ is neutral-signature and lightlike, we can pick up in an open interval $\mathcal{I}'_l \subset \mathcal{I}$:
\begin{equation}
U_W^t=\begin{pmatrix}
1 & 0 & 0 \\
f(t) & b(t) & p(t) \\
0 &0 & a(t)\\ \end{pmatrix}\,, [\fre_v^{t_0}, \fre_3^{t_0}]=-2 k \fre_u^{t_0}\,, \quad k > 0 \,.
\label{eq:anslightsim}
\end{equation}
\end{itemize}
\end{prop}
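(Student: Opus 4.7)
The plan is to follow the strategy used in the proof of Proposition \ref{prop:zcausalconst}. The Einstein condition $\ric^g = \varepsilon \Lambda g$ forces $\ric^g(\partial_t, X) = 0$ for every vector field $X$ tangent to the $\mathrm{H}$-orbits, since $g(\partial_t, X) = 0$. Evaluating these mixed Ricci components in the ansatze of Proposition \ref{prop:1stNF} should produce a system of first order ODEs whose right-hand sides factor through the matrix entries one wishes to eliminate. Combined with the initial condition $U^{t_0} = \mathrm{Id}$ and uniqueness of solutions to ODE initial-value problems, this will force the undesired entries to vanish on the relevant interval ($\mathcal{I}$, $\mathcal{I}'_s$ or $\mathcal{I}'_l$).

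Concretely, for the Riemannian and timelike neutral-signature cases starting from \eqref{eq:canlnriem}, I would compute $\ric^g(\partial_t, \fre_2^t)$ and $\ric^g(\partial_t, \fre_3^t)$ and set them to zero. Solving for $f'$ and $j'$ is expected to yield a system of the schematic form
\[
f' = P_1\, f + Q_1\, j\,, \qquad j' = P_2\, f + Q_2\, j\,,
\]
with coefficients $P_i, Q_i$ rational in $a,b,c,h$ and their derivatives and with non-vanishing denominators near $(f,j)=(0,0)$. Since $f(t_0) = j(t_0) = 0$ (from $U^{t_0} = \mathrm{Id}$), ODE uniqueness then forces $f \equiv j \equiv 0$ on $\mathcal{I}$, producing precisely the simplified ansatz \eqref{eq:ansriemsim}.

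For the lightlike case, the conclusion $j \equiv h \equiv 0$ is essentially already contained in the proof of Proposition \ref{prop:zcausalconst}. The remaining normalisation $c \equiv 1$ in \eqref{eq:anslightsim} is a pure gauge choice: the Witt pairing is preserved under the one-parameter rescaling $\fre_u^t \mapsto \lambda(t)\,\fre_u^t$, $\fre_v^t \mapsto \lambda(t)^{-1}\fre_v^t$, $\fre_3^t \mapsto \fre_3^t$, and setting $\lambda = c^{-1}$ absorbs $c$ into a redefinition of the other entries of the second row of $U^t_W$. For the spacelike case, ansatz \eqref{eq:canlbspace} still carries a residual $\mathrm{SO}(1,1)$ gauge freedom mixing $\fre_1^t$ and $\fre_2^t$; I would first fix this gauge by, say, imposing $p \equiv 0$, and then the Einstein components $\ric^g(\partial_t, \fre_1^t) = 0$ and $\ric^g(\partial_t, \fre_2^t) = 0$ should provide two ODEs for the remaining undesired combinations $j$ and $f+h$ whose right-hand sides vanish at the target locus; the initial conditions together with uniqueness then complete the argument.

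The main technical obstacle is the length of the Ricci computations in the general ansatze of Proposition \ref{prop:1stNF}, which the authors carry out with computer algebra. The conceptual crux, however, is the factorisation structure of the Einstein-derived ODE system: the entries one wishes to eliminate appear as factors on the right-hand sides, so that the trivial solution compatible with $U^{t_0} = \mathrm{Id}$ is the unique one.
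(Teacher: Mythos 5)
Your overall strategy --- using the mixed components $\ric^g(\partial_t,\cdot)=0$ to produce first-order ODEs for the unwanted entries of $U^t$ and combining the initial condition $U^{t_0}=\mathrm{Id}$ with uniqueness for ODEs --- is exactly what the paper does in the spacelike case, and your lightlike discussion (reading off $h=j=0$ from the proof of Proposition \ref{prop:zcausalconst} and absorbing $c$ by a rescaling of the Witt frame) matches the paper's.

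The gap is in the Riemannian/timelike case. There the two relevant components are (cf.\ \eqref{eq:algunosriccitime})
\[
\ric^g(\partial_t,\fre_2^t)=\mp\frac{k\bigl(b(cj'-jc')+f(jh'-hj')\bigr)}{a^2}\,,\qquad
\ric^g(\partial_t,\fre_3^t)=\mp\frac{k\,j\,(bf'-fb')}{a^2}\,.
\]
The first can be rewritten as $j'(bc-fh)=j(bc'-fh')$, a linear homogeneous ODE for $j$ alone (the factor $bc-fh$ is nonzero since $\det U^t=a(bc-fh)\neq 0$), so $j(t_0)=0$ indeed forces $j\equiv 0$. But the second equation is $j\cdot(bf'-fb')=0$: it cannot be solved for $f'$ near $j=0$, and once $j\equiv 0$ it is identically satisfied and imposes \emph{no} condition on $f$. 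So the regular linear system $f'=P_1f+Q_1j$, $j'=P_2f+Q_2j$ you expect does not materialise, and $f\equiv 0$ does not follow from the Einstein equation. Nor can it: with $j\equiv 0$ the entry $f$ only records which orthonormal frame of the plane $\mathrm{span}\{\fre_2^{t_0},\fre_3^{t_0}\}$ has been chosen, i.e.\ it is pure gauge. The paper removes it by an $\mathrm{SO}(2)$ rotation of $(\fre_2^t,\fre_3^t)$ (a QR-type normalisation of the lower-right $2\times2$ block of \eqref{eq:canlnriem}) --- exactly the residual gauge fixing you invoke for the other two cases but omit here. A related misassignment occurs in your spacelike sketch: the quantities killed by the Einstein ODE system are $p$ and $j$ (the couplings to the central direction $\fre_3^{t_0}$), while putting the remaining block into the form $\begin{pmatrix}c&h\\-h&b\end{pmatrix}$ is the $\mathrm{SO}(1,1)$ gauge choice; attributing $p\equiv 0$ to gauge fixing and $f+h$ to the Einstein equations interchanges these roles and would not go through as stated (the $\mathrm{SO}(1,1)$ action mixes $p$ with $j$, so it cannot in general be used to set $p\equiv 0$ before the Einstein equations have been brought to bear).
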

\begin{proof}
Regarding the neutral-signature case:
\begin{itemize}
\item If $(M,g)$ is timelike, we can choose for every $t \in \mathcal{I}$ the ansatz \eqref{eq:canlnriem} for the matrix $U^t$ and for the Lie brackets at $t_0$. If $\ric^g$ denotes the Ricci tensor of $(M,g)$, we find:
\begin{equation}
\ric^g(\partial_t, \fre_2^{t})=-\frac{k \left(b \left(c j'-j c'\right)+f \left(j h'-h j'\right)\right)}{a^2}\, , \quad \ric^g(\partial_t, \fre_3^{t})=-\frac{k   j \left(b f'-f b'\right)}{a^2}\,.
\label{eq:algunosriccitime}
\end{equation}
If $(M,g)$ is in addition Einstein, then the previous components must identically vanish. Assume $j(t) \neq 0$. Then we would find that $f(t)= f_0 b(t)$ in an interval $\mathcal{I}_0 \subset \mathcal{I}$ in which $b(t) \neq 0$ (which always exists given that $b(t_0)=1$). Since $f(t_0)=0$, then $f(t)=0$ in $\mathcal{I}_0$. Assume now there exists a lower (or upper) bound $t_1\in \mathcal{I}$ for $\mathcal{I}_0$  such that $b(t_1)=0$. By continuity we would have that $f(t_1)=0$ and the matrix $U^t$ would be degenerate at $t_1$, what contradicts the fact that $\{\fre_i^t\}_{t \in \mathcal{I}}$ is an orthonormal basis. Then we learn that such $t_1$ does not exist and $f(t)=0$ in the entire interval in $\mathcal{I}$, which in turn implies that $j(t)=j_0 c(t)$ in some open subinterval of $\mathcal{I}$. Owing the fact that $j(t_0)=0$, by the same reasoning as above we conclude that $j(t)=0$ in the whole $\mathcal{I}$.

%the result follows by setting $j=0$ as required by {\color{red}Proposition~\ref{prop:zcausalconst}} and then performing an $\mathrm{SO}(2)$ rotation on $\fre_2^t$ and $\fre_3^t$ to achieve the form for $U^t$ given \eqref{eq:anstimesim}.
\item If $(M,g)$ is spacelike we can choose for every $t \in \mathcal{I}$ the ansatz \eqref{eq:canlbspace} for $U^t$ and the Lie brackets at $t_0$. If $\ric^g$ denotes the Ricci tensor of $(M,g)$, we have:
\begin{eqnarray*}
\begin{split}
\ric^g (\partial_t, \fre_1^t)&=&k\frac{f'(b j-h p) +c(p b'-b p') +f(h p'-j b'))}{a^2} \,, \\
\ric^g( \partial_t, \fre_2^t)&=&k\frac{b(j c'-c j') +h(f j'-p c') +h'(c p-f j))}{a^2}\,.
\end{split}
\end{eqnarray*}
In order to have $\ric^g(\partial_t, \fre_1^t)=\ric^g(\partial_t, \fre_2^t)=0$ we must demand:
\begin{equation*}
\begin{split}
p'(t)&=\frac{c p b'-f j b'+f' (b j-h p)}{b c-f h}\,,\\
j'(t) &=\frac{b j c'-h p c'+h' (c p-f j)}{b c-f h}\,.\\
\end{split}
\end{equation*}
However, taking into account the initial conditions $a(t_0)=b(t_0)=c(t_0)=1$,  $f(t_0)=h(t_0)=j(t_0)=p(t_0)=0$ and $bc-fh \neq 0$ (from the non-degeneracy of $U^t$), through the use of the uniqueness and existence theorem of ODEs we infer that $p(t)=j(t)=0$ for all $t \in \mathcal{I}$. Performing now appropriate $\mathrm{SO}(1,1)$ rotations, we can finally impose $U_t$ to have the same form as in \eqref{eq:ansspacesim} in an open subinterval $\mathcal{I}'_s \subset \mathcal{I}$.

 %Then $\mathcal{Z}_t$ remains spacelike on $\mathcal{I}'$. %If $(M,g)$ is spacelike, after setting $p=j=0$ as required by Proposition \ref{prop:zcausalconst}, we can always perform an $\mathrm{SO}(1,1)$ rotation on $\fre_1^t$, $\fre_2^t$ (if necessary) to get  \eqref{eq:ansspacesim}  for any $t \in \mathcal{I}$.
\item If $(M,g)$ is lightlike, then \eqref{eq:anslightsim} just follows from the results obtained in the proof of Proposition \ref{prop:zcausalconst} and by absorbing\footnote{Another way to see this is by noticing that $\fre_t^u=\frac{1}{c(t)} \fre_{t_0}^u$, so that $g=- \d t^2+ \frac{1}{c(t)}\fre_{t_0}^u \odot \fre_t^v+ \fre_t^3  \otimes \fre_t^3$. By redefining $\frac{1}{c(t)} \fre_{t}^v \rightarrow \fre_t^v$, we observe that we can set $c(t)=1$. } the factor $c(t)$ in the functions $f(t), b(t)$ and $p(t)$, which yields the same metric.
\end{itemize}
Regarding the Riemannian case, using \eqref{eq:canlnriem} we compute:
\begin{equation}
\ric^g(\partial_t, \fre_2^{t})=\frac{k  \left(b \left(c j'-j c'\right)+f \left(j h'-h j'\right)\right)}{a^2}\, , \quad \ric^g(\partial_t, \fre_3^{t})=\frac{k   j \left(b f'-f b'\right)}{a^2}\,.
\end{equation}
We observe that, up to a global sign, this is exactly the same as the result obtained in \eqref{eq:algunosriccitime} for timelike Einstein Heisenberg four-manifolds. Then we equivalently conclude that $j(t)=0$, and by an appropriate $\mathrm{SO}(2)$ rotation, we arrive to \eqref{eq:ansriemsim}.
\end{proof}
\section{Quaternionic (para)K\"ahler Heisenberg four-manifolds}

\label{sec:3}

In this section we classify all Heisenberg four-manifolds which satisfy the condition of being quaternionic (para)K\"ahler. For that, we revise first the definition of a quaternionic (para)K\"ahler four-manifold. Recall first that an orientable pseudo-Riemannian four-manifold of Riemannian or neutral signature is called \emph{half-conformally flat} if 
its Weyl tensor is self-dual for one of the two orientations.
\begin{mydef}
\label{def:qk}
Let $(M,g)$ be a Riemannian or neutral-signature orientable four-manifold. It is said to be a \emph{quaternionic K\"ahler} (resp.\ \emph{quaternionic paraK\"ahler}) if and only if it is Einstein with non-zero Einstein constant and half-conformally flat. We shall refer to them jointly as \emph{quaternionic (para)K\"ahler} four-manifolds. 
\end{mydef}
\begin{rem}We observe that the definition of a quaternionic (para)K\"ahler four-manifold cannot be applied for larger dimensions, since the notion of self-duality is restricted to four-dimensions. Actually, for dimensions $n=4m$ with $m>1$ the definition of quaternionic (para)K\"ahler manifolds is that of pseudo-Riemannian manifolds admitting a parallel 
skew-symmetric (para)quaternionic structure $Q$. We recall that such a structure $Q$ is locally spanned by three anti-commuting endomorphism fields $I, J, K=IJ$ such that 
$I^2=J^2 = \pm \mathbf{1}$. However, in four dimensions this definition is too weak, since every orientable four-manifold satisfies it, and it turns out that the natural definition of quaternionic (para)K\"ahler four-manifold is that of Definition \ref{def:qk} \cite{B}.
\end{rem}
Let $\mathrm{W}^g$ denote the Weyl tensor of $(M,g)$. We define the \emph{Weyl self-duality tensor} $\mathcal{W}^g$ as the $(0,4)$-tensor given by:
\begin{equation}
\mathcal{W}^g(X,Y,U,V)= g((\star \mathrm{W}^g)(X,Y)U,V)-g(\mathrm{W}^g(X,Y)U,V)\, , \quad X,Y,U,V \in \mathfrak{X}(M)\,,
\label{eq:eqhpp}
\end{equation}
\noindent
where $\star$ denotes the Hodge star map with respect to a given orientation on $(M,g)$. 
Up to a factor, it is the antiself-dual part of the Weyl tensor and, hence, the obstruction to $(M,g,\star )$ being self-dual.
 Then we have that a four-manifold will be quaternionic (para)K\"ahler if and only if
%\footnote{If we have an antiself-dual Weyl tensor, by reversing the orientation we get a self-dual Weyl tensor, and that is why we focus just on self-dual tensors.}:
\begin{equation}
\mathcal{W}^g=0\, , \quad \mathrm{Ric}^g=\Lambda g \, , \quad \Lambda \in \mathbb{R}\setminus \{0\}\,,
\label{eq:qkcond}
\end{equation}
for one of the two orientations, where $\mathrm{Ric}^g$ denotes the Ricci tensor of $(M,g)$. (We will 
always consider the orientation such that $\mathcal{W}^g=0$.)

\noindent 
Now we proceed to the classification of quaternionic (para)K\"ahler Heisenberg four-manifolds attending to the signature of the metric and the space-time character of $\mathcal{Z}_{t_0}$.

\subsection{Quaternionic K\"ahler and timelike quaternionic paraK\"ahler Heisenberg four-manifolds}

We start by classifying all (Riemannian) quaternionic K\"ahler Heisenberg four-manifolds and all timelike quaternionic paraK\"ahler Heisenberg four-manifolds. Since we will carry out such classification simultaneously, it is convenient to coin the term (timelike) quaternionic (para)K\"ahler Heisenberg four-manifolds to refer to both of them at once. To particularize our results to one of these cases, we just have to set $\varepsilon=\pm 1$ correspondingly. 

\noindent
The reason for treating them at the same time is that we can use the same identical ansatz to describe the three-dimensional metric $\chi_t$ for both quaternionic K\"ahler and timelike quaternionic paraK\"ahler Heisenberg four-manifolds. Indeed, such ansatz is given by \eqref{eq:ansriemsim}:
\begin{equation} 
U^t=\begin{pmatrix}
a(t) & 0& 0\\
0 & b(t) & 0 \\
0 & h(t) & c(t)\\ \end{pmatrix}\,, \quad [\fre_2^{t_0}, \fre_3^{t_0}]=-2 k \fre_1^{t_0}, \quad k >0\,.
\label{eq:ansriemtsim}
\end{equation}
\begin{prop}
\label{prop:riccitime}
The non-zero components of the Ricci  tensor $\mathrm{Ric}^g$  of the metric obtained from \eqref{eq:ansriemtsim} are:
\begin{eqnarray*}
\mathrm{Ric}^g(\partial_t ,\partial_t)&=&-\frac{2 \left(a'\right)^2}{a^2}+\frac{a''}{a}-\frac{\left(4 b^2+h^2\right) \left(c'\right)^2+c^2 \left(4 \left(b'\right)^2+\left(h'\right)^2-2 b
   b''\right)}{2 b^2 c^2}\\ & & +\frac{\left(h c' h'+b^2 c''\right)}{b^2 c}\, ,\\
\mathrm{Ric}^g(\mathfrak{e}_1^t,\mathfrak{e}_1^t)&=&\frac{2 b^3 c^3 k^2-2 b c \left(a'\right)^2-a a' \left(c b'+b c'\right)+a b c a''}{a^2 b c} \, ,\\
\mathrm{Ric}^g(\mathfrak{e}_2^t,\mathfrak{e}_2^t)&=&-\frac{4 b^4 c^4 k^2+a^2 \left(4 c^2 \left(b'\right)^2-\left(h c'-c h'\right)^2\right)+2 a b c \left(c a' b'+a b' c'-a c b''\right)}{2
   a^2 b^2 c^2 \varepsilon }\, ,\\
   \mathrm{Ric}^g(\mathfrak{e}_2^t,\mathfrak{e}_3^t) &=& \frac{ c \left(h c'-c h'\right)(3ab'+ba')+a b \left(h \left(c'\right)^2-c c' h'+c \left(-h c''+c h''\right)\right)}{2 a
   b^2 c^2 \varepsilon }\, , \\
\mathrm{Ric}^g(\mathfrak{e}_3^t,\mathfrak{e}_3^t)&=&-\frac{4 b^4 c^4 k^2+2 a^2 b c b' c'+a^2 \left(h c'-c h'\right)^2+2 a b^2 \left(c a' c'+2 a \left(c'\right)^2-a c c''\right)}{2 a^2 b^2
   c^2 \varepsilon }\, .\\
\end{eqnarray*}
\end{prop}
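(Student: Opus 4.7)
\begin{myproof}[Proof proposal]
The strategy is a direct computation of the Ricci tensor using Cartan's structural equations in the orthonormal moving coframe $\{\d t, \fre_t^i\}$ dual to the frame $\{\partial_t, \fre_i^t\}$ described by the ansatz \eqref{eq:ansriemtsim}.

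First I would invert $U^t$ to express the time-dependent coframe explicitly in terms of the initial one, obtaining
\[
\fre_t^1 = \tfrac{1}{a}\fre_{t_0}^1, \qquad \fre_t^2 = \tfrac{1}{b}\fre_{t_0}^2 - \tfrac{h}{bc}\fre_{t_0}^3, \qquad \fre_t^3 = \tfrac{1}{c}\fre_{t_0}^3,
\]
and dually $\fre_{t_0}^1 = a\fre_t^1$, $\fre_{t_0}^2 = b\fre_t^2 + h\fre_t^3$, $\fre_{t_0}^3 = c\fre_t^3$. Using the Heisenberg Maurer--Cartan equations $\d\fre_{t_0}^1 = 2k\,\fre_{t_0}^2 \wedge \fre_{t_0}^3$ and $\d\fre_{t_0}^2 = \d\fre_{t_0}^3 = 0$ (which follow from \eqref{eq:dualbasisheis} together with $[\fre_2^{t_0},\fre_3^{t_0}]=-2k\fre_1^{t_0}$), one then computes each $\d\fre_t^i$ explicitly as a linear combination of the wedge products $\d t \wedge \fre_t^j$ and $\fre_t^i \wedge \fre_t^j$. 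For instance,
\[
\d\fre_t^1 = -\tfrac{a'}{a}\,\d t \wedge \fre_t^1 + \tfrac{2kbc}{a}\,\fre_t^2 \wedge \fre_t^3,
\]
and analogous (slightly longer) expressions for $\d\fre_t^2$, $\d\fre_t^3$.

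Next, I would solve Cartan's first structural equation $\d\theta^{\alpha} + \omega^{\alpha}{}_{\beta} \wedge \theta^{\beta} = 0$ in the coframe $\theta^0 = \d t$, $\theta^i = \fre_t^i$, subject to the metric-compatibility condition $\eta_{\alpha\gamma}\omega^{\gamma}{}_{\beta} + \eta_{\beta\gamma}\omega^{\gamma}{}_{\alpha} = 0$, where $\eta$ has the Riemannian or neutral-signature orthonormal form of \eqref{eq:etaexpression} according to $\varepsilon = \pm 1$. This determines the Levi-Civita connection one-forms $\omega^{\alpha}{}_{\beta}$ uniquely. Then the curvature two-forms are obtained from $\Omega^{\alpha}{}_{\beta} = \d\omega^{\alpha}{}_{\beta} + \omega^{\alpha}{}_{\gamma} \wedge \omega^{\gamma}{}_{\beta}$, from which one reads off the Riemann tensor components and contracts to get $\mathrm{Ric}^g$. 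A final sanity check is that the triangular form of $U^t$ combined with the Ricci-tensor reductions already used in the proof of Proposition~\ref{prop:2ndNF} forces $\mathrm{Ric}^g(\partial_t,\fre_i^t)=0$ and $\mathrm{Ric}^g(\fre_1^t,\fre_i^t)=0$ for $i=2,3$, so only the six components listed can be nonzero. The off-diagonal component $\mathrm{Ric}^g(\fre_2^t,\fre_3^t)$ survives precisely because the $h(t)$-entry couples $\fre_{t_0}^2$ into $\fre_3^t$.

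The main obstacle is simply the length of the calculation: six unknown functions of $t$, a coframe whose exterior derivative contains both $\d t$-terms and the Heisenberg-curvature term $2k\,\fre_t^2 \wedge \fre_t^3$, and the ensuing connection forms each contribute to the curvature through the quadratic term $\omega \wedge \omega$. No single step is conceptually hard, but the bookkeeping is substantial. In practice the cleanest presentation is to organise the computation by pairs $(\alpha,\beta)$ and to verify the result with a computer-algebra system, as the authors do (cf.\ the \emph{Mathematica} notebook \cite{M}); the proposition then follows by matching each listed expression against the symbolic output.
\end{myproof}
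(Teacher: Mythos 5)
Your proposal is correct and matches the paper's proof, which is simply ``Just by direct computation'' (carried out with \emph{Mathematica}, cf.\ \cite{M}); your Cartan-coframe setup, the coframe inversion, and the Maurer--Cartan relation $\d\fre_{t_0}^1=2k\,\fre_{t_0}^2\wedge\fre_{t_0}^3$ are all accurate and lead to the stated components. The only quibble is the phrase that Proposition~\ref{prop:2ndNF} ``forces'' the other components to vanish --- there those vanishings were \emph{imposed} via the Einstein condition to reduce the ansatz, whereas here they are simply found to hold identically for \eqref{eq:ansriemtsim} by the same direct computation; this does not affect the validity of your argument.
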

\begin{proof}
Just by direct computation.
\end{proof}
%In the following we will assume that the Einstein constant $\Lambda \neq 0$, since otherwise, we have a locally hyperK\"ahler space.
\begin{prop}
\label{prop:edosqk}
Let $(M,g)$ be a (timelike) quaternionic (para)K\"ahler Heisenberg four-manifold. Then:
\begin{equation}
\label{eq:edosqk}
a'(t)=-\frac{k^2 b^6 +a^2(\varepsilon \Lambda b^2+(b')^2)}{2ab b'},\quad \varepsilon \Lambda = \frac{3 (k b^3- ab')^3}{ a^2 b^2(-3k b^3+a b')}\,, \quad c=b \, , \quad  h=0\,.
\end{equation}
%\begin{eqnarray*}
%a'(t)&=&-\frac{k^2 b^3 c^3+a^2(\varepsilon \Lambda b c+b' c')}{a(cb'+b c')}\, , \\
%b'(t)&=& \frac{k b^3}{a}+\frac{\varepsilon \Lambda a^4 b}{(a^2(-9k \Lambda b^2 + \sqrt{\Lambda^2(\varepsilon\Lambda a^2+81 k^2 b^4)})^{1/3}}\\&-&(a^8 b^3(-9k \Lambda b^2 + \sqrt{\Lambda^2(\varepsilon\Lambda a^2+81 k^2 b^4)} )^{1/3}\, ,\\
%c(t)&=&b(t)\, , \quad h(t)=0\,.
%\end{eqnarray*}
\end{prop}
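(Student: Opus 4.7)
The plan is to impose both the Einstein equation $\ric^g=\Lambda g$ and the self-duality equation $\mc{W}^g=0$ on the ansatz~\eqref{eq:ansriemtsim}, first deducing that $h\equiv 0$ and $c\equiv b$, and then reducing the surviving equations to the two displayed relations.

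For the first step, I would exploit the fact that $\fre_2^t$ and $\fre_3^t$ are $\chi_t$-orthogonal with the same causal character. Besides the four diagonal components, the only nontrivial scalar identities produced by $\ric^g=\Lambda g$ are
\begin{equation*}
\ric^g(\fre_2^t,\fre_3^t)=0\qquad\text{and}\qquad \ric^g(\fre_2^t,\fre_2^t)-\ric^g(\fre_3^t,\fre_3^t)=0\,.
\end{equation*}
Using Proposition~\ref{prop:riccitime}, both identities are second-order differential expressions in $h$ and in $c-b$ that vanish when $h\equiv 0$ and $c\equiv b$. Combining them with a carefully selected component of $\mc{W}^g=0$ (for instance the one encoding the obstruction on the plane spanned by $\fre_2^t,\fre_3^t$), I would close the combined relations into a quasi-linear first-order ODE system $Y'=F(t,Y)$ in $Y=(h,h',c-b,(c-b)')$ having $Y\equiv 0$ as a solution. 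The initial conditions $a(t_0)=b(t_0)=c(t_0)=1$ and $h(t_0)=0$, together with the derivative values at $t_0$ forced by the same equations, give $Y(t_0)=0$, so uniqueness of ODE solutions implies $Y\equiv 0$ on a neighbourhood of $t_0$. Propagation to all of $\mc{I}$ then follows by openness and connectedness, exactly as in the proof of Proposition~\ref{prop:zcausalconst}.

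With $h=0$ and $c=b$, the metric becomes the diagonal one
\begin{equation*}
g=\varepsilon\,\d t^2+\varepsilon a^2\,\fre^1_{t_0}\otimes\fre^1_{t_0}+b^2\bigl(\fre^2_{t_0}\otimes\fre^2_{t_0}+\fre^3_{t_0}\otimes\fre^3_{t_0}\bigr)\,,
\end{equation*}
and the surviving independent Einstein relations reduce to $\ric^g(\partial_t,\partial_t)=\varepsilon\Lambda$, $\ric^g(\fre_1^t,\fre_1^t)=\varepsilon\Lambda$ and $\ric^g(\fre_2^t,\fre_2^t)=\Lambda$, together with the single remaining independent component of $\mc{W}^g=0$. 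Taking the linear combination of these that eliminates the second derivatives $a''$ and $b''$ should produce the claimed algebraic expression for $\varepsilon\Lambda$ purely in terms of $a,b,b'$; substituting this back into one of the other equations and solving for $a'$ should yield the displayed first-order ODE.

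The hardest part will be the first step: verifying by direct symbolic computation that the chosen combination of Einstein equations together with a single component of the self-duality tensor indeed closes up into an ODE system forcing $h\equiv 0$ and $c\equiv b$, and then extracting from the reduced Einstein-plus-self-duality system precisely the two relations stated in the proposition. The algebra is heavy but routine, and is most conveniently handled with the aid of the \emph{Mathematica} notebook cited in~\cite{M}.
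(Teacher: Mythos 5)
Your overall strategy (impose the Einstein and self-duality equations on the ansatz \eqref{eq:ansriemtsim}, first force $h\equiv 0$ and $c\equiv b$, then extract the two first-order relations from the overdetermined reduced system) matches the paper's, and your second half is essentially the paper's computation: the $a'$-equation comes from eliminating the second derivatives in the overdetermined Einstein system, and the formula for $\varepsilon\Lambda$ comes from the one surviving component of $\mathcal{W}^g$ (it cannot come from the Einstein equations alone, so make sure the Weyl component is genuinely part of the linear combination you describe). The problem lies in your first step.

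Your ODE-uniqueness argument for $Y=(h,h',c-b,(c-b)')$ needs the full initial condition $Y(t_0)=0$, but the geometric setup only gives $h(t_0)=0$ and $c(t_0)=b(t_0)=1$; the conditions $h'(t_0)=0$ and $c'(t_0)=b'(t_0)$ are not initial data and are not implied by the (second-order) Einstein equations. They can only come from the Weyl self-duality constraints, and there the relevant components \emph{factor}: for instance
$\mathcal{W}^g(\fre_1^t,\fre_2^t,\fre_1^t,\fre_3^t)=-\tfrac{(kbc+a')(-hc'+ch')}{2abc}$, and later $\mathcal{W}^g(\partial_t,\fre_1^t,\partial_t,\fre_1^t)$ carries the factor $(cb'-bc')$ multiplied by two further brackets. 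The zero set of such a product is a union of branches, so no well-defined system $Y'=F(t,Y)$ with $Y\equiv 0$ as its unique solution can be extracted before the degenerate branches ($kbc+a'=0$, $-kb^2c+ab'=0$, $-kc^2b+ac'=0$) are discarded. The paper discards them by showing each forces $\Lambda=0$, contradicting the hypothesis of a \emph{proper} quaternionic (para)K\"ahler manifold; this use of $\Lambda\neq 0$ is the one genuinely non-routine idea in the proof, and it appears nowhere in your proposal. Once it is in place, the factorization already gives $(h/c)'=0$ and $(c/b)'=0$ pointwise, so $h\equiv 0$ and $c\equiv b$ follow directly from the initial conditions and your ODE-uniqueness scaffolding becomes unnecessary. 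As written, however, your argument would equally "prove" the conclusion without the assumption $\Lambda\neq 0$, which is a sign that a hypothesis essential to the statement has not been used.
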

\begin{proof}
A (timelike) quaternionic (para)K\"ahler Heisenberg four-manifold is Einstein (with non-zero Einstein constant) and is half-conformally flat.
%, in the sense that its Weyl tensor is (anti)self-dual.

\noindent 
 Firstly, we observe that the Einstein condition imposes that $\ric^g(\fre_2^t, \fre_3^t)=0$. By Proposition \ref{prop:riccitime}, one can solve for $h''$ and get:
\begin{equation}
h''=\frac{-h (c')^2+c c' h'+c h c''}{c^2}+\frac{c a' (-h c'+c h')}{ac^2}+\frac{3c b' (-h c'+c h')}{bc^2}\,.
\end{equation}
Now we move into the Weyl self-duality tensor $\mathcal{W}^g$ defined back at \eqref{eq:eqhpp}. The component $\mathcal{W}^g(\fre_1^t,\fre_2^t,\fre_1^t,\fre_3^t)$ reads:
\begin{equation}
\mathcal{W}^g(\fre_1^t,\fre_2^t,\fre_1^t,\fre_3^t)=-\frac{(k bc+a')(-h c'+ch')}{2 a bc}\,.
\end{equation}
For the latter to be zero, either $k bc+a'=0$ or $-h c'+ch'=0$. If $k bc+a'=0$, we would find that the Einstein constant has to vanish, so if we assume that $\Lambda \neq 0$, we must have $-h c'+ch'=0$, which implies in turn that $h(t)=h_0 c(t)$. However, since $h(t_0)=0$, then $h(t)=0$. 

\noindent 
Setting $h=0$ in Proposition \ref{prop:riccitime}, on imposing the Einstein condition $\ric^g=\Lambda g$ we can solve for $a''(t), b''(t), c''(t)$ and $a'(t)$ and obtain:
\begin{eqnarray}
\label{eq:firstariem}
a'(t)&=&-\frac{k^2 b^3 c^3+a^2(\varepsilon \Lambda b c+b' c')}{a(cb'+b c')}\, , \\
\nonumber
a''(t)&=&\frac{2k^4 b^7 c^7 +k^2 a^2 b^3 c^3(-3c^2(b')^2-2bc b' c'+b^2(4 \varepsilon \Lambda c^2-3 (c')^2)}{a^3 b c(cb'+c' b)^2}\\ \label{eq:secondariem}& & +\frac{a^4 (2 \Lambda^2 b^3 c^3-c^2 (b')^3 c'-b^2 b' c'(-4 \varepsilon \Lambda c^2+(c')^2))}{a^3 b c(cb'+c' b)^2}\,,\\
\label{eq:secondbriem}
b''(t) &=& \frac{k^2 c^3 b^4(2bc'+cb')+a^2(b^2 (c')^2b'+2c^2(b')^3+bcc'(\varepsilon\Lambda b^2+2(b')^2))}{a^2 b c (cb'+bc')} \,,\\
\label{eq:secondcriem}
c''(t) &=&\frac{k^2 b^3 c^4(2cb'+bc')+a^2(c^2 (b')^2c'+2b^2(c')^3+bcb'(\varepsilon\Lambda c^2+2(c')^2))}{a^2 b c (cb'+bc')} \,.
\end{eqnarray}
More precisely, these equations should be written with the $(cb'+bc')$-factors on the left-hand side, to avoid possible zeros of the denominator, 
which we will not do to keep the formulas simple. The same comments apply to the formulas below.
On substituting these results into the Weyl self-duality tensor, we encounter:
\begin{eqnarray*}
\mathcal{W}^g(\fre_1^{t} ,\fre_2^{t} ,\fre_1^{t} ,\fre_2^{t} )&=&\frac{-3k^3 b^5 c^4+3k^2 a b^3c^2(2cb'+bc')-3ka^2bc(\varepsilon \Lambda b^2 c+c (b')^2+2bb'c')}{3a^3 b(bc'+cb')}\\& &+ \frac{2 \varepsilon \Lambda b c b'-\varepsilon \Lambda b^2 c'+3 (b')^2 c'}{3  b(bc'+cb')}\,.
\end{eqnarray*}
From here one can solve for $\Lambda$ and obtain: %-k^3 b^5 c^4+2 k^2 a b^3c^3 b'-ka^2 bc^2 (b')^2+k^2 ab^4 c^2 c'-2k a^2 b^2 c b' c'+ a^3 (b')^2 c'
\begin{eqnarray}
\Lambda=3 \frac{(kb^2c-ab')^2(-k bc^2+a c')}{ \varepsilon a^2 b(3k b^2 c^2-2 a c b'+ a bc')}\,.
\label{eq:lambdariem}
\end{eqnarray}
Taking this result into the rest of the components of $\mathcal{W}^g$, we find in particular:
\begin{eqnarray*}
\mathcal{W}^g(\p_t ,\fre_1^{t} ,\partial_t ,\fre_1^{t} )=\frac{\varepsilon (c b'-b c')(-k b^2 c+a b')(-k c^2 b +a c')}{a bc (3 k b^2 c^2-2  ac b'+ a b c')}\,.
\end{eqnarray*}
It follows that the first term, the second or the third term in brackets vanishes. If the second  or third one is identically zero, then we find that $\Lambda=0$, upon substitution in \eqref{eq:lambdariem}. Since we are assuming non-zero Einstein constant, we discard this possibility and then $c' b=b' c$, which in turn implies that $b=c$ since $b(t_0)=c(t_0)=1$. Imposing this condition, we find that all components of $\mathcal{W}^g$ vanish identically. Finally, we observe that \eqref{eq:lambdariem} can be simplified to take the form:
\begin{equation}
\Lambda=\frac{3 (k b^3-ab')^3}{\varepsilon a^2 b^2(-3k b^3+a b')}\,.
\end{equation}
Upon use of this expression, its first time derivative as well as equation \eqref{eq:firstariem} we check that \eqref{eq:secondariem},  \eqref{eq:secondbriem} and  \eqref{eq:secondcriem} are satisfied as well and we conclude.
%Up to choice of orientation, on substituting this result the tensor $\mathcal{W}$ vanishes if and only if (we just focus on real solutions):
%\begin{eqnarray}
%b'(t)&=& \frac{k b^3}{a}+\frac{\varepsilon \Lambda a^4 b}{(a^2(-9k \Lambda b^2 + \sqrt{\Lambda^2(\varepsilon\Lambda a^2+81 k^2 b^4)})^{1/3}}\\&-&(a^8 b^3(-9k \Lambda b^2 + \sqrt{\Lambda^2(\varepsilon\Lambda a^2+81 k^2 b^4)} )^{1/3}\, ,\\
%c(t)&=&b(t)\, , \quad s= \varepsilon\,.
%\end{eqnarray}
\end{proof}

\begin{prop}\label{Weyl:prop}
Let $(M,g)$ be a (timelike) quaternionic (para)K\"ahler Heisenberg four-manifold. Then:
\begin{itemize}
\item The eigenvalues of the Weyl tensor, understood as a symmetric endomorphism of the bundle of two-forms, are given by $(-2\nu, \nu , \nu)$, where:
\begin{equation*}
\nu=\frac{16 k^3 b^7}{\varepsilon a^3(-3k b^3+a b')}\,.
\end{equation*}
\item $(M,g)$ is conformally K\"ahler for two complex structures with opposite orientations.
\end{itemize}
\end{prop}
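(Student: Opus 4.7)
The plan is to first apply Proposition~\ref{prop:edosqk} to reduce the ansatz \eqref{eq:ansriemtsim} to the case $c=b$, $h=0$, so that the metric reads
\begin{equation*}
g = \varepsilon\,\d t^2 + \varepsilon\, \fre_t^1\otimes \fre_t^1 + \fre_t^2\otimes \fre_t^2 + \fre_t^3\otimes \fre_t^3,
\end{equation*}
with $\fre_t^1 = a^{-1}\fre^1_{t_0}$ and $\fre_t^i = b^{-1}\fre^i_{t_0}$ for $i=2,3$. Half-conformal flatness forces the Weyl tensor to reduce to its self-dual component $W^+$ on $\Lambda^2_+$, so the first statement is equivalent to diagonalizing this trace-free symmetric endomorphism of the rank-three bundle $\Lambda^2_+$.

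To carry this out, I would work in the self-dual basis built from the pseudo-orthonormal coframe $\{\d t,\fre_t^i\}$,
\begin{equation*}
\omega_1 = \d t \wedge \fre_t^1 + \fre_t^2\wedge \fre_t^3,\qquad \omega_2 = \d t \wedge \fre_t^2 + \varepsilon\, \fre_t^3\wedge \fre_t^1,\qquad \omega_3 = \d t \wedge \fre_t^3 + \varepsilon\, \fre_t^1\wedge \fre_t^2,
\end{equation*}
expecting, in view of the distinguished role of the Heisenberg centre $\fre_t^1$, that $\omega_1$ is the eigenform for the simple eigenvalue $-2\nu$ while $\omega_2,\omega_3$ span the double eigenspace. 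A direct (computer-assisted \cite{M}) evaluation of $W^+$ in this basis, eliminating $a'$, $a''$ and $b''$ by means of \eqref{eq:edosqk} and its time-derivative so that everything is expressed in terms of $a$, $b$, $b'$ alone, should then yield a diagonal matrix with the claimed spectrum $(-2\nu,\nu,\nu)$ and the stated value of $\nu$; trace-freeness of $W^+$ provides a built-in consistency check.

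For the second statement I would invoke Derdzi\'nski's theorem, which extends to neutral signature with ``K\"ahler'' replaced by ``K\"ahler or paraK\"ahler'' according to the causal character of the eigenform: on a self-dual Einstein four-manifold whose self-dual Weyl tensor has a non-vanishing double eigenvalue, the conformal rescaling $\tilde g = |\nu|^{-2/3} g$ is (para)K\"ahler, with fundamental two-form proportional to $\omega_1$. This produces the first complex structure, adapted to the original orientation. For the second one, compatible with the opposite orientation, I would exploit the fact that $W^-$ vanishes identically and consider the candidate anti-self-dual two-form $\omega_1^- = \d t \wedge \fre_t^1 - \fre_t^2\wedge \fre_t^3$ (which is self-dual with respect to the reversed orientation), then determine a conformal factor $\phi$ such that $\d(\phi\,\omega_1^-)=0$ by a short explicit computation using the Heisenberg bracket $[\fre_2^{t_0},\fre_3^{t_0}]=-2k\fre_1^{t_0}$ and the reduction \eqref{eq:edosqk}.

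The main obstacle is really the explicit computation of $W^+$ in the first step: although routine in principle, the intermediate expressions are substantial and obtaining the compact formula for $\nu$ requires systematic use of the first-order reduction~\eqref{eq:edosqk} to trade higher derivatives of $a$ and $b$ for polynomial expressions in $a,b,b'$. Once the double-eigenvalue structure is in place, the conformally (para)K\"ahler property reduces to an application of Derdzi\'nski-type arguments on one side and a one-line closedness verification in the adapted frame on the other.
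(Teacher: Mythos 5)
Your treatment of the first bullet point coincides with the paper's: reduce to $c=b$, $h=0$ via Proposition~\ref{prop:edosqk}, evaluate the Weyl endomorphism on the self-dual basis $\omega_i=\d t\wedge\fre_t^i+\star(\d t\wedge\fre_t^i)$, and read off the spectrum $(-2\nu,\nu,\nu)$ with $\omega_1$ the simple eigenform. That part is fine.

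The second bullet point has a genuine gap. For the complex structure adapted to the original orientation you invoke Derdzi\'nski's theorem ``extended to neutral signature''; no such extension is proved (or cited) here, and the paper deliberately does \emph{not} rely on it -- it instead proves integrability of $J_1=g^{-1}\omega_1$ directly, by checking that the rescaled forms $ab\,\omega_2$ and $ab\,\omega_3$ are closed and applying the Hitchin-type Lemma~\ref{int:lem} ($J_2,J_3$ anticommuting with closed fundamental forms $\Rightarrow$ $J_1=J_2J_3$ integrable). More seriously, for the structure adapted to the \emph{opposite} orientation your argument breaks down: with the reversed orientation the self-dual Weyl tensor is the original $\mathrm{W}^-$, which vanishes identically, so Derdzi\'nski's theorem (whose conformal factor is $\vert \mathrm{W}^+\vert^{2/3}$) gives nothing there; and producing a conformal factor $\phi$ with $\d(\phi\,\omega_1^-)=0$ only yields an \emph{almost} (pseudo-)K\"ahler structure. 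Closedness of the fundamental form does not imply integrability of the associated almost complex structure, so ``conformally K\"ahler for the opposite orientation'' is not established. You need to repeat the integrability argument for the sign-reversed triple $\omega_i^-=\d t\wedge\fre_t^i-\star(\d t\wedge\fre_t^i)$ (closedness of the rescaled $\omega_2^-,\omega_3^-$ plus Lemma~\ref{int:lem}), which is exactly what the paper does.
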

\begin{proof}
Define the following triplet of self-dual two-forms:
\begin{equation}\label{SD:eq}
\omega_i= \d t \wedge \fre_t^i+ \star (\d t \wedge \fre_t^i)\, ,
\end{equation}
where $\star$ denotes the Hodge dual operation. Interpreting the Weyl tensor as a symmetric endomorphism of the bundle of two-forms in the canonical way\footnote{In components, $(\mathrm{W}^g(\omega_i))_{\mu \nu}=(\mathrm{W}^g)_{\mu \nu \rho \sigma} \omega_i^{\rho \sigma}$.}, we observe that:
\begin{equation*}
\mathrm{W}^g(\omega_1)=-2 \nu \omega_1 \, , \quad \mathrm{W}^g(\omega_2)= \nu \omega_2\, ,\quad \mathrm{W}^g(\omega_3)= \nu \omega_3\,.
\end{equation*}
This proves the first part of the proposition. Regarding the second one, we first note that the rescaled two-form
\begin{equation*}
\tilde{\omega}_1=\vert \mathrm{W}^g \vert_g^{2/3} \omega_1
\end{equation*}
is closed and satisfies that $\vert \tilde{\omega}_1 \vert_{\tilde{g}}^2=4$ with respect to the rescaled metric $\tilde{g}=\vert \mathrm{W}^g \vert_g^{2/3} g$, in agreement with 
\cite{D,AG} in the Riemannian case. We claim that $\tilde{g}$ is pseudo-K\"ahler with the K\"ahler form $\tilde{\omega}_1$. To prove the integrability of the almost complex structure $J_1 = \tilde{g}^{-1}\tilde{\omega}_1=g^{-1}\omega_1$ we use that the following rescaled two-forms 
\begin{equation*}
\tilde{\omega}_2=a b \, \omega_2\, , \quad \tilde{\omega}_3=a b \, \omega_3\,
\end{equation*}
are closed.  This is a consequence of Lemma~\ref{int:lem}, a simple generalization of the Hitchin lemma \cite{H}. By introducing a relative sign in (\ref{SD:eq}) 
we can likewise obtain a conformally K\"ahler structure $(M,g,J_1')$ for the opposite orientation. (So contrary to $J_1$ the complex structure $J_1'$ is not subordinate to the 
(para)quaternionic structure $Q = \mathrm{span} \{ J_1, J_2, J_3\}$.)
\end{proof}
\begin{lem} \label{int:lem}Let $(M,g)$ be a pseudo-Riemannian manifold endowed with two anti-commuting skew-symmetric endomorphism fields 
$J_2$, $J_3$ such that $J_2^2=J_3^2=\varepsilon \mathbf{1}$, where $\varepsilon = \pm 1$.  Assume that the two-forms $\omega_i = g\circ J_i $ are closed 
for $i=2,3$. Then $J_1 := J_2J_3$ is an integrable skew-symmetric complex structure. 
\end{lem}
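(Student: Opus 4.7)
The plan is a direct generalization of the classical Hitchin lemma, handling both $\varepsilon=\pm 1$ simultaneously. The first step is algebraic: using $J_2J_3 = -J_3J_2$ and $J_2^2 = J_3^2 = \varepsilon\mathbf{1}$, one obtains
\[
J_1^2 = (J_2J_3)^2 = -J_2^2 J_3^2 = -\varepsilon^2\mathbf{1} = -\mathbf{1},
\]
so $J_1$ is an almost complex structure. Skew-symmetry of $J_1$ with respect to $g$ follows immediately from skew-symmetry of $J_2,J_3$ and their anti-commutation via
\[
g(J_1X,Y) = g(J_2J_3X,Y) = -g(J_3X,J_2Y) = g(X,J_3J_2Y) = -g(X,J_1Y).
\]

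Next I would build a closed non-degenerate $(2,0)$-form with respect to $J_1$. Decomposing the complexified tangent bundle as $T^{\mathbb{C}}M = T^{1,0}\oplus T^{0,1}$ into the $\pm i$-eigenspaces of $J_1$, the identities $J_2J_1 = \varepsilon J_3$ and $J_3J_1 = -\varepsilon J_2$ imply that for $X\in T^{1,0}$ one has $J_3 X = i\varepsilon\,J_2 X$, so $J_2$ is an isomorphism $T^{1,0}\to T^{0,1}$. A short calculation then shows that the complex two-form
\[
\Omega := \omega_2 - i\varepsilon\,\omega_3
\]
is of pure type $(2,0)$ on $(M,J_1)$---explicitly $\Omega(J_1X,Y) = i\,\Omega(X,Y)$---and restricts on $T^{1,0}\otimes T^{1,0}$ to $2\,g(J_2\,\cdot\,,\,\cdot\,)$, which is non-degenerate because $T^{1,0}$ and $T^{0,1}$ are isotropic complements inside $T^{\mathbb{C}}M$ and $g$ pairs them non-degenerately. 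The hypotheses $d\omega_2 = d\omega_3 = 0$ give $d\Omega = 0$.

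Finally, I would extract integrability. In a local $(1,0)$-coframe $\theta^1,\dots,\theta^n$ with $2n = \dim M$, write $\Omega = \tfrac{1}{2}\sum_{i,j}\Omega_{ij}\,\theta^i\wedge\theta^j$. A bidegree expansion yields
\[
(d\Omega)^{(1,2)} = \sum_{i,j}\Omega_{ij}\,\theta^j\wedge(d\theta^i)^{(0,2)}.
\]
Its vanishing, combined with linear independence of the $\theta^j$ and non-degeneracy of the antisymmetric matrix $(\Omega_{ij})$, forces $(d\theta^i)^{(0,2)} = 0$ for every $i$, which is exactly the Newlander--Nirenberg integrability criterion for $J_1$. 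I expect the main technical care to be in the bidegree bookkeeping of $(d\Omega)^{(1,2)}$ and in tracking the sign $\varepsilon$ when identifying which of $\omega_2\pm i\omega_3$ is of type $(2,0)$; once these are sorted, the rest of the argument is uniform in $\varepsilon$.
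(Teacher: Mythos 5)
Your proof is correct and follows essentially the same route as the paper: both verify that $J_1$ is a skew-symmetric almost complex structure, form the same $(2,0)$-form $\Omega=\omega_2-i\varepsilon\,\omega_3$, and deduce integrability from $d\Omega=0$ together with non-degeneracy. The only (cosmetic) difference is the last step, where you kill $(d\theta^i)^{(0,2)}$ via a coframe bidegree argument while the paper evaluates $d\Omega(\bar X,\bar Y,Z)=-\Omega([\bar X,\bar Y],Z)$ directly to get involutivity of the $(0,1)$-distribution; these are dual formulations of the same argument.
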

\begin{proof} It is trivial to check that $J_1$ is a skew-symmetric almost complex structure. To prove the integrability 
we use that $\Omega = \omega_2 - i\varepsilon  \omega_3$ is of type $(2,0)$ with respect to $J_1$ and non-degenerate (as a complex bilinear form). 
Due to these properties, it suffices to show that $\Omega ([\bar{X}, \bar{Y}], Z) =0$, for all vector fields $X, Y, Z$ of type $(1,0)$, since this implies 
the involutivity of the $(-i)$-eigendistribution of $J_1$. 
This is an immediate consequence of $d\Omega=0$, since $(d\Omega)(\bar{X},\bar{Y},Z) = -\Omega ([\bar{X},\bar{Y}],Z)$.  
\end{proof}

\begin{rem}We have shown in the proof of Proposition~\ref{Weyl:prop} that $(M,g)$ is not only conformally K\"ahler but admits two almost (para)K\"ahler 
structures $\tilde{\omega}_2$ and $\tilde{\omega}_3$ compatible with a second conformally rescaled metric $g' := a b \, g$, such that 
$\vert \tilde{\omega}_2 \vert_{g'}^2=\vert \tilde{\omega}_3 \vert_{g'}^2=4 \varepsilon$. 
\end{rem}

\begin{rem}\label{remarktolemma}Note that Lemma \ref{int:lem} can be easily adapted to include the case $J_2^2=-J_3^2 = \varepsilon \mathbf{1}$, $\varepsilon= \pm 1$. The conclusion 
is then that $J_1$ is an integrable skew-symmetric paracomplex structure. In fact, in that case one can consider $\Omega = \omega_2 + e \varepsilon  \omega_3$, which takes values in the ring $\mathbb{R}[e]\cong \mathbb{R} \oplus \mathbb{R}$ generated by $e$ with the relation $e^2=1$ (the ring of paracomplex numbers).  Note that $\Omega$ has type $(2,0)$ in the sense that 
$\Omega (J_1 \cdot , \cdot ) = \Omega (\cdot , J_1 \cdot ) = 
e \Omega$ and is non-degenerate in the sense that a real vector $X$ satisfies $\Omega (X+eJ_1X,Y+eJ_1Y)=0$ for all 
real vectors $Y$ if and only if $X=0$. 
\end{rem}

The second equation in \eqref{eq:edosqk} is a cubic equation for $b'(t)$, and depending on the values of $k$ and $\Lambda$, we may have one or more real solutions. Define\footnote{Given a complex number $z \in \mathbb{C}$, it has always three cubic roots. When we write $z^{1/3}$ we will mean by convention the cubic root $z^{1/3}=\vert z \vert^{1/3} e^{i/3\,  \mathrm{arg}(z)}$.}:
\begin{eqnarray}
\nonumber
\mathcal{B}_l&=&\frac{1}{3a^3}\left ( 3  k a^2 b^3-\frac{  \varepsilon \Lambda a^6 b^2 e^{-2i \pi (l-1)/3}}{(9  k \varepsilon \Lambda a^8 b^5 + \sqrt{\Lambda^2 a^{16} b^6(\varepsilon \Lambda a^2+81 k^2 b^4) })^{1/3}}\right. \\& & \label{eq:desbriem1} +e^{2i \pi (l-1)/3} (9  k \varepsilon \Lambda a^8 b^5 + \sqrt{\Lambda^2 a^{16} b^6(\varepsilon \Lambda a^2+81 k^2 b^4) })^{1/3}\Bigg )\, , \quad l=1,2,3\,.
\end{eqnarray}
\begin{prop}
\label{prop:solsbrt}
Let $(M,g)$ be a (timelike) quaternionic (para)K\"ahler Heisenberg four-manifold. Then $a'=-\frac{k^2 b^6 +a^2(\varepsilon \Lambda b^2+(b')^2)}{2ab b'}$ and:
\begin{itemize}
\item If $\varepsilon\Lambda >-81 k^2$,  $b'=\mathcal{B}_1$.
\item If $\varepsilon\Lambda \leq -81 k^2$, there are three solutions for $b'$ obtained by setting $l=1,2,3$ in \eqref{eq:desbriem1}, $b'_l=\mathcal{B}_l$.
\end{itemize}
\end{prop}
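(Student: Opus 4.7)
The formula for $a'$ is an immediate restatement of the first identity in \eqref{eq:edosqk} of Proposition~\ref{prop:edosqk}, so the real content of the proposition is to solve the second identity of \eqref{eq:edosqk}, a cubic equation in $b'$, for all its real roots.

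The plan is to reduce this cubic to depressed form via the substitution $y = kb^3 - ab'$. Since $-3kb^3 + ab' = -(2kb^3 + y)$, clearing denominators in the second identity of \eqref{eq:edosqk} yields $y^3 + py + q = 0$ with $p = \varepsilon\Lambda a^2 b^2/3$ and $q = 2\varepsilon\Lambda k a^2 b^5/3$. A direct calculation gives
\begin{equation*}
\frac{q^2}{4} + \frac{p^3}{27} \;=\; \frac{\Lambda^2 a^4 b^6}{729}\bigl(\varepsilon\Lambda a^2 + 81\, k^2 b^4\bigr),
\end{equation*}
so the sign of the Cardano discriminant at the initial time $t_0$, where $a(t_0)=b(t_0)=1$, is controlled by the sign of $\varepsilon\Lambda + 81k^2$. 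This is precisely the dichotomy appearing in the statement.

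The next step is to apply Cardano's formula, writing the three candidate roots as $y_l = \omega^{l-1}\sqrt[3]{A} + \omega^{-(l-1)}\sqrt[3]{B}$ with $\omega = e^{2\pi i /3}$, where $A,B = -q/2 \pm \sqrt{q^2/4 + p^3/27}$ and the cube-root branches are chosen so that $\sqrt[3]{A}\sqrt[3]{B} = -p/3$. I would then use this constraint to eliminate $\sqrt[3]{B}$ in favour of $(-p/3)/\sqrt[3]{A}$, substitute back $b'_l = (kb^3 - y_l)/a$, and simplify to reproduce the explicit expression $\mathcal{B}_l$ of \eqref{eq:desbriem1}. The standard reality analysis for depressed cubics then yields the two cases of the statement: if $\varepsilon\Lambda + 81k^2 > 0$ the discriminant is strictly negative at $t_0$, so only the branch $l=1$ is real (the other two being complex conjugates); if $\varepsilon\Lambda + 81k^2 \leq 0$ one is in the \emph{casus irreducibilis} and all three $\mathcal{B}_l$ are real, the imaginary parts cancelling by the symmetry of the roots of unity.

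The step I expect to be the main obstacle is purely algebraic bookkeeping: matching the precise form of \eqref{eq:desbriem1}, in particular the scalar factors pulled inside the cube and square roots and the specific powers of $a$ and $b$ that appear there, requires a consistent choice of cube-root branch together with some patient rewriting. Once this identification is in place, the reality and counting statements follow immediately from the sign analysis of the discriminant carried out above.
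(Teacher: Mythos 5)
Your proposal is correct and follows essentially the same route as the paper: the substitution $\beta=kb^3-ab'$ (your $y$), the resulting depressed cubic $\beta^3+\tfrac{\varepsilon\Lambda a^2b^2}{3}\beta+\tfrac{2k\varepsilon\Lambda a^2b^5}{3}=0$, evaluation of the discriminant at $t_0$ giving the threshold $\varepsilon\Lambda=-81k^2$, and the standard one-versus-three real roots dichotomy. The only difference is that you sketch the Cardano computation needed to match the explicit form of $\mathcal{B}_l$, which the paper simply asserts.
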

\begin{proof}
The result for $a'$ was derived in Proposition \ref{prop:edosqk}. If we define now $\beta=k b^3- a b'$, the second equation in \eqref{eq:edosqk} is equivalent:
\begin{equation}
\beta^3+\frac{\varepsilon a^2 b^2 \Lambda}{3} \beta+ \frac{2k \varepsilon}{3} a^2 b^5 \Lambda=0\,.
\end{equation}
At $t_0$, this equation reads:
\begin{equation}
\beta^3+\frac{\varepsilon\Lambda}{3} \beta+ \frac{2k \varepsilon}{3} \Lambda=0\,.
\label{eq:cubicpolrt}
\end{equation}
The discriminant $\Delta$ of this equation takes the form:
\begin{equation}
\Delta=-\frac{4}{27} \Lambda^2  \left (\varepsilon \Lambda +81 k^2  \right) \,.
\end{equation}
By standard theory of cubic equations, if $\Delta<0$ then there is just one real solution to \eqref{eq:cubicpolrt} and if $\Delta \geq 0$ there exist three (maybe multiple) real roots. Thus if $\varepsilon \Lambda >-81 k^2$, there is only a unique real solution to \eqref{eq:cubicpolrt} (given by\footnote{It can be checked that  the expression \eqref{eq:desbriem1} with $l=1$ takes real values for all $\varepsilon\Lambda \neq 0$.} \eqref{eq:desbriem1} with $l=1$) and therefore there is a unique solution for $b'(t_0)$, which in turn produces one real solution for $(a(t), b(t))$ defined on an appropriate interval $\mathcal{I}$. If $\varepsilon \Lambda  \leq -81 k^2$, there are three real roots (with at least two identical roots when the equality holds) to \eqref{eq:cubicpolrt} and therefore there are three real solutions for $b'(t_0)$ (given by \eqref{eq:desbriem1}, $l=1,2,3$). These yield three real solutions for $(a(t), b(t))$, each defined on intervals $\mathcal{I}_l$.
\end{proof}
Proposition \ref{prop:solsbrt} provides a system of ordinary differential equations for $(a(t),b(t))$ with the initial condition $a(t_0)=b(t_0)=1$. By virtue of the theorem of existence and uniqueness of ordinary differential equations, it is enough to find one solution for each possible value of $k$ and $\varepsilon\Lambda$, since it will be unique\footnote{It can be seen that the conditions for the existence and uniqueness theorem to hold are satisfied, at least locally around the initial condition. Note that the derivatives are already solved at one side of the equations, which facilitates this check.}. In fact, we find it is convenient to split our study into three different possibilities according to the value of $\varepsilon \Lambda$, and we will distinguish between stationary, negative and positive (timelike) quaternionic (para)K\"ahler Heisenberg four-manifolds (to be defined below).

\subsubsection{Stationary (timelike) quaternionic (para)K\"ahler Heisenberg four-manifolds}

\begin{mydef}
\label{def:stationary}
A (timelike)  quaternionic (para)K\"ahler Heisenberg four-manifold is said to be stationary if $\varepsilon\Lambda=-6k^2$.
\end{mydef}
The name stationary comes from the fact that if we set $b=c$ and $h=0$ in the expression for the Ricci tensor given at Proposition \eqref{prop:riccitime}, then the Einstein condition reads:
\begin{eqnarray}
\label{eq:eins1}
\varepsilon \Lambda&=&\mu'+2 \lambda'-\mu^2-2\lambda^2\, , \\
\label{eq:eins2}
\varepsilon \Lambda &=&2 \frac{k^2 b^4}{a^2}- ( -\mu'+2\mu \lambda+\mu^2)  \, , \\
\label{eq:eins3}
\Lambda &=&-2 \varepsilon \frac{k^2 b^4}{a^2}-\varepsilon (-\lambda'+\lambda \mu+2\lambda^2)  \, , 
%\\
%\label{eq:eins4}
% \Lambda &=&-2 \varepsilon \frac{k^2 b^4}{a^2}-\varepsilon (-\lambda'+2\lambda^2+\lambda \mu) \, ,
\end{eqnarray}
where $\mu=\log(a)'$ and $\lambda=\log(b)'$. If we set $\mu'=\lambda'=0$ we find that $\varepsilon \Lambda=-6k^2$, hence the name stationary. 
\begin{prop}
\label{prop:statsol}
All stationary (timelike) quaternionic (para)K\"ahler Heisenberg four-man-\\ifolds $(M,g)$ are given by:
\begin{equation}
a=e^{-2 k (t-t_0)}\, , \quad b=e^{- k (t-t_0)}\,.
\label{eq:statsol}
\end{equation}
They are  isometric to an open orbit of the solvable Iwasawa subgroup of $\mathrm{SU} (1,2)\cong \mathrm{SU}(2,1)$ on the symmetric space 
\begin{equation} \label{space:eq} \dfrac{\mathrm{SU}\left (\dfrac{3+\varepsilon}{2},\dfrac{3-\varepsilon}{2}\right )}{\mathrm{S}\left (\mathrm{U}(1) \times \mathrm{U}\left (\dfrac{3+\varepsilon}{2},\dfrac{1-\varepsilon}{2}\right )\right )},\end{equation} where $\mathrm{U}(p,q)$ denotes the (pseudo-)unitary group 
of the Hermitian sesquilinear form of index $q$. Moreover, when $(M,g)$ is Riemannian (resp. neutral signature) it is complete (resp. incomplete). 

%When $\varepsilon_1=\varepsilon_2=1$, the solution is locally isometric to the symmetric space $\dfrac{\mathrm{SU}(2,1)}{\mathrm{S}(\mathrm{U}(1) \times \mathrm{U}(2))}$, while if $\varepsilon_1=\varepsilon_2=-1$, the solution is locally isometric to the symmetric space $\dfrac{\mathrm{SU}(1,2)}{\mathrm{S}(\mathrm{U}(1) \times \mathrm{U}(1,1))}$, where $\mathrm{SU}(p,q)$ denotes the special unitary group which leaves invariant the diagonal metric with $p$ positive signs and $q$ negative signs.
\end{prop}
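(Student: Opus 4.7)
The plan is to first integrate the ODE system of Proposition~\ref{prop:edosqk} in the stationary regime, then produce the identification with a locally symmetric space by a Lie-algebraic argument, and finally settle (in)completeness by pulling back to the ambient symmetric space.

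The reduction \eqref{eq:edosqk} combined with the stationarity condition $\varepsilon\Lambda=-6k^2$ suggests trying the constant logarithmic-derivative ansatz $\mu:=(\log a)'=\mathrm{const}$, $\lambda:=(\log b)'=\mathrm{const}$. Substituting into \eqref{eq:eins1} and \eqref{eq:eins2} forces $\mu^2+2\lambda^2=6k^2$ together with $b^4/a^2\equiv 1$, which after using the initial conditions $a(t_0)=b(t_0)=1$ selects $\mu=-2k$ and $\lambda=-k$ (the opposite sign being discarded as it would violate the first equation in \eqref{eq:edosqk}), producing exactly \eqref{eq:statsol}. Uniqueness is then automatic from Proposition~\ref{prop:solsbrt}: for $\varepsilon\Lambda=-6k^2>-81k^2$ there is a single real branch $b'=\mathcal{B}_1$, so the system becomes a first-order autonomous ODE in $(a,b)$ to which the standard existence-uniqueness theorem applies.

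For the geometric identification, I observe that with the exponential solutions \eqref{eq:statsol} the vector fields $\{\partial_t,\fre_1^t,\fre_2^t,\fre_3^t\}$ close under Lie bracket into a four-dimensional solvable algebra $\mathfrak{s}=\mathfrak{a}\ltimes\mathfrak{h}$, where $\mathfrak{a}=\mathbb{R}\partial_t$ acts on the Heisenberg nilradical with eigenvalues $-2k$ on the center $\mathcal{Z}$ and $-k$ on the complementary plane, and where $g$ is manifestly left-invariant on the corresponding simply-connected solvable group $S$. This is precisely the restricted-root data of the Iwasawa component $\mathfrak{a}\oplus\mathfrak{n}$ of $\mathfrak{su}(1,2)\cong\mathfrak{su}(2,1)$, so $S$ embeds as the Iwasawa subgroup of $\mathrm{SU}(1,2)$. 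Since the symmetric space in \eqref{space:eq} admits, up to scale, a unique $G$-invariant quaternionic (para)K\"ahler metric and $S$ acts isometrically with an orbit of full dimension, $(M,g)$ is $S$-equivariantly isometric to an open $S$-orbit on \eqref{space:eq}.

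Completeness is then a corollary of this identification. In the Riemannian case $\varepsilon=1$ the Iwasawa group $AN$ acts simply transitively on $\mathbb{C}H^2$, so the open orbit is all of $\mathbb{C}H^2$, and $g$, being a positive multiple of the Bergman metric, is complete. In the neutral case $\varepsilon=-1$ the open $S$-orbit is a proper subset of the para-Hermitian symmetric space, and I would produce incompleteness by exhibiting a geodesic that reaches the boundary of the orbit in finite affine time; the natural tool here is the conservation laws coming from the Heisenberg Killing fields, which reduce the geodesic equation to a single ODE in $t$ whose affine parameter is seen to stay bounded. I expect this last verification to be the main technical obstacle, since softer reasoning based on completeness of the ambient symmetric space runs in the wrong direction: one really has to track a specific escaping geodesic, and will likely need to choose it to be lightlike and transverse to the Heisenberg orbits so that the reduction becomes sharp.
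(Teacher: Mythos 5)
Your ODE analysis and the uniqueness argument via Proposition~\ref{prop:solsbrt} essentially match the paper's, up to one slip: the rejected branch $\mu=+2k$, $\lambda=+k$ in fact \emph{satisfies} the first equation of \eqref{eq:edosqk} (using $b^4=a^2$ one checks both sides equal $2ka$); what kills it is the \emph{second} equation of \eqref{eq:edosqk}, i.e.\ the self-duality constraint, which would force $\varepsilon\Lambda=0$ since $kb^3-ab'\equiv 0$ on that branch (equivalently, the Weyl tensor becomes anti-self-dual). The more substantial gap is in the identification with the symmetric space. Matching the bracket relations of $\{\partial_t,\fre_i^t\}$ with the Iwasawa data of $\mathfrak{su}(1,2)$ identifies the \emph{group} $S$, but not the \emph{metric}: the uniqueness you invoke is of the $G$-invariant metric on $G/K$, whereas $g$ is a priori only $S$-invariant, and $S$ carries many mutually non-isometric left-invariant metrics, only special ones being locally symmetric. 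You still need to show that $g$ coincides with the metric induced from $G/K$. The paper does this by verifying $\nabla \mathrm{R}^g=0$ directly and then comparing curvature tensors against the classification of quaternionic (para)K\"ahler symmetric spaces; some such curvature computation seems unavoidable in your route as well.

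On completeness, your Riemannian argument is fine, but the neutral-signature part is both unexecuted and based on a misjudgment: the ``soft'' argument does run in the right direction. A nonempty proper open subset $U$ of a connected pseudo-Riemannian manifold is automatically geodesically incomplete: pick $p\in\partial U$, a normal neighbourhood of $p$, and the radial geodesic from a point $q\in U$ of that neighbourhood to $p$; it leaves $U$ at finite affine parameter, so the corresponding maximal geodesic of $(U,g|_U)$ is defined only on a bounded interval. Properness of the orbit is purely topological: the orbit is diffeomorphic to $\mathbb{R}^4$, while $\mathrm{SU}(1,2)/\mathrm{S}(\mathrm{U}(1)\times\mathrm{U}(1,1))$ is homotopy equivalent to $S^2$, so the open orbit cannot be everything. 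This is exactly the paper's argument, and it spares you the explicit lightlike-geodesic computation that you yourself flag as the main technical obstacle.
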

\begin{proof}
Since  $\mu'=\lambda'=0$ implies $\varepsilon\Lambda=-6k^2$, let us start by assuming $\mu'=\lambda'=0$. Consistency then requires:
\begin{equation}
\left ( \frac{b^4}{a^2} \right)'=0\,.
\end{equation}
This implies that $\mu =2 \lambda$. On substituting in \eqref{eq:eins1} we encounter:
\begin{equation}
-6 \lambda^2=\varepsilon \Lambda\,,
\end{equation}
and hence $\lambda=\pm   \sqrt{\frac{-\varepsilon \Lambda}{6}}=\pm k$ (remember that $k>0$). By the above, $\mu=\pm  2 k$. We observe in turn that the other equations \eqref{eq:eins2}, \eqref{eq:eins3} are satisfied and therefore the solution is:
\begin{equation}
a=e^{\pm 2 k (t-t_0)}\, , \quad b=e^{\pm k (t-t_0)}\,,
\end{equation}
where we have already imposed that $a(t_0)=b(t_0)=1$. Finally, we find that that Weyl tensor is self-dual only if we pick up the minus\footnote{Choosing the plus sign, the Weyl tensor is antiself-dual.} sign above,  so we check that these solutions are indeed stationary (timelike) quaternionic (para)K\"ahler four-manifolds.

\noindent
 In fact, solution \eqref{eq:statsol} exhausts the list of stationary (timelike) quaternionic (para)K\"ahler four-manifolds, and the argument to prove this statement goes as follows. First, we note that Proposition \ref{prop:solsbrt} guarantees that if $\varepsilon\Lambda>-81k^2$, then the cubic equation for $b'$ in \eqref{eq:edosqk} has a unique real solution given by $b'=\mathcal{B}_0$. Consequently, this implies that we obtain a system of ODEs of the form $a'=f_1(a,b,\varepsilon\Lambda)$ and $b'=f_2(a,b,\varepsilon\Lambda)$, where $f_1$ and $f_2$ are smooth functions (at least, for $\varepsilon\Lambda=-6k^2$) as long as $a, b \neq 0$. Since the solution \eqref{eq:statsol} verifies that $a, b \neq 0$ for all $t \in \mathbb{R}$, we observe that \eqref{eq:statsol} represents the unique solution to the ODEs $a'=f_1(a,b,\varepsilon\Lambda)$ and $b'=f_2(a,b,\varepsilon\Lambda)$, completing thus the classification of stationary (timelike) quaternionic (para)K\"ahler four-manifolds.

\noindent
On the other hand, after some computations we find that these configurations  satisfy that $\nabla \mathrm{R}^g=0$, where $\mathrm{R}^g$ is the Riemann curvature tensor of $g$. In other words, the resulting spaces are quaternionic K\"ahler in the Riemannian case and quaternionic paraK\"ahler in the  neutral-signature case. Comparing to the classification of pseudo-Riemannian symmetric spaces of quaternionic K\"ahler type, see \cite{W} and \cite{AC,Krahe}, we conclude (comparing curvature tensors) that the resulting spaces are locally isometric to the symmetric spaces (\ref{space:eq}). More precisely, the solutions are locally isometric to a left-invariant metric on the simply transitive solvable Iwasawa subgroup of $\mathrm{SU}(1,2)$ and $\mathrm{SU}(2,1)$ when $\varepsilon=1$ and $\varepsilon=-1$, respectively. To see this it suffices to observe that the Heisenberg group is included in a four-dimensional group of isometries, which is precisely the above-mentioned Iwasawa group. In fact, the one-parameter group
$t\mapsto t+t_0$, $x\mapsto e^{ t_0}x$, $y\mapsto e^{ t_0}y$, $z\mapsto e^{2 t_0}z$ acts by isometries, enlarging the Heisenberg group by a one-parametric group of automorphisms to the aforementioned solvable group. Finally, in the Riemannian case the metric is complete, since the interval $\mathcal{I}$ of definition of $g$ can be extended to the whole real line $\mathbb{R}$, while in the neutral-signature case the metric is (geodesically) incomplete, because  $\dfrac{\mathrm{SU}\left (1,2\right )}{\mathrm{S}\left (\mathrm{U}(1) \times \mathrm{U}\left (1,1\right )\right )}$ is homotopically equivalent to $\mathrm{S}^2$ and hence it cannot be diffeomorphic to $\mathbb{R}^4$.
\end{proof}
%\begin{rem}
%By Proposition \ref{prop:statsol}, we have that $a=a_0 e^{2 \lambda t}$, $b=b_0 e^{\lambda t}$ and $c=c_0 e^{\lambda t}$, with $a_0, b_0$ and $c_0$ constants satisfying that $ a_0 ^2\Lambda=-6 \varepsilon k^2 b_0^2 c_0^2$. However, after appropriate time translations and rescalings of the left-invariant frame $\{e_1,e_2,e_3\}$ which preserve the Lie brackets, it can be seen that, up to constant conformal factor, the metric acquires the form of the solution \eqref{eq:statsol} with $a_0=b_0=c_0=1$ and $\Lambda=-6\varepsilon k^2 $. 
%\end{rem}
\begin{rem}
We have not used  the second equation in \eqref{eq:edosqk} together with equation \eqref{eq:desbriem1} because it was easier to directly obtain the stationary solutions from the Ricci tensor given at Proposition \eqref{prop:riccitime}. However, it is worth noting that the solution $a=e^{-2k(t-t_0)}$, $b=e^{-k(t-t_0)}$ does indeed solve the second equation in \eqref{eq:edosqk} and \eqref{eq:desbriem1}.
\end{rem}

\begin{rem}
In the Riemannian case, it is possible to see that the solution \eqref{eq:statsol} is completely equivalent to that obtained in \cite[Proposition 4.1]{CS}. Using the subindex CS to make reference to quantities of that article, by performing the identifications $e_{\mathrm{CS}}^i=k \mathfrak{e}_{t_0}^i$, $a_{\mathrm{CS}}(t)=k^{-2} a^{-2}(t)$, $b_{\mathrm{CS}}(t)=k^{-2} b^{-2}(t) $, $\mu_{\mathrm{CS}}=2 k$ and  $C_{\mathrm{CS}}=k^{-2} e^{-2kt_0}$, we conclude that our stationary quaternionic K\"ahler Heisenberg four-manifolds are equivalent to the stationary solutions of \cite{CS}.
\end{rem}

\subsubsection{Negative (timelike) quaternionic (para)K\"ahler Heisenberg four-manifolds}

\begin{mydef}
A (timelike) quaternionic (para)K\"ahler Heisenberg four-manifold is said to be negative if $\varepsilon\Lambda<0$ with  $\varepsilon\Lambda\neq -6 k^2$.
\end{mydef}
\begin{rem}
Observe that negative quaternionic K\"ahler Heisenberg four-manifolds have $\Lambda <0$ while the negative timelike quaternionic paraK\"ahler ones have $\Lambda >0$.  
\end{rem}
Let $\gamma \in \mathbb{R}$ and let $I$ be a connected component of the set:
\begin{equation}
\{ \rho \in \mathbb{R}\, \vert\, \rho \neq 0 , \rho+\gamma>0 \, \, \text{and}\, \, \rho+2\gamma>0\}\,.
\label{eq:intdefrho}
\end{equation}
Let $\rho : J \overset{\sim}{\rightarrow} I$, $t \mapsto \rho(t)$ be a maximal solution of the ordinary differential equation:
\begin{equation}
\rho'(t)=\sqrt{-\frac{2 \varepsilon\Lambda }{3}} \rho(t) \sqrt{\frac{\rho(t)+\gamma}{\rho(t)+2\gamma}}\,,
\label{eq:defrho}
\end{equation}
with the initial condition $\rho(0)=\rho_0$. Define:
\begin{equation}
\label{eq:absols}
A_{s}(\rho_0,\gamma)=s k \sqrt{-\frac{2 \varepsilon\Lambda }{3}} \rho(t) \sqrt{\frac{\rho(t)+2\gamma}{\rho(t)+\gamma}}\,, \quad  B_{s}(\rho_0,\gamma)=s \sqrt{-\frac{\varepsilon\Lambda }{3}} \frac{\rho(t)}{\sqrt{\rho(t)+2\gamma}}\,,
\end{equation}
with $s \in \mathbb{Z}_2$ a sign.
\begin{prop}
\label{prop:inivalues}
Let $(A_{s}(\rho_0,\gamma),B_{s}(\rho_0,\gamma))$ as in \eqref{eq:absols}. On the one hand, if $s=1$ and $\varepsilon\Lambda <0$, then there exists a unique pair $(\rho_1,\gamma_1)$ such that:
\begin{equation}
\label{eq:eqsci1}
A_{s}(\rho_1,\gamma_1)=B_{s}(\rho_1,\gamma_1)=1\,.
\end{equation}
On the other hand, if $s=-1$ and $\varepsilon\Lambda \leq -81 k^2$ there exist two pairs of solutions $(\rho_2,\gamma_2)$ $(\rho_3,\gamma_3)$ such that:
\begin{equation}
\label{eq:eqsci2}
A_{s}(\rho_2,\gamma_2)=B_{s}(\rho_2,\gamma_2)=1\,, \quad A_{s}(\rho_3,\gamma_3)=B_{s}(\rho_3,\gamma_3)=1\,.
\end{equation}
Such initial conditions $(\rho_l,\gamma_l)$ with $l=1,2,3$ are given by:
\begin{align}
\rho_l&=-\frac{e^{-(4-2l)i \pi/3}}{2k (\varepsilon \Lambda)^{1/3}(9k+\sqrt{81k^2+ \varepsilon\Lambda})^{1/3}}+e^{(4-2l)i \pi/3} \frac{(9k+\sqrt{81k^2+ \varepsilon\Lambda})^{1/3}}{2k (\varepsilon\Lambda)^{2/3}}\, ,\\
2\gamma_l&=-\rho_l\left ( 1+\frac{\varepsilon \Lambda}{3} \rho_0\right)\,.
\end{align}
\end{prop}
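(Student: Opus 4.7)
I would reduce the coupled system $A_s(\rho_0,\gamma)=B_s(\rho_0,\gamma)=1$ (evaluated at $t=0$, so $\rho(0)=\rho_0$) to a single cubic in $\rho_0$ alone, and then count and identify its real roots according to the sign of $s$. Squaring $B_s(\rho_0,\gamma)=1$ and using $s^2=1$ eliminates $s$ and yields
\[
-\frac{\varepsilon\Lambda}{3}\rho_0^2=\rho_0+2\gamma,
\]
which directly furnishes the announced formula $2\gamma=-\rho_0\bigl(1+\tfrac{\varepsilon\Lambda}{3}\rho_0\bigr)$. With this, $\rho_0+\gamma=\tfrac{\rho_0}{6}(3-\varepsilon\Lambda\rho_0)$, and substituting both into $A_s(\rho_0,\gamma)^2=1$ produces after a short simplification the single cubic
\[
4k^2\Lambda^2\rho_0^3+3\varepsilon\Lambda\rho_0-9=0,
\]
with depressed form $\rho_0^3+p\rho_0+q=0$, $p=\tfrac{3\varepsilon}{4k^2\Lambda}$, $q=-\tfrac{9}{4k^2\Lambda^2}$.

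\textbf{Explicit Cardano roots and discriminant.} Set
\[
\alpha=\frac{(9k+\sqrt{81k^2+\varepsilon\Lambda})^{1/3}}{2k(\varepsilon\Lambda)^{2/3}},\qquad \beta=-\frac{1}{2k(\varepsilon\Lambda)^{1/3}(9k+\sqrt{81k^2+\varepsilon\Lambda})^{1/3}}.
\]
Using the identity $(9k+\sqrt{81k^2+\varepsilon\Lambda})(9k-\sqrt{81k^2+\varepsilon\Lambda})=-\varepsilon\Lambda$, one verifies $\alpha\beta=-p/3$ and $\alpha^3+\beta^3=-q$, so Cardano yields the three roots
\[
\rho_l=e^{(4-2l)i\pi/3}\alpha+e^{-(4-2l)i\pi/3}\beta,\qquad l=1,2,3,
\]
which is exactly the stated formula for $\rho_l$. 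The discriminant $-4p^3-27q^2=-\tfrac{27}{16k^6\Lambda^4}(\varepsilon\Lambda+81k^2)$ is negative when $-81k^2<\varepsilon\Lambda<0$ (one real root, picked out by $l=1$ under the convention $z^{1/3}=|z|^{1/3}e^{i\arg(z)/3}$), zero when $\varepsilon\Lambda=-81k^2$, and positive when $\varepsilon\Lambda<-81k^2$ (three distinct real roots).

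\textbf{Matching signs, admissibility, and main obstacle.} Since $A_s=B_s=1>0$ and $k,\sqrt{-2\varepsilon\Lambda/3}>0$, one reads off $\mathrm{sgn}(\rho_0)=s$. Writing $f(\rho)=4k^2\Lambda^2\rho^3+3\varepsilon\Lambda\rho-9$, one has $f(0)=-9<0$ and, for $\varepsilon\Lambda<0$, two critical points at $\pm\rho_c$, $\rho_c=\tfrac{1}{2k}\sqrt{-\varepsilon/\Lambda}$, with local minimum at $+\rho_c$ and local maximum at $-\rho_c$. Hence $f$ has exactly one positive real root for every $\varepsilon\Lambda<0$, giving the unique pair $(\rho_1,\gamma_1)$ for $s=1$. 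The local maximum value $f(-\rho_c)=-2\varepsilon\Lambda\rho_c-9$ is non-negative precisely when $\varepsilon\Lambda\le-81k^2$, in which case $f$ has two negative real roots, giving the two pairs $(\rho_2,\gamma_2),(\rho_3,\gamma_3)$ for $s=-1$. Admissibility in the set \eqref{eq:intdefrho} is then automatic: $\rho_0+2\gamma=-\tfrac{\varepsilon\Lambda}{3}\rho_0^2>0$ since $\varepsilon\Lambda<0$, while $\rho_0+\gamma=\tfrac{\rho_0}{6}(3-\varepsilon\Lambda\rho_0)$ has the same sign as $\rho_0(3-\varepsilon\Lambda\rho_0)$, which can be read off at each root from the cubic identity $4k^2\Lambda^2\rho_0^3=9-3\varepsilon\Lambda\rho_0$. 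I expect the main technical obstacle to be the cube-root bookkeeping in the previous paragraph: one has to check carefully, with the paper's convention, that the three algebraic expressions $\rho_l$ are real in the claimed regimes and are correctly indexed so that $\rho_1$ corresponds to the positive root while $\rho_2,\rho_3$ correspond to the two negative ones.
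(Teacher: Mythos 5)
Your proposal is correct and follows essentially the same route as the paper: solve $B_s=1$ for $\gamma$, substitute into the squared equation $A_s^2=1$ to obtain the cubic $\rho_0^3+\tfrac{3}{4\varepsilon\Lambda k^2}\rho_0-\tfrac{9}{4k^2\Lambda^2}=0$, write its roots via Cardano, and then discard the spurious roots introduced by squaring according to the sign constraint $\mathrm{sgn}(\rho_0)=s$. Where the paper settles the root count, the admissibility conditions $\rho_0+\gamma>0$, $\rho_0+2\gamma>0$, and the reality/indexing of the $\rho_l$ by ``direct inspection'' (i.e.\ computer-assisted checks), you supply explicit elementary arguments (the critical-point analysis of $f$ and the identity $4k^2\Lambda^2\rho_0^3=9-3\varepsilon\Lambda\rho_0$), leaving only the same cube-root bookkeeping that the paper also treats by inspection.
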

\begin{proof}
Demanding $A_{s}(\rho_0,\gamma)=B_{s}(\rho_0,\gamma)=1$, we can solve for $\gamma$ in the last equation obtaining:
\begin{equation}
2\gamma=-\rho_0\left (1+\frac{\varepsilon\Lambda}{3}\rho_0 \right )\,.
\label{eq:gamma}
\end{equation}
Squaring the equation $A_{s}(\rho_0,\gamma)=1$ and substituting this result for $\gamma$, we arrive at the following cubic polynomial for $\rho_0$:
\begin{equation}
\rho_0^3+\frac{3}{4 \varepsilon \Lambda k^2}\rho_0-\frac{9}{4 k^2 \Lambda^2}=0\,.
\label{eq:cubicpol}
\end{equation} 
The (maybe complex) solutions to this cubic equations are:
\begin{equation}
\rho_l=-\frac{e^{-(4-2l)i \pi/3}}{2k (\varepsilon \Lambda)^{1/3}(9k+\sqrt{81k^2+ \varepsilon\Lambda})^{1/3}}+e^{(4-2l)i \pi/3} \frac{(9k+\sqrt{81k^2+ \varepsilon\Lambda})^{1/3}}{2k (\varepsilon\Lambda)^{2/3}}\, , \quad l=1,2,3\,.
\label{eq:solsprueba}
\end{equation}
At least one of the previous solution is real. However, not all real solutions of these equations need to satisfy a posteriori $A_{s}(\rho_0,\gamma)=B_{s}(\rho_0,\gamma)=1$, since in the process of arriving \eqref{eq:cubicpol} one has squared some expressions. Let us split this analysis between the cases $s=\pm 1$:
\begin{itemize}
\item If $s=1$  we find that there is a unique real solution of \eqref{eq:solsprueba} which in turn satisfies \eqref{eq:eqsci1} for all values of $\varepsilon\Lambda <0$. This solution is the one in \eqref{eq:solsprueba} for $l=1$, that we denote as $\rho_1$. Substituting this expression of $\rho_1$ in \eqref{eq:gamma} we obtain the unique solution $(\rho_1,\gamma_1)$. Also, \emph{a posteriori} we check that $\rho_1+2\gamma_1>0$, $\rho_1+\gamma_1> 0$ and $\rho_1\neq 0$ for all $\varepsilon \Lambda <0$. On varying the value of $\varepsilon\Lambda$, we observe by direct inspection that $\rho_0 \in (0,+\infty)$, while $\gamma \in (-\infty, (8k^2)^{-1})$. Interestingly, $\gamma$ is negative when $0>\varepsilon\Lambda >-6k^2$ and positive if  $\varepsilon\Lambda <-6k^2$.

%\footnote{The appearance of the factor $e^{2 i \pi/3}$ is due to our choice for the cubic root.}:

\item If $s=-1$, we find interestingly enough that no real solutions exist (after checking if they satisfy \eqref{eq:eqsci2}) for $\varepsilon \Lambda >-81 k^2$, while for $\varepsilon\Lambda \leq -81 k^2$ we have two possible solutions\footnote{They actually coincide for $\varepsilon\Lambda =-81k ^2$.}. These solutions are the ones in \eqref{eq:solsprueba} with $l=2$ and $l=3$, that we denote respectively as $\rho_2$ and $\rho_3$. Substituting them in \eqref{eq:gamma} we obtain two solutions $(\rho_2,\gamma_2)$ and $(\rho_3,\gamma_3)$. We check \emph{a posteriori} that both satisfy $\rho_l+2\gamma_l>0$, $\rho_l+\gamma_l> 0$ and $\rho_l\neq 0$ with $l=2,3$ for all $\varepsilon \Lambda \leq 81 k^2$. On the other hand, for the different values of $\varepsilon\Lambda \leq -81 k^2$, by direct inspection we see that $\rho_2 \in  ( -(18 k^2)^{-1}, 0)$ and $\rho_3 \in \left (- \frac{5}{80 k^2},0 \right )$, while $\gamma_2 \in \left ( 0, \frac{5}{72 k^2} \right) $ and $\gamma_3 \in \left( \frac{5}{72 k^2}, (8k^2)^{-1} \right)$.
\end{itemize}
\end{proof}

\noindent 
According to Proposition \ref{prop:solsbrt}, there exists a unique solution for the pair $(a(t),b(t))$ if $\varepsilon\Lambda>-81 k^2$, while there are three (some of them identical for $\varepsilon\Lambda=-81 k^2$) if $\varepsilon\Lambda\leq -81 k^2$. By use of Proposition \ref{prop:inivalues}, it is possible to find such solutions, which we write next.
\begin{prop}\label{prop:neg(time)q(para)}
Let $(\rho_l,\gamma_l)$ for $l=1,2,3$ denote the pairs of Proposition \ref{prop:inivalues} and let $(A_s (\rho_l,\gamma_l),B_s(\rho_l,\gamma_l))$ be as in \eqref{eq:absols}. Set:
\begin{eqnarray}
\label{eq:uhm}
(a_1(t),b_1(t))&=&(A_{1}(\rho_1,\gamma_1),B_{1}(\rho_1,\gamma_1))\,,\\
\label{eq:uhm2}
(a_2(t),b_2(t))&=&(A_{-1}(\rho_2,\gamma_2),B_{-1}(\rho_2,\gamma_2))\,,\\
\label{eq:uhm3}
(a_3(t),b_3(t))&=&(A_{-1}(\rho_3,\gamma_3),B_{-1}(\rho_3,\gamma_3))\,.
\end{eqnarray}
These are all the solutions to \eqref{eq:edosqk} (or \eqref{eq:desbriem1}) and, consequently, all negative (timelike) quaternionic (para)K\"ahler Heisenberg four-manifolds. In particular, $(a_1(t),b_1(t))$ is defined for all $\varepsilon \Lambda <0$ while $(a_2(t),b_2(t))$ and $(a_3(t),b_3(t))$ are defined for all $\varepsilon \Lambda \leq -81 k^2$. The corresponding pseudo-Riemannian manifolds arising from \eqref{eq:uhm2} and \eqref{eq:uhm3}, together with those stemming from \eqref{eq:uhm} for $\varepsilon\Lambda >-6k^2$, are incomplete, while in the case \eqref{eq:uhm} for $\varepsilon=1$ and $\Lambda<6k^2$ the solution is complete.  %Furthermore, the solution \eqref{eq:uhm} is (geodesically) complete only if $\gamma_1>0$ and the solutions \eqref{eq:uhm2} and \eqref{eq:uhm3} are always (geodesically) complete.
\end{prop}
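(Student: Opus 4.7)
The plan is to combine the uniqueness result from Proposition~\ref{prop:solsbrt} with a direct verification performed after the change of variables $t\mapsto \rho$. First I would observe that, thanks to the initial-value matching provided by Proposition~\ref{prop:inivalues}, it suffices to \emph{check} that each pair $(A_s(\rho_l,\gamma_l),B_s(\rho_l,\gamma_l))$ satisfies the first-order system consisting of the formula for $a'$ in \eqref{eq:edosqk} together with the cubic constraint $\varepsilon \Lambda = \frac{3(kb^3-ab')^3}{a^2 b^2(-3kb^3 + ab')}$. The point of the change of variables is that once $a$ and $b$ are written as explicit algebraic functions of $\rho$ via \eqref{eq:absols} and $\rho$ itself is governed by the separable ODE \eqref{eq:defrho}, the verification becomes a purely algebraic identity in $\rho$ and $\gamma$, which can be carried out systematically (indeed, with \emph{Mathematica}).

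Next I would address the matching of branches. For $\varepsilon\Lambda > -81k^2$ the cubic in $b'$ has a unique real root, so by uniqueness of solutions to the ODE system (valid near the initial condition $a(t_0)=b(t_0)=1$), the single pair $(a_1,b_1)$ determined by $(\rho_1,\gamma_1)$ must be \emph{the} solution; for $\varepsilon \Lambda \leq -81k^2$ the three real roots correspond one-to-one, via the sign $s$ and Proposition~\ref{prop:inivalues}, to the three pairs $(\rho_l,\gamma_l)$ and hence to $(a_l,b_l)$, $l=1,2,3$. In each case one checks that $(A_s,B_s)$ at $t=0$ gives $(1,1)$, which is precisely what \eqref{eq:eqsci1}--\eqref{eq:eqsci2} encode.

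The domain statement follows from analyzing when the cubic \eqref{eq:cubicpol} admits the relevant real roots, an analysis already essentially performed in the proof of Proposition~\ref{prop:inivalues}; the maximal interval $\mathcal{I}$ on which $(a_l,b_l)$ is defined corresponds to the maximal interval $J$ on which $\rho:J\to I^j_{\rho_l}$ is a diffeomorphism onto the relevant connected component of \eqref{eq:intdefrho}. The sign $s$ simply controls whether $\rho$ is increasing or decreasing in $t$ and so selects the appropriate connected component.

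The main obstacle will be the (in)completeness analysis. I would handle completeness by examining the boundary behavior of $\rho$: at $\rho \to 0^+$ (which is approached in finite $t$-distance in some branches but in infinite $t$-distance in others, depending on the sign of $\varepsilon\Lambda$ and whether $\rho+2\gamma$ stays bounded away from $0$) the metric factor $\frac{1}{\rho^2}$ in the form written in Theorem~\ref{thm:alltimeqknonstat} blows up. I would then show that, in the Riemannian branch $\varepsilon=1$, $\Lambda<6k^2$, the maximal interval is $\mathbb{R}$ and that every divergent curve has infinite $g$-length, reducing the problem to completeness of the standard one-loop deformed universal hypermultiplet, in accordance with \cite{CS}. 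In the remaining cases, I would exhibit an explicit geodesic of finite affine length reaching the boundary of $\mathcal{I}$ (a normal geodesic $t\mapsto (t,e)$ suffices in the neutral-signature branches, where the boundary is approached at finite $t$ or where, even if $t$-range is infinite, the integrability of the relevant length element fails). Putting these ingredients together proves the claim.
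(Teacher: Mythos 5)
Your proposal follows essentially the same route as the paper: the classification part is handled exactly as in the text (direct verification that $(A_s,B_s)$ solve the system, combined with the root count of Proposition~\ref{prop:solsbrt} and the initial-value matching of Proposition~\ref{prop:inivalues}), incompleteness is established by a normal geodesic of finite length reaching the boundary of the $\rho$-interval, and completeness of the remaining Riemannian branch is delegated to the known completeness criterion for the one-loop deformed universal hypermultiplet (the paper cites \cite{ACDM} for the criterion ``complete iff $\gamma>0$ and $\rho_0>0$''). Two small corrections to your sketch. First, the boundary responsible for incompleteness is $\rho\to(-\gamma)^+$, i.e.\ where $\rho+\gamma\to 0$, not $\rho\to 0^+$: near $\rho=0$ the conformal factor $1/\rho^2$ blows up and that end is in fact at \emph{infinite} distance, whereas the length element $\frac{1}{|\rho|}\sqrt{|\tfrac{\rho+2\gamma}{\rho+\gamma}|}\,\d\rho$ is integrable at $\rho=-\gamma$, which is what makes the normal geodesic have finite length there. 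Second, this finite-length geodesic argument is not confined to the neutral-signature branches: the Riemannian cases \eqref{eq:uhm2}, \eqref{eq:uhm3} (with $\Lambda\le-81k^2$) and \eqref{eq:uhm} with $-6k^2<\Lambda<0$ are incomplete and are treated by exactly the same estimate in the paper. With these adjustments your plan reproduces the paper's proof; note also that, like the paper, it leaves the branch \eqref{eq:uhm} with $\varepsilon=-1$ and $\Lambda>6k^2$ outside the scope of the statement (the paper only conjectures its incompleteness in Remark~\ref{rem:incomplequizas}), so you should not claim to cover it among ``the remaining cases.''
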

\begin{proof}
The fact that \eqref{eq:uhm}, \eqref{eq:uhm2} and \eqref{eq:uhm3} solve equations \eqref{eq:edosqk} (or \eqref{eq:desbriem1}) with the initial condition $a(t_0)=b(t_0)=1$ follows from direct computation and by Proposition \ref{prop:inivalues}. Regarding completeness, we go on a case by case fashion:
\begin{itemize}
\item Let us begin by analyzing solutions \eqref{eq:uhm2} and \eqref{eq:uhm3}. In these cases, we have $\rho_0<0$, $\gamma>0$ but $\rho_0+\gamma>0$. We consider the canonical geodesic defined by the coordinate $\rho$ (related to the time coordinate as in \eqref{eq:defrho}), with $\rho$ defined between $(-\gamma,\rho_0)$. We compute its length:
\begin{equation*}
\sqrt{\left \vert \frac{3}{2 \varepsilon\Lambda} \right \vert} \int_{-\gamma}^{\rho_0} \frac{1}{\vert \rho \vert} \sqrt{\left \vert \frac{\rho+2\gamma}{\rho+\gamma} \right \vert}\d \rho\leq C \int_0^{\rho_0+\gamma} \frac{1}{\sqrt{\tau}}\ d \tau< \infty\,,
\end{equation*}
where $C>0$ is given by $C=\sqrt{\left \vert \frac{3}{2 \varepsilon\Lambda} \right \vert} (\rho_0+\gamma) \zeta$, being $\zeta$ the maximum of the function $\frac{\sqrt{\rho+2\gamma}}{\vert \rho \vert}$ on the compact interval $[-\gamma,\rho_0]$. Since the length of this curve, which arrives to the boundary of the domain definition of the parameter $\rho$, is finite, we conclude that  solutions \eqref{eq:uhm2} and \eqref{eq:uhm3} are incomplete.     
%Solutions \eqref{eq:uhm2} and \eqref{eq:uhm3} cannot be complete since, according to the proof of Proposition \ref{prop:edosqk}, these cases correspond to the initial condition for $\rho(t)$ being negative. More concretely,
% if $\rho_0$ is negative, then the solution \eqref{eq:defrho} will evolve with $t$ towards more negative values and there will be a time $t_1<\infty$ such that $\rho(t_1)=-\gamma<0$, from which on the solution will cease to exist. 
\item For solutions \eqref{eq:uhm} with $-6k^2<\varepsilon\Lambda<0$, by virtue of Proposition \ref{prop:inivalues} and its proof we realize that $\gamma<0$  (but again, $\rho_0>-\gamma$). Therefore, through a completely equivalent proof to that provided in the previous bullet-point, we observe that these solutions are incomplete too.   %. Therefore, by evolving backwards in time the initial value $\rho_0$ according to \eqref{eq:defrho}, we observe that there will be a time $t_1>-\infty$ such that $\rho(t_1)=-\gamma>0$ in which the solution will stop to exist. Therefore, these solutions are incomplete.
\item As explained in Remark \ref{rem:reluhmvc} below, solutions \eqref{eq:uhm}, \eqref{eq:uhm2} and \eqref{eq:uhm3} with $\varepsilon=1$  are identified with the one-loop deformed universal hypermultiplet metrics (see Remark \ref{rem:reluhmvc}) described in \cite{AMTV,RSV}. They are known to be complete if and only if $\gamma$ and the initial condition $\rho_0$ are positive \cite{ACDM}. For $\varepsilon=1$ and $\Lambda<-6 k^2$, we observe that Proposition \ref{prop:inivalues} and its proof ensure that $\gamma$ and $\rho_0$ are positive for \eqref{eq:uhm}, and consequently we infer that they are complete.  
\end{itemize}
\end{proof}
\begin{rem}
\label{rem:incomplequizas}
We strongly believe the case \eqref{eq:uhm} with $\varepsilon=-1$ and $\Lambda >6k^2$ to be incomplete as well. Indeed, let us use the coordinates \eqref{eq:coordh} to describe the Heisenberg group $\mathrm{H}$. Let us consider a geodesic $\Gamma: J \rightarrow (\mathcal{I}\times \mathrm{H})$ for $J \subset \mathbb{R}$ whose coordinates are given by $(\rho(\tau), x(\tau),y(\tau),z(\tau))$ with $\tau \in J$ an affine parameter and with the initial conditions $\rho(0)=\rho_0>0$, $x(0)=y(0)=z(0)=0$, $y'(0)=z'(0)=0$ and $x'(0)=v_0$. On the one hand, we find that $y(\tau)=z(\tau)=0$. On the other hand, the solution for $x(\tau)$ can be seen to be:
\begin{equation*}
x(\tau)=\kappa \int_{t_0}^\tau  \frac{\rho^2(\sigma)}{\rho(\sigma)+2\gamma} \d \sigma\, ,
\end{equation*}
where $\kappa \in \mathbb{R}$ is some constant ensuring that $x'(0)=v_0$. Using this result, the equation for $\rho=\rho(\tau)$ turns out to be:
\begin{equation*}
\rho''=\frac{v_0^2 \rho^3 (\rho+\gamma)(\rho+4 \gamma) }{(\rho+2\gamma)^3}+\frac{(\rho')^2(4 \gamma^2 +7 \gamma \rho+2 \rho^2)}{2 \rho(\rho+\gamma)(\rho+2\gamma)}\,.
\end{equation*}
By numerical analysis, it can be seen that the solutions for the previous second-order differential equation equation are, typically, only defined in a finite interval $J$. Hence this provides a robust argument in favour of the incompleteness of the solutions.  
\end{rem}
\begin{rem}
\label{rem:reluhmvc}
Let us show that the solution \eqref{eq:uhm}  for $\varepsilon=1$ is equivalent up to homothety to the one-loop deformed universal hypermultiplet described in \cite{CS}. For that, let us denote the quantities of that work by a superscript or a subindex $\CS$. Following their notation, we define $\d t_{\CS}=\sqrt{-\frac{2\Lambda}{3}} \d t$, rescale their metric with the factor $-\frac{2\Lambda}{3}$ (remember $\Lambda <0$) and define $e^3_{\CS}=\frac{1}{k w} \fre_{t_0}^1$, $e^1_{\CS}=\frac{1}{k \sqrt{w}} \fre_{t_0}^2$ and $e^2_{\CS}=\frac{1}{k \sqrt{w}} \fre_{t_0}^3$ with $w > 0$ such that $c_{\CS}=w \gamma$. Note that the corresponding dual vectors satisfy\footnote{Therefore we may use the coordinates \eqref{eq:coordh} and their dual basis of one-forms \eqref{eq:dualbasisheis} for $e^i_{\CS}$ by setting $k=1$.}$[e_2^{\CS},e_3^{\CS}]=-2 e_1^{\CS}$. We also set  $a_{\CS}=-2 \Lambda k^2 w^2 a^{-2} /3$ and $b_{\CS}=-2 \Lambda k^2 w b^{-2} /3$ and rescale $\rho=w \rho_{CS}$. After all these identifications, we still have the freedom to perform time shifts on $t_{\CS}$. In order to match  $c_{\CS}>0$ and the initial condition for $\rho_{CS}$, $\rho_{CS}^0$:
\begin{itemize}
\item If $c_{\CS}>0$ and $\rho_{CS}^0>0$, then we recover our solution \eqref{eq:uhm} with $\Lambda <-6 k^2$. To see this, we note that in this case $0<\gamma<(8k^2)^{-1}$ (as  Proposition \ref{prop:inivalues} and its proof reveal). Consequently after an appropriate time shift and choosing $w=\tan\left (4 \pi k^2 \gamma \right)$, we observe that we may obtain all possible $c_{\CS}>0$ and $\rho_{CS}^0>0$.
\item If $c_{\CS}<0$ and $\rho_{CS}^0>0$, then they correspond to the solution \eqref{eq:uhm} with $0>\Lambda >-6 k^2$. The identification with $(c_{\CS},\rho_{CS}^0)$ is obtained by setting $w=1$ and performing a suitable time shift on $t_{\CS}$.
\item If $c_{\CS}>0$ and $\rho_{CS}^0<0$, we distinguish two different cases. Set $w=\tan\left (4\pi k^2 \gamma \right)$ as before. Then we identify (Riemannian) solutions \eqref{eq:uhm2}  with $c_{\CS} \in \left (0, \frac{5}{72 k^2} \tan\left( \frac{5 \pi}{18} \right) \right)$  and \eqref{eq:uhm3} with $c_{\CS} \in \left (\frac{5}{72 k^2} \tan\left( \frac{5 \pi}{18} \right), +\infty \right)$ after performing time translations, if necessary. If  $c_{\CS}=\frac{5}{72 k^2} \tan\left( \frac{5 \pi}{18} \right)$, then the corresponding $\gamma$ is the one for which \eqref{eq:uhm2} and \eqref{eq:uhm3} coincide and we can take any of them.
\end{itemize}
\end{rem}
\begin{rem}
Regarding the $\varepsilon=-1$ case, it can be seen that they correspond to a neutral-signature version of the one-loop deformed universal hypermultiplet. More concretely, these negative timelike quaternionic paraK\"ahler Heisenberg four-manifolds can be obtained through the local temporal supergravity $c$-map \cite{CMMS2}. In order to see this, we just have to start from the trivial zero-dimensional manifold $\bar{M}$ given by a point and set in Equation (59) of Reference \cite{DV} (following their notation) $\epsilon_1=-\epsilon_2=-1$, $I=0$, $z^0=1$ and $F_0=\frac{i_{\epsilon_1}}{2} (X^0)^2$, what implies in turn that $e^{\mathcal{K}}=1/2$ and $(\hat{H}^{ab})=\mathrm{diag}(1,1)$. Finally, by a completely analogous procedure to that described in Remark \ref{rem:reluhmvc}, we observe that we get, up to a global sign, our negative timelike quaternionic paraK\"ahler Heisenberg four-manifolds. 

%While \eqref{eq:uhm} is understood as a one-loop deformation of the universal hypermultiplet, the two additional solutions \eqref{eq:uhm2} and \eqref{eq:uhm3} are different from \eqref{eq:uhm} and should be interpreted as pure one-loop solutions of type IIA supergravity obtained directly through the use of the $c$-map. 
\end{rem}

\subsubsection{Positive (timelike) quaternionic (para)K\"ahler Heisenberg four-manifolds}

\begin{mydef}
A (timelike) quaternionic (para)K\"ahler Heisenberg four-manifold is said to be positive if $\varepsilon\Lambda>0$.
\end{mydef}
\begin{rem}
Note that positive quaternionic K\"ahler Heisenberg four-manifolds have $\Lambda >0$ while the positive timelike quaternionic paraK\"ahler ones have $\Lambda <0$. 
\end{rem}

Let $\gamma \in \mathbb{R}$ and let $I$ be a connected component of the set:
\begin{equation}
\{ \rho \in \mathbb{R}\, \vert\, \rho \neq 0 , \rho+\gamma>0 \, \,  \text{and}\,  \, \rho+2\gamma<0 \,  \}\,.
\label{eq:intdefrhopos}
\end{equation}
Clearly, $\rho>0$ and $\gamma <0$. Let $\rho : J \overset{\sim}{\rightarrow} I$, $t \mapsto \rho(t)$ be a maximal solution of the ordinary differential equation:
\begin{equation}
\rho'(t)=\sqrt{\frac{2 \varepsilon\Lambda}{3}} \rho(t) \sqrt{-\frac{\rho(t)+\gamma}{\rho(t)+2\gamma}}\,,
\label{eq:defrhopos}
\end{equation}
with initial condition $\rho(0)=\rho_0$. Define:
\begin{equation}
\label{eq:abpos}
A(\rho_0,\gamma)=k \sqrt{\frac{2 \varepsilon \Lambda}{3}} \rho(t) \sqrt{-\frac{\rho(t)+2 \gamma}{\rho(t)+\gamma}}\, , \quad B(\rho_0,\gamma)=\sqrt{\frac{\varepsilon \Lambda}{3}} \frac{\rho(t)}{\sqrt{-\rho(t)-2\gamma}}\,.
\end{equation}
\begin{prop}
\label{prop:inivaluepos}
Let $(A(\rho_0, \gamma),B(\rho_0,\gamma))$ as in \eqref{eq:abpos}. If $\varepsilon\Lambda >0$, there exists a unique pair $(\rho_0,\gamma)$ such that:
\begin{equation}
A(\rho_0, \gamma)=B(\rho_0,\gamma)=1\,.
\end{equation}
\end{prop}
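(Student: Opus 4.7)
The plan is to mirror the strategy used in the proof of Proposition~\ref{prop:inivalues}. First I would set $A(\rho_0,\gamma)=B(\rho_0,\gamma)=1$ and, from $B=1$ (after squaring), solve for $\gamma$. A short computation shows that
\begin{equation*}
2\gamma = -\rho_0\left(1+\tfrac{\varepsilon\Lambda}{3}\rho_0\right),
\end{equation*}
exactly as in the negative case. Substituting this expression into $A=1$ (after squaring), using that $\rho_0+2\gamma=-\tfrac{\varepsilon\Lambda}{3}\rho_0^2$ and $\rho_0+\gamma=\tfrac{\rho_0}{2}\bigl(1-\tfrac{\varepsilon\Lambda}{3}\rho_0\bigr)$, the equation reduces to the same cubic polynomial as in Proposition~\ref{prop:inivalues}, namely
\begin{equation*}
\rho_0^3+\frac{3}{4\varepsilon\Lambda k^2}\rho_0-\frac{9}{4k^2\Lambda^2}=0.
\end{equation*}

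Next I would compute its discriminant $\Delta=-\tfrac{4}{27}\Lambda^2(\varepsilon\Lambda+81k^2)$, and observe that since $\varepsilon\Lambda>0$ we have $\Delta<0$, so the cubic admits exactly one real root. This root may be written in the real form $\rho_4$ displayed in Theorem~\ref{thm:alltimeqknonstat}, and together with the formula above for $\gamma$ it produces a unique candidate pair $(\rho_0,\gamma)$.

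Since the step $A=1\Rightarrow A^2=1$ is not reversible, the final (and main) step is the \emph{a posteriori} verification that this unique real root actually yields $A=B=+1$ (rather than $\pm 1$) and satisfies the admissibility conditions in \eqref{eq:intdefrhopos}. For this I would argue as follows. Setting $f(\rho_0)$ equal to the cubic above, one has $f(0)=-\tfrac{9}{4k^2\Lambda^2}<0$ and, after direct substitution, $f\bigl(3/(\varepsilon\Lambda)\bigr)=\tfrac{27}{\varepsilon\Lambda^3}>0$ since $\varepsilon\Lambda>0$. Therefore the unique real root lies in the open interval $\bigl(0,\,3/(\varepsilon\Lambda)\bigr)$. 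This interval is precisely the range in which $\rho_0+2\gamma=-\tfrac{\varepsilon\Lambda}{3}\rho_0^2<0$ and $\rho_0+\gamma=\tfrac{\rho_0}{2}\bigl(1-\tfrac{\varepsilon\Lambda}{3}\rho_0\bigr)>0$, so \eqref{eq:intdefrhopos} is automatically satisfied. Moreover, with $\rho_0>0$, $k>0$, and the two factors inside the square roots of $A$ and $B$ positive, both $A(\rho_0,\gamma)$ and $B(\rho_0,\gamma)$ are positive, so indeed $A=B=+1$. This establishes existence and uniqueness.

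The main obstacle is the sign/admissibility check in the last step: the squaring that produces the cubic could in principle introduce spurious roots, and one must control not only the real/complex character of the roots but also their location, so that all the positivity conditions built into the definitions of $A$, $B$ and of the interval $I$ in \eqref{eq:intdefrhopos} are met simultaneously. The evaluation of $f$ at $0$ and at $3/(\varepsilon\Lambda)$ is the crucial computation that makes this check go through.
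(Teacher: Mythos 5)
Your proof is correct and follows the same route as the paper: solve $B=1$ for $\gamma$, reduce $A=1$ to the cubic $\rho_0^3+\tfrac{3}{4\varepsilon\Lambda k^2}\rho_0-\tfrac{9}{4k^2\Lambda^2}=0$, and use $\varepsilon\Lambda>0$ to get a unique real root (note the discriminant you quote is that of the auxiliary cubic \eqref{eq:cubicpolrt}, which differs from that of the $\rho_0$-cubic only by a positive factor, so the sign conclusion is unaffected). Your intermediate-value localization of the root in $\bigl(0,3/(\varepsilon\Lambda)\bigr)$ is a cleaner and more explicit way of carrying out the admissibility and sign check that the paper dispatches with ``after a careful study.''
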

\begin{proof}
If we impose $A(\rho_0, \gamma)=B(\rho_0,\gamma)=1$, we can solve for $\gamma$ in the equation $B(\rho_0,\gamma)=1$ and get:
\begin{equation}
\label{eq:gammapos}
2\gamma=-\rho_0\left (1+\frac{\varepsilon\Lambda}{3}\rho_0 \right)\,.
\end{equation}
Now substituting this result in $A(\rho_0, \gamma)=1$ we find the following cubic equation:
\begin{equation}
\rho_0^3+\frac{3}{4 \varepsilon \Lambda k^2} \rho_0-\frac{9}{4k^2 \Lambda^2}=0\,.
\end{equation}
This is formally equivalent to that found for negative (timelike) quaternionic (para)K\"ahler Heisenberg four-manifolds, although now $\varepsilon \Lambda >0$. The previous cubic equation has a unique real root when $\varepsilon \Lambda >0$ and it turns out to be:
\begin{equation}
\rho_0=\frac{-1+ (\varepsilon\Lambda)^{-1/3} (9k+\sqrt{81k^2 +\varepsilon\Lambda})^{2/3}}{ 2k (\varepsilon\Lambda)^{1/3} (9k+\sqrt{81k^2 +\varepsilon\Lambda})^{1/3}}>0\,.
\end{equation}
%\begin{eqnarray}
%\rho_0=\frac{\varepsilon \Lambda }{k  (-9 k \Lambda^4+\sqrt{ \Lambda^8 (81 k^2+\varepsilon\Lambda)})^{1/3}} -k \frac{1}{2 %\Lambda^2} (-9 k \Lambda^4+\sqrt{\Lambda^8(81 k^2+\varepsilon\Lambda) }  )^{1/3} >0\,.
%\end{eqnarray}
Taking this expression into that of $\gamma$ given at \eqref{eq:gammapos}, we find the unique solution $(\rho_0,\gamma)$. Finally, after a careful study, in addition to $\rho_0 \neq 0$ we may learn that $\rho_0+\gamma >0$ and $\rho_0+2\gamma<0$ for all $\varepsilon\Lambda >0$ and we conclude.
\end{proof}

\noindent
According to Proposition \ref{prop:solsbrt}, there exists a unique solution $(a(t),b(t))$ for $\varepsilon\Lambda>0$, which we proceed to present now.

\begin{prop}\label{prop:pos(time)q(para)}
Let $(\rho_0,\gamma)$  denote the pair of Proposition \ref{prop:inivaluepos}
and $(A(\rho_0,\gamma),B(\rho_0,\gamma))$ as in \eqref{eq:abpos}. Set:
\begin{eqnarray}
\label{eq:uhmpos}
(a(t),b(t))&=&(A(\rho_0,\gamma),B(\rho_0,\gamma))\,,
\end{eqnarray}
These are all the solutions to \eqref{eq:edosqk} with $\varepsilon\Lambda >0$ and, consequently, all positive (timelike) quaternionic (para)K\"ahler Heisenberg four-manifolds. They all are incomplete. 
\end{prop}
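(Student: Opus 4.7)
The plan is to proceed in three steps, closely paralleling the proof of Proposition~\ref{prop:neg(time)q(para)}.

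First, I would verify that $(a(t),b(t))=(A(\rho_0,\gamma),B(\rho_0,\gamma))$ solves the ODE system \eqref{eq:edosqk}. This is a direct computation: using the definitions \eqref{eq:abpos} together with the ODE \eqref{eq:defrhopos} satisfied by $\rho(t)$, one computes $a'$ and $b'$ via the chain rule and checks that the resulting expressions satisfy both equations in \eqref{eq:edosqk}. The initial conditions $a(t_0)=b(t_0)=1$ are exactly the conditions $A(\rho_0,\gamma)=B(\rho_0,\gamma)=1$ enforced by Proposition~\ref{prop:inivaluepos}.

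Second, for uniqueness I would invoke Proposition~\ref{prop:solsbrt}: since $\varepsilon\Lambda>0>-81k^2$, the cubic equation for $b'$ has a single real root $\mathcal{B}_1$, reducing \eqref{eq:edosqk} to a first-order system $a'=f_1(a,b,\varepsilon\Lambda)$, $b'=f_2(a,b,\varepsilon\Lambda)$ with right-hand sides smooth in a neighborhood of $(1,1)$. The Picard--Lindel\"of theorem then guarantees that the solution constructed in step one is the unique maximal one, so it exhausts all positive (timelike) quaternionic (para)K\"ahler Heisenberg four-manifolds.

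Third, for incompleteness I would analyze the integral curve of $\partial_t$ through $(t_0,e)$. By Proposition~\ref{prop:inivaluepos} we have $\gamma<0$, so the connected component of the set \eqref{eq:intdefrhopos} containing $\rho_0$ is the bounded interval $(-\gamma,-2\gamma)$. Inverting \eqref{eq:defrhopos}, the total elapsed $t$ as $\rho$ traverses this interval equals
\begin{equation*}
\Delta t \;=\; \sqrt{\tfrac{3}{2\varepsilon\Lambda}}\int_{-\gamma}^{-2\gamma}\frac{1}{\rho}\sqrt{-\frac{\rho+2\gamma}{\rho+\gamma}}\,d\rho.
\end{equation*}
Near $\rho=-\gamma^{+}$ the integrand behaves like $C_1/\sqrt{\rho+\gamma}$ and near $\rho=-2\gamma^{-}$ like $C_2\sqrt{-(\rho+2\gamma)}$; both endpoints are integrable, so $\Delta t<\infty$. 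Since $g(\partial_t,\partial_t)=\varepsilon=\pm 1$, the curve $t\mapsto(t,e)$ is a unit-speed geodesic of finite arc-length that cannot be extended past $\rho=-\gamma$ (where the metric degenerates), which proves incompleteness.

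The only genuinely delicate point is the substitution check in step one, where one must match a cubic constraint on $b'$ with the explicit formulas \eqref{eq:abpos}; however, the algebraic structure is identical to the negative case of Proposition~\ref{prop:neg(time)q(para)} up to sign changes already absorbed into \eqref{eq:defrhopos} and \eqref{eq:abpos}, so the same manipulations transfer verbatim.
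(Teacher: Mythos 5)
Your proposal is correct and follows essentially the same route as the paper: existence by direct substitution using Proposition~\ref{prop:inivaluepos}, uniqueness via the single real root of the cubic from Proposition~\ref{prop:solsbrt} together with the standard ODE existence-and-uniqueness theorem, and incompleteness by showing the normal geodesic has finite length as $\rho$ traverses the bounded interval $(-\gamma,-2\gamma)$. The only (cosmetic) difference is that you spell out the endpoint asymptotics of the integrand at both ends, whereas the paper bounds the whole integral by $C\int_0^{-\gamma}\tau^{-1/2}\,\d\tau$.
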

\begin{proof}
The fact that they all are solutions to \eqref{eq:edosqk} with $\varepsilon\Lambda >0$ follows by direct computations and by use of Proposition \ref{prop:inivaluepos}. Regarding the incompleteness, since $\rho_0>0$ and $\gamma<0$,  let us consider the  geodesic defined by the coordinate $\rho$ (related to the temporal coordinate as in \eqref{eq:defrho}), with $\rho$ defined between $(-\gamma,-2 \gamma)$. We calculate its length:
\begin{equation*}
 \sqrt{\frac{3}{2 \varepsilon\Lambda}} \int_{-\gamma}^{-2\gamma} \frac{1}{\vert \rho \vert} \sqrt{\left \vert \frac{\rho+2\gamma}{\rho+\gamma} \right \vert}\d \rho\leq C \int_0^{-\gamma} \frac{1}{\sqrt{\tau}}\ d \tau< \infty\,,
\end{equation*}
where $C>0$ is given by $C=- \sqrt{\frac{3 }{2 \varepsilon\Lambda}} \gamma \zeta$, being $\zeta$ the maximum of the function $\frac{\sqrt{\vert \rho+2\gamma\vert }}{\rho}$ on the compact interval $[-\gamma,-2\gamma]$. Since the length of this curve, which reaches the boundary of the domain of definition of the parameter $\rho$, is finite, we conclude that the solution \eqref{eq:uhmpos} is incomplete.

% then on evolving $\rho(t)$ with time there will be $t_1>t_0$ for which $\lim_{t \rightarrow t_1^-}\rho(t_1)=-2\gamma$, and from this point on $\rho(t)$ will cease to exist. Hence all these (pseudo)-Riemannian manifolds are incomplete.
\end{proof}
\begin{rem}
The non-stationary solutions with $\varepsilon\Lambda >0$ are obtained from those with $\varepsilon\Lambda <0$ basically by replacing $\varepsilon\Lambda \rightarrow -\varepsilon\Lambda$ and changing suitably the domain of $\rho$ to ensure the reality of the coordinate.
\end{rem}
\begin{rem}
We may interpret positive (timelike) quaternionic (para)K\"ahler Heisenberg four-manifolds as positively (negatively) curved versions of the one-loop deformed universal hypermultiplet solution.
\end{rem}

\noindent
\textbf{Proof of Theorem \ref{thm:alltimeqknonstat}}.  Collecting the results given in Propositions \ref{prop:inivalues}, \ref{prop:neg(time)q(para)}, \ref{prop:inivaluepos} and \ref{prop:pos(time)q(para)} and expressing the metric in terms of the coordinate $\rho$, as defined by \eqref{eq:intdefrho} and \eqref{eq:defrho} in the negative case and by \eqref{eq:intdefrhopos} and \eqref{eq:defrhopos} in the positive case, we conclude.

\subsection{Spacelike quaternionic paraK\"ahler Heisenberg four-manifolds}

Now we continue with the classification of all spacelike quaternionic paraK\"ahler Heisenberg four-manifolds. For that, we shall use the ansatz described in \eqref{eq:ansspacesim},  valid in an open subinterval $\mathcal{I}'_s \subset \mathcal{I}$, which we rewrite here for the benefit of the reader:
\begin{equation}
\label{eq:anssiem2}
U^t=\begin{pmatrix}
c(t) &h(t) & 0\\
-h(t) & b(t) & 0 \\
0 & 0 & a(t) \\
\end{pmatrix}\,, \quad [\fre_1^{t_0},\fre_2^{t_0}]=-2k \fre_3^{t_0}\,, \quad k>0\,.
\end{equation}
\begin{prop}
\label{prop:riccispace}
The non-zero components of the Ricci curvature tensor $\ric^g$ of the metric obtained from \eqref{eq:anssiem2} are:
\begin{eqnarray*}
\mathrm{Ric}^g(\partial_t ,\partial_t)&=& \frac{(-2a^2ch (7 b'+c') h'+a^2 c^2(-4(b')^2+(h')^2)+h^4(-4(a')^2+2 aa''))}{2 a^2(bc+h^2)^2}\\
& &+\frac{a^2 h^2((b')^2+6 b' c'+(c')^2-8 (h')^2+2c b'')+b^2 a^2(-4(c')^2+(h')^2)}{2 a^2(bc+h^2)^2}\\ & & \frac{+b^2 (c^2(-4 (a')^2+2a a'')+2a^2 cc''))+4a^2 h^3 h''+2b a^2 c^2 b''-2 ba^2 h b' h'}{2 a^2(bc+h^2)^2}\\ & & +\frac{2b (a^2 h(-7 c'h'+h c'')+c(3a^2(h')^2+h^2(-4(a')^2+2 a a'')+2 a^2 h h'')  )}{2 a^2(bc+h^2)^2}\, , \\
  \mathrm{Ric}^g(\mathfrak{e}_1^t,\mathfrak{e}_1^t)&=&-2 k^2 \frac{(bc+h^2)^2}{a^2}-\frac{2bc b'c'+4 b^2 (c')^2+b^2 (h')^2-2bc (h')^2-c^2(h')^2}{2(bc+h^2)^2}\\& & -\frac{a'(bc'+hh')}{a(bc+h^2)}+\frac{2b^2 c c''+h^2(-(b')^2+2 b'c'+(c')^2-6(h')^2+2b c''))}{2(bc+h^2)^2}\\ & & +\frac{2 h^3 h''+2h((b'(b-2c)-c'(6b+c))h'+bc h'')}{2(bc+h^2)^2}\, , \\
   \mathrm{Ric}^g(\mathfrak{e}_1^t,\mathfrak{e}_2^t) &=& \frac{3 b^2 c' h'}{2(bc+h^2)^2}+\frac{a'(-h(b'+c')+(b+c)h')}{2a(bc+h^2)}+\frac{h^2(h(b''+c'')-4(b'+c')h')}{2(bc+h^2)^2}\\& &+\frac{c^2(3b'h'-bh'')-bh(3b'c'+(c')^2-4(h')^2+h h'')-chb' (b'+3c')}{2(bc+h^2)^2} \\ & & +\frac{ch(4(h')^2+b(b''+c''))-c(h^2 h''+b((b'+c')h'-bh''))}{2(bc+h^2)^2}
   \, , \\
\mathrm{Ric}^g(\mathfrak{e}_2^t,\mathfrak{e}_2^t)&=&2 k^2 \frac{(bc+h^2)^2}{a^2}+\frac{h'((c^2-b^2+6 h^2-2bc)h'+12chb'-2c c' h+2 bh(b'+2c')))}{2(bc+h^2)^2}\\
& & +\frac{a'(cb'+hh')}{a(bc+h^2)}+\frac{b'(4c^2 b'-h^2 b'-2h^2c'+2 b c c')+h^2 (c')^2}{2(bc+h^2)^2} \\ & & -\frac{b''(ch^2+bc^2)+h''h (h^2+bc)}{(bc+h^2)^2}\, ,\\
\mathrm{Ric}^g(\mathfrak{e}_3^t,\mathfrak{e}_3^t)&=&-2k^2\frac{(bc+h^2)^2}{a^2}+\frac{2 (a')^2}{a^2}+ \frac{a'(cb'+bc'+2h h')}{a(bc+h^2)}-\frac{a''}{a} \, .\\
\end{eqnarray*}
\end{prop}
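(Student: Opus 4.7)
The proposition is a direct computation of the Ricci tensor of an explicitly given metric, so my plan is organizational rather than conceptual: set up a convenient computational framework that exploits the Heisenberg structure, then grind through the curvature computation symbolically (ideally via a computer algebra system, as the authors did with Mathematica).

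First I would write out the metric \eqref{eq:metgen} explicitly using the ansatz \eqref{eq:anssiem2}. Inverting $U^t$ gives $\fre_t^i$ as a linear combination of $\fre_{t_0}^j$, and since the Heisenberg brackets are constant, the full Lie bracket structure of the moving frame $\{\partial_t,\fre_i^t\}$ is determined by $[\partial_t,\fre_i^t]=\sum_j (U'(U^{-1}))_{ji}\fre_j^t$ together with $[\fre_i^t,\fre_j^t]$, computed from $U^t$ and $[\fre_i^{t_0},\fre_j^{t_0}]=-2k\,\delta_{\{ij\}\{12\}}\fre_3^{t_0}$. Because $\fre_3^t=a(t)\fre_3^{t_0}$ and $\fre_3^{t_0}$ spans the center, many of these brackets simplify nicely.

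Second, I would compute the connection one-forms via the Koszul formula in the orthonormal/Witt frame, using $\eta$ from \eqref{eq:etaexpression} for signs. Specifically, for the frame $\{\partial_t,\fre_1^t,\fre_2^t,\fre_3^t\}$ with metric $\mathrm{diag}(-1,-1,1,1)$, one has
\begin{equation*}
2\chi(\nabla_X Y,Z) = X\chi(Y,Z)+Y\chi(Z,X)-Z\chi(X,Y)+\chi([X,Y],Z)-\chi([Y,Z],X)+\chi([Z,X],Y),
\end{equation*}
and plug the brackets from step one into this. Then I would compute the curvature $R(X,Y)Z = \nabla_X\nabla_Y Z - \nabla_Y\nabla_X Z - \nabla_{[X,Y]}Z$ and contract to obtain $\mathrm{Ric}^g(X,Y)=\sum_i \eta^{ii}\chi(R(e_i,X)Y,e_i)$ plus the $\partial_t$ trace.

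Third, I would extract the components listed in the statement. The key sanity checks are: (i) vanishing of $\mathrm{Ric}^g(\partial_t,\fre_i^t)$, which Proposition \ref{prop:2ndNF} already uses implicitly to justify the reduced ansatz with $p=j=0$ and the skew entries $\pm h$; (ii) symmetry $\mathrm{Ric}^g(\fre_1^t,\fre_2^t)=\mathrm{Ric}^g(\fre_2^t,\fre_1^t)$; (iii) correct specialization to the timelike formulas in Proposition \ref{prop:riccitime} after sending $h\to 0$, $c\to b$ and swapping the role of the center. The main obstacle is not mathematical but bookkeeping: the off-diagonal entry $h(t)$ mixes the two spacelike directions, so the $(bc+h^2)^{-2}$ denominators and the many cross terms in $b,c,h,b',c',h',b'',c'',h''$ appear, and these must be organized carefully (e.g.\ by expanding in powers of $h$ and grouping by derivative order) to match the stated expressions. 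Since this is a mechanical verification, I would delegate it to a computer algebra system and simply record the result.
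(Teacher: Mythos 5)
Your proposal is correct and follows essentially the same route as the paper, whose proof of this proposition is literally ``by direct computation'' (carried out in Mathematica): one sets up the moving frame $\{\partial_t,\fre_i^t\}$ with its Lie brackets, applies the Koszul formula, and contracts the curvature. Your organizational framework and sanity checks (vanishing of $\mathrm{Ric}^g(\partial_t,\fre_i^t)$, symmetry, and the $h\to 0$, $c\to b$ limit) are exactly what one would use to validate the symbolic output.
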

\begin{proof}
By direct computation. 
\end{proof}
\begin{prop}
\label{prop:eqsspace}
Let $(M,g)$ be a spacelike quaternionic paraK\"ahler Heisenberg four-manifold. Then:
\begin{equation}
\label{eq:edoskspace}
a'(t)=-\frac{k^2 b^6+a^2(-\Lambda b^2+(b')^2)}{2ab b'}\, , \quad \Lambda=\frac{3(kb^3+a b')^3}{a^2 b^2(3k b^3+a b')}\, , \quad c=b\, , \quad h=0\,. \\
\end{equation}
Furthermore, $\mathcal{I}'_s=\mathcal{I}$.
\end{prop}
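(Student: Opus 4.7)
The plan is to mirror the proof of Proposition~\ref{prop:edosqk} closely, using Proposition~\ref{prop:riccispace} as our starting point and the spacelike ansatz \eqref{eq:anssiem2}. The rough dictionary is that the role of $\fre_1^t$ (the center in the timelike case) is now played by $\fre_3^t$, and the function $h$ measures the same type of obstruction to a diagonal frame as it did there; so we expect the same two ingredients, Einstein off-diagonal vanishing plus one component of the Weyl self-duality tensor, to force $h \equiv 0$.

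First I would impose the off-diagonal Einstein condition $\ric^g(\fre_1^t,\fre_2^t)=0$ from Proposition~\ref{prop:riccispace} and solve it algebraically for one second derivative (say $h''$, just as $h''$ was solved for in the timelike case). Next I would compute the components of the Weyl self-duality tensor $\mathcal{W}^g$ and identify the analogue of $\mathcal{W}^g(\fre_1^t,\fre_2^t,\fre_1^t,\fre_3^t)$; the expectation, by structural similarity, is to obtain a factorization of the form $(k\cdot(\text{positive expression}) + a')(b h' - h b')$ (or with $c$ in place of $b$). Since the first factor vanishing would eventually force $\Lambda = 0$ (contradicting $\Lambda\neq 0$), the second factor must vanish, which together with the initial condition $h(t_0)=0$ and $b(t_0)=c(t_0)=1$ gives $h\equiv 0$ on $\mathcal{I}'_s$ via the uniqueness for a linear first-order ODE in $h$.

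With $h=0$ imposed in Proposition~\ref{prop:riccispace}, the system reduces drastically, and I would then solve the diagonal Einstein equations $\ric^g(\fre_i^t,\fre_i^t) = \eta_{ii}\Lambda$ algebraically for $a',a'',b'',c''$, obtaining in particular the expression for $a'$ stated in the proposition. Substituting these into a suitable diagonal Weyl component (the analogue of $\mathcal{W}^g(\fre_1^t,\fre_2^t,\fre_1^t,\fre_2^t)$) yields an expression for $\Lambda$ that, after inspection of another component (the analogue of $\mathcal{W}^g(\partial_t,\fre_1^t,\partial_t,\fre_1^t)$), factors as a product one of whose factors is $cb'-bc'$. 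The other possible factors again force $\Lambda=0$, so $cb'=bc'$, whence $c=b$ by the initial condition, and finally the expression for $\Lambda$ simplifies to the stated cubic form $\Lambda = 3(kb^3+ab')^3/[a^2 b^2(3kb^3+ab')]$.

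For the last claim $\mathcal{I}'_s=\mathcal{I}$: once $h\equiv 0$ and $c=b$ are established on $\mathcal{I}'_s$, the reduced system is an ODE for $(a,b)$ alone, and the resulting frame $U^t$ remains non-degenerate (since $a,b$ cannot vanish without making $U^t$ singular, contradicting that $\{\fre_i^t\}$ is an orthonormal frame). By a continuity/openness argument identical to the one used in the proof of Proposition~\ref{prop:zcausalconst}, the maximal subinterval where the spacelike ansatz \eqref{eq:anssiem2} applies is both open and closed in $\mathcal{I}$, hence equals $\mathcal{I}$. The main computational obstacle is the sheer size of the Weyl tensor expressions in the spacelike ansatz, but the argument is structural and relies only on identifying the right factorizations rather than expanding them.
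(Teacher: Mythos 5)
Your proposal follows essentially the same route as the paper's proof: solve $\ric^g(\fre_1^t,\fre_2^t)=0$ for $h''$, use the factorized Weyl component $\mathcal{W}^g(\fre_1^t,\partial_t,\fre_3^t,\fre_1^t)=\frac{(kbc+kh^2-a')\,((b+c)h'-h(b'+c'))}{2a(bc+h^2)}$ to force $h\equiv 0$ (the second factor is $(b+c)h'-h(b'+c')$ rather than your guessed $bh'-hb'$, and the paper adds a short non-degeneracy argument for $U^t$ to ensure $b+c\neq 0$ on all of $\mathcal{I}'_s$), then solve the Einstein equations for $a',a'',b'',c''$, extract $\Lambda$ from a diagonal Weyl component, and force $c=b$ from the factor $cb'-bc'$ in another one. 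The closing argument for $\mathcal{I}'_s=\mathcal{I}$ also matches the paper's in substance (the reduced metric can only degenerate where $a$ or $b$ does, which is excluded), so the proposal is correct and structurally identical.
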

\begin{proof}
We remind that a spacelike quaternionic para K\"ahler Heisenberg four-manifold is Einstein and its Weyl tensor is self-dual. On the one hand, by setting that $\ric^g(\fre_1^t,\fre_2^t)=0$ one infers:
\begin{eqnarray}
\nonumber 
h''&=&\frac{3 a c^2 b' h'+b^2(ca'+3a c')h'+h^2(ca'-4a(b'+c')) h'-ach((b')^2+3b'c'-4(h')^2)}{a(b+c)(bc+h^2)}\\ \nonumber & &+\frac{h^3(a(b''+c'')-a'(b'+c'))+b(c^2a'h'-h a c'(3b'+c')+h^2a' h'+4 a h (h')^2)}{a(b+c)(bc+h^2)}\\ & &+\frac{bc(a (b'+c')h'+h(a(b''+c'')-a'(b'+c'))) }{a(b+c)(bc+h^2)} \,.
\end{eqnarray}
Computing the Weyl self-duality tensor $\mathcal{W}^g$ for the ansatz \eqref{eq:anssiem2} and substituting the previous result, we have that:
\begin{equation}
\mathcal{W}(\fre_1^t,\partial_t ,\fre_3^t,\fre_1^t)=\frac{(kbc+kh^2-a')(-h(b'+c')+(b+c)h')}{2a (bc+h^2) }=0\,.
\end{equation}
If $a'=k (bc+h^2)$, this would in turn imply that $\Lambda=0$, but we are imposing the condition that $\Lambda \neq 0$ to study proper quaternionic paraK\"ahler four-manifolds, so we conclude that necessarily $-h(b'+c')+(b+c)h'=0$. Since $h(t_0)=0$ and $b(t_0)+c(t_0)=2$, there exists an open subinterval of $\mathcal{I}'_s$ in which $b(t)+c(t) \neq 0$ and $h(t)=0$. In fact, $b(t)+c(t) \neq 0$ for all $t \in \mathcal{I}'_s$, because otherwise  there would be $t_1 \in \mathcal{I}'_s$ satisfying $b(t_1)=-c(t_1)$ and $h(t_1)=0$. This would imply in turn that $\det U^{t_1}=-a(t_1) b(t_1)^2$. If $a(t_1)<0$, we see that there was a time $t_2 \in \mathcal{I}'_s$ in which $a(t_2)=\det U^{t_2}=0$ (remember that $a(t_0)=1)$ and if $a(t_1)>0$, then $\det U^{t_1}<0$ and there was a time $t_3$ in which $\det U^{t_3}=0$ (note that $\det U^{t_0}=1$). Since $U^t$ is non-degenerate by hypothesis for all $t \in \mathcal{I}'_s$, we conclude that $h(t)=0$ and $b(t)+c(t) \neq 0$ in the entire $\mathcal{I}'_s$.

Now we can solve for $a'', b'', c''$ and $a'$ from the Einstein condition $\ric^g=\Lambda g$ and we get:
\begin{eqnarray}
a'(t)&=&-\frac{k^2 b^3 c^3+a^2(-\Lambda b c+b' c')}{a(cb'+b c')}\, , \\
\nonumber
a''(t)&=&\frac{-k^2 a^2 b^3 c^3 \left(2 b c b' c'+3 c^2 (b')^2+b^2 \left(3 (c')^2+4 \Lambda  c^2\right)\right)}{a^3 b c \left(c b'+b c'\right)^2}\\ \label{eq:secondaspace} &+&\frac{a^4 \left(-b^2 b' c' \left((c')^2+4 \Lambda  c^2\right)-c^2 (b')^3 c'+2 \Lambda ^2 b^3 c^3\right)+2 k^4 b^7 c^7}{a^3 b c \left(c b'+b c'\right)^2}\,,\\
\label{eq:secondbspace}
b''(t) &=& \frac{a^2 b^2 b' (c')^2+2 a^2 b c (b')^2 c'+2 a^2 c^2 (b')^3-\Lambda  a^2 b^3 c c'+k^2 b^4 c^4 b'+2 k^2 b^5 c^3 c'}{a^2 b c \left(c b'+b c'\right)}\,,\\
\label{eq:secondcspace}
c''(t) &=& \frac{a^2 \left(b c b' \left(2 (c')^2-\Lambda  c^2\right)+c^2 (b')^2 c'+2 b^2 (c')^3\right)+k^2 b^3 c^4 \left(2 c b'+b c'\right)}{a^2 b c \left(c b'+b c'\right)}\,.
\end{eqnarray}
Taking these results into the remaining components of the Weyl self-duality tensor, we find in particular that:
\begin{eqnarray}
\nonumber
\mathcal{W}^g(\fre_2^{t},\fre_3^{t},\fre_2^{t},\fre_3^{t})&=&-k^3\frac{b^4 c^4}{a^3 (cb'+bc')}- k^2 \frac{b^2 c^2(2cb'+b c')}{a^2(cb'+bc')}-k c \frac{2 bb'c'+c (b')^2-\Lambda b^2 c}{a (cb'+bc')}\\ & &-\frac{c'(\Lambda b^2+3 (b')^2)-2\Lambda b c b'}{3b (cb'+bc')}\,.
\end{eqnarray}
Equating the last expression to zero, one solves for $\Lambda$ and finds:
\begin{equation*}
\Lambda=-\frac{3(kb^2 c+a b')^2(kbc^2+a c')}{a^2 b(-3k b^2 c^2-2a c b'+a b c')}\,.
\end{equation*}
Plugging this last result into the other components of $\mathcal{W}^g$, we encounter in particular:
\begin{equation}
\mathcal{W}^g(\fre_1^{t},\fre_2^{t},\partial_t, \fre_3^{t})=\frac{(k b^2 c+a b')(k b c^2+a c')(c b'-b c')}{(abc(-3k b^2 c^2-2 a c b'+ab c'))}\,.
\end{equation}
For the latter to be zero, either one of the three terms in brackets must be zero. However, if any of the two first terms in brackets is zero, then $\Lambda=0$, so we discard this possibility and hence $c b'-bc'=0$, which in turn implies that $c(t)=b(t)$ since $c(t_0)=b(t_0)=1$. Imposing then that $c(t)=b(t)$ we find that all the components of $\mathcal{W}^g$ vanish identically and, collecting all the results derived up to this point, we arrive to \eqref{eq:edoskspace}. We check as well that equations \eqref{eq:edoskspace} are consistent with \eqref{eq:secondaspace},\eqref{eq:secondbspace} and \eqref{eq:secondcspace}.  Finally, we observe that $h(t)=0$ and $c(t)=b(t)$ is equivalent to the metric adopting the form $g=-\d t^2- b^{-2}(t) (e_{t_0}^1 \otimes e_{t_0}^1- e_{t_0}^2 \otimes e_{t_0}^2)+a^{-2}(t) e_{t_0}^3 \otimes e_{t_0}^3$, so it is clear that the metric has a singularity whenever any of the functions $a(t)$ or $b(t)$  converges to zero or diverges. Hence $\mathcal{I}'_s$, the interval of definition of the ansatz \eqref{eq:anssiem2}, must coincide with $\mathcal{I}$ and we conclude.
\end{proof}

\begin{prop}
\label{prop:spaceconfk}
Let $(M,g)$ be a spacelike quaternionic paraK\"ahler Heisenberg four-manifold. Then:
\begin{itemize}
\item The eigenvalues of the Weyl tensor, understood as a symmetric endomorphism of the bundle of two-forms, are given by $(-2\nu', \nu' , \nu')$, where:
\begin{equation*}
\nu'=\frac{16 k^3 b^7}{\varepsilon a^3(3k b^3+a b')}\,.
\end{equation*}
\item $(M,g)$ is conformally paraK\"ahler for two paracomplex structures with opposite orientations.
\end{itemize}
\end{prop}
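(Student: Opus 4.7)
The plan is to proceed in close analogy with Proposition~\ref{Weyl:prop}, adapting the roles of the frame indices to the fact that in the spacelike ansatz the Heisenberg center is spanned by $\fre_3^t$ rather than by $\fre_1^t$, and replacing the Hitchin-type lemma by its paracomplex variant from Remark~\ref{remarktolemma}. Concretely, I would first introduce the triplet of self-dual two-forms $\omega_i = \d t\wedge\fre_t^i + \star(\d t\wedge\fre_t^i)$, $i=1,2,3$, built from the neutral-signature orthonormal frame $\{\partial_t,\fre_i^t\}$ of Proposition~\ref{prop:2ndNF}.

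Using the simplified form of the metric obtained in Proposition~\ref{prop:eqsspace} (i.e.\ $c=b$ and $h=0$), I would then compute $\mathrm W^g$, viewed as a symmetric endomorphism of the bundle of two-forms, on the basis $\{\omega_i\}$ and verify that $\mathrm W^g(\omega_3) = -2\nu'\omega_3$ and $\mathrm W^g(\omega_1)=\nu'\omega_1$, $\mathrm W^g(\omega_2)=\nu'\omega_2$, reading off the expression for $\nu'$ stated in the proposition. The distinguished role of the index $3$ reflects the fact that $\fre_3^t$ generates the Heisenberg center here. This step is a direct calculation of exactly the same nature as in the timelike case and can be cross-checked with the Mathematica file \cite{M}.

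For the second part, following the scheme of Proposition~\ref{Weyl:prop}, the rescaled form $\tilde\omega_3 = |\mathrm W^g|_g^{2/3}\,\omega_3$ is closed (as the rescaling of the self-dual eigenvector with simple eigenvalue) and has constant norm with respect to the conformally rescaled metric $\tilde g = |\mathrm W^g|_g^{2/3}\,g$, so $(\tilde g,\tilde\omega_3)$ defines an almost paraK\"ahler structure once the integrability of the almost paracomplex structure $J_3 = g^{-1}\omega_3 = \tilde g^{-1}\tilde\omega_3$ is established. A careful Hodge-star computation in the signature $(2,2)$ orthonormal frame $\{\partial_t,\fre_i^t\}$, where $\partial_t$ and $\fre_1^t$ are timelike while $\fre_2^t,\fre_3^t$ are spacelike, yields $J_1^2 = -\mathbf{1}$ while $J_2^2 = J_3^2 = +\mathbf{1}$, with $J_3 = \pm J_1 J_2$ and $J_1, J_2$ anticommuting. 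Integrability of $J_3$ then follows from the paracomplex variant of Lemma~\ref{int:lem} described in Remark~\ref{remarktolemma}, applied to the pair $(J_1, J_2)$, provided one exhibits closed rescaled forms associated with them. For this I would take $\tilde\omega_1 = ab\,\omega_1$ and $\tilde\omega_2 = ab\,\omega_2$ and verify closedness by a direct computation using the structure equations $\d\fre_{t_0}^1 = \d\fre_{t_0}^2 = 0$, $\d\fre_{t_0}^3 = 2k\,\fre_{t_0}^1\wedge\fre_{t_0}^2$ together with the first identity in~\eqref{eq:edoskspace}. Reversing the sign in front of $\star$ in the definition of $\omega_3$ produces a second integrable skew-symmetric paracomplex structure $J_3'$ and hence a conformally paraK\"ahler structure with respect to the opposite orientation.

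The main obstacle I anticipate is organising the Hodge-star and causal-character bookkeeping in neutral signature correctly in order to confirm that precisely one of $J_1, J_2, J_3$ squares to $-\mathbf{1}$ and the remaining two to $+\mathbf{1}$; misidentifying these signs would spoil both the eigenvalue computation and the applicability of the paracomplex variant of the Hitchin lemma. Once the signs are firmly in place, the rest of the argument transfers almost verbatim from the timelike proof.
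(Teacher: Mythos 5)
Your proposal is correct and follows essentially the same route as the paper: the same triplet of self-dual two-forms, the same eigenvalue computation with $\omega_3$ as the $(-2\nu')$-eigenvector, the rescalings $\tilde\omega_3=\vert \mathrm{W}^g\vert_g^{2/3}\omega_3$ and $\tilde\omega_1=ab\,\omega_1$, $\tilde\omega_2=ab\,\omega_2$, and the para-version of Lemma~\ref{int:lem} from Remark~\ref{remarktolemma} applied to the pair $(J_1,J_2)$, with the relative sign in the definition of the forms giving the structure for the opposite orientation. Your sign bookkeeping ($J_1^2=-\mathbf{1}$, $J_2^2=J_3^2=+\mathbf{1}$) is the correct one, and your choice of which two forms to rescale by $ab$ in fact corrects a small typo in the paper's proof, which writes $\tilde\omega_2,\tilde\omega_3$ where it means $\tilde\omega_1,\tilde\omega_2$ (as confirmed by the remark following the proposition).
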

\begin{proof}
The proof is analogous to the timelike case. We define:
\begin{equation}\label{SDP:eq}
\omega_i= \d t \wedge \fre_t^i+ \star (\d t \wedge \fre_t^i)\, .
\end{equation}
Interpreting the Weyl tensor as a symmetric endomorphism of the bundle of two-forms in the canonical way, we have:
\begin{equation*}
\mathrm{W}^g(\omega_1)=\nu' \omega_1 \, , \quad \mathrm{W}^g(\omega_2)= \nu' \omega_2\, ,\quad \mathrm{W}^g(\omega_3)= -2\nu' \omega_3\,.
\end{equation*}
This proves the first bullet-point of the Proposition. With regard to the second one, observe that the rescaled two-form
\begin{equation*}
\tilde{\omega}_3=\vert \mathrm{W}^g \vert_g^{2/3} \omega_3
\end{equation*}
is closed and satisfies that $\vert \tilde{\omega}_3 \vert_{\tilde{g}}^2=-4$ with respect to the rescaled metric $\tilde{g}=\vert \mathrm{W}^g \vert_g^{2/3} g$. Then, using that the following two-forms
\begin{equation*}
\tilde{\omega}_2= ab \, \omega_2\, , \quad \tilde{\omega}_3= ab\, \omega_3\,
\end{equation*}
are closed, direct application of the para-version of Lemma \ref{int:lem}, see Remark~\ref{remarktolemma}, proves that $\tilde{g}$ is paraK\"ahler with paraK\"ahler form $\tilde{\omega}_3$ and we conclude. A conformally paraK\"ahler structure for the opposite orientation can be obtained by introducing a relative sign in (\ref{SDP:eq}). 
%{\color{blue}REVISE:}
%On the other hand, taking into account that the following rescaled two-forms:
%\begin{equation*}
%\tilde{\omega}_2= ab \, \omega_2\, , \quad \tilde{\omega}_3= ab\, \omega_3\,
%\end{equation*}
%are closed and satisfy that $\vert \tilde{\omega}_1 \vert_{\tilde{g}}^2=-\vert \tilde{\omega}_2 \vert_{\tilde{g}}^2=4$ with respect to the rescaled metric $\tilde{g}=a b \, g$, we conclude.
\end{proof}

\begin{rem}
In complete analogy to the situation in the timelike case, in the proof of Proposition \ref{prop:spaceconfk} we have shown that $(M,g)$ not only is conformally paraK\"ahler but admits as well two almost (para)K\"ahler structures $\tilde{\omega}_1$ and $\tilde{\omega}_2$ compatible with a second conformally rescaled metric $g':= ab \, g$, such that $\vert \tilde{\omega}_1 \vert_{g'}^2=-\vert \tilde{\omega}_2 \vert_{g'}^2=4$.
\end{rem}
We can construct all solutions to equations \eqref{eq:edoskspace} from solutions to the equations \eqref{eq:edosqk} for timelike quaternionic paraK\"ahler Heisenberg four-manifolds, as the following Proposition shows.
\begin{prop}
\label{prop:truco}
Let $(a(t),b(t))$ solve equations \eqref{eq:edosqk} for $\varepsilon=-1$ and a given value of $\Lambda$. Then $(a(2t_0-t),b(2t_0-t))$ solve \eqref{eq:edoskspace} for the same value of $\Lambda$.
\end{prop}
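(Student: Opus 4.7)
The plan is to check the proposition by a direct substitution. Set $s := 2t_0 - t$ and define $\tilde{a}(t) := a(s)$, $\tilde{b}(t) := b(s)$. Since $\mathrm{d}s/\mathrm{d}t = -1$, we immediately get $\tilde{a}'(t) = -a'(s)$ and $\tilde{b}'(t) = -b'(s)$, in particular $\tilde{b}'^2 = b'^2$. Note also that $\tilde{a}(t_0) = a(t_0) = 1 = b(t_0) = \tilde{b}(t_0)$, so the initial conditions match.

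For the first equation of \eqref{eq:edoskspace}, which (with $\varepsilon = -1$) has exactly the same form as the first equation of \eqref{eq:edosqk}, the only derivative of $b$ that appears is the squared $(b')^2$ in the numerator and the unsquared $b'$ in the denominator. Substituting $\tilde{a},\tilde{b}$ and using $\tilde{b}'(t) = -b'(s)$, the RHS picks up a global sign compared to the RHS of \eqref{eq:edosqk} evaluated at $s$; on the LHS we likewise have $\tilde{a}'(t) = -a'(s)$, so both signs cancel and the equation is satisfied provided $(a,b)$ satisfies \eqref{eq:edosqk} at $s$.

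For the algebraic constraint on $\Lambda$, substitute $\tilde{a},\tilde{b}$ into the RHS of \eqref{eq:edoskspace}:
\begin{equation*}
\frac{3(k\tilde{b}^3 + \tilde{a}\tilde{b}')^3}{\tilde{a}^2 \tilde{b}^2 (3k\tilde{b}^3 + \tilde{a}\tilde{b}')} \;=\; \frac{3(k b(s)^3 - a(s) b'(s))^3}{a(s)^2 b(s)^2 (3k b(s)^3 - a(s) b'(s))}.
\end{equation*}
On the other hand, the constraint in \eqref{eq:edosqk} with $\varepsilon = -1$ reads
\begin{equation*}
-\Lambda \;=\; \frac{3(k b(s)^3 - a(s) b'(s))^3}{a(s)^2 b(s)^2 (-3k b(s)^3 + a(s) b'(s))},
\end{equation*}
which, on multiplying numerator and denominator by $-1$ in the denominator bracket (and compensating on the LHS), gives exactly the previous display equal to $\Lambda$. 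The conditions $c = b$ and $h = 0$ are trivially preserved under reparametrization.

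There is no genuine obstacle here: the whole content is the observation that the map $t \mapsto 2t_0 - t$ sends $b' \mapsto -b'$, which is precisely the transformation that interchanges the sign patterns $(-3kb^3 + ab')/(3k b^3 + ab')$ in the denominators and $(kb^3 - ab')^3/(kb^3 + ab')^3$ in the numerators of the $\Lambda$-constraint, while the first-order equation for $a'$ is (anti)symmetric under this involution. If desired, I would add a brief conceptual remark that this time-reversal is the manifestation, at the level of the reduced ODE system, of the isometry obtained by swapping the causal role of the pair $(\mathfrak{e}_1^{t_0},\mathfrak{e}_3^{t_0})$ that produces the one-to-one correspondence of Theorem \ref{thm:alllspaceqknonstat}.
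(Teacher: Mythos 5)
Your proof is correct and follows essentially the same route as the paper's: the paper likewise defines $a_{\mathrm S}(t)=a(2t_0-t)$, $b_{\mathrm S}(t)=b(2t_0-t)$, notes that the derivatives flip sign and that the initial conditions $a_{\mathrm S}(t_0)=b_{\mathrm S}(t_0)=1$ are preserved, and concludes by direct substitution. Your write-up simply makes explicit the sign bookkeeping in the two equations that the paper leaves to the reader.
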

\begin{proof}
Defining $a_{\mathrm{S}}(t)=a(2t_0-t)$ and $b_{\mathrm{S}}(t)=b(2t_0-t)$, we observe that $a_{\mathrm{S}}'(t)=-a'(2t_0-t)$ and $b_{\mathrm{S}}'(t)=-b'(2t_0-t)$. If $a(t)$ and $b(t)$ solve \eqref{eq:edosqk} for a given $\Lambda$, then $a_{\mathrm{S}}(t)$ and $b_{\mathrm{S}}(t)$ solve \eqref{eq:edoskspace}. Furthermore, $a_{\mathrm{S}}(t_0)=a(t_0)=b_{\mathrm{S}}(t_0)=b(t_0)=1$ and we conclude.
%Just see that under the change $t\rightarrow 2t_0-t$, then $a' \rightarrow -a'$ and $b' \rightarrow -b'$ and equations \eqref{eq:edosqk} with $\varepsilon=-1$ turn into equations \eqref{eq:edoskspace}.
\end{proof}
\begin{rem}
If $\mathcal{I}^{\mathrm{timelike}}$ denotes the interval in which a timelike solution $(a(t),b(t))$ is defined, then the spacelike counterpart $(a(2t_0-t),b(2t_0-t))$ is defined in the interval $\mathcal{I}^{\mathrm{spacelike}}=\{t \in \mathbb{R}\, | \, 2t_0-t \in \mathcal{I}^{\mathrm{timelike}}\}$.
\end{rem}
Given this correspondence between timelike and spacelike quaternionic paraK\"ahler Heisenberg four-manifolds, it is natural to split the study into stationary, negative and positive spacelike quaternionic paraK\"ahler Heisenberg four-manifolds, which are obtained from the associated stationary, negative and positive timelike counterparts. 

%We proceed in an analogous way to the study of spacelike Einstein Heisenberg four-manifolds. We want to find the unique solutions to the equations of Proposition \eqref{prop:eqsspace} for each possible of $\Lambda$.
\subsubsection{Stationary spacelike quaternionic paraK\"ahler Heisenberg four-manifolds}
%{\color{red}Recall (see Definition~\ref{def:stationary}) that a spacelike quaternionic paraK\"ahler Heisenberg four-manifold is called stationary if and only if $\Lambda=6 k^2$.}

\begin{mydef}
\label{def:stationaryspace}
A spacelike  quaternionic (para)K\"ahler Heisenberg four-manifold is said to be stationary if $\Lambda=6k^2$.
\end{mydef}

 As in the timelike case, the name stationary comes from the fact that if we set $b=c$ and $h=0$ in the expression for the Ricci tensor in Proposition \ref{prop:riccispace}:
\begin{eqnarray}
\ric^g(\partial_t,\partial_t) &=& -2\lambda^2-\mu^2+2\lambda'+\mu'\, ,\\
\ric^g(\fre_1^t,\fre_1^t) &=& -\frac{2 k^2 b^4}{a^2}-2\lambda^2 -\mu \lambda+\lambda' \, , \\
\ric^g( \fre_2^t, \fre_2^t) &=& \frac{2 k^2 b^4}{a^2}+2\lambda^2+\lambda\mu-\lambda'\, , \\
\ric^g(\fre_3^t,\fre_3^t)&=&-\frac{2 k^2 b^4}{a^2}+2\lambda \mu+\mu^2-\mu'\,,
\end{eqnarray}
where $\mu=(\log a(t) )'$ and  $\lambda=(\log b(t))'$, then if we set $\mu'=\lambda'=0$ we have that $\Lambda=6 k^2$ and hence the name stationary. 

\begin{prop}
\label{prop:statspace}
All stationary spacelike quaternionic paraK\"ahler Heisenberg four-manifolds are given by :
\begin{equation}
a(t)=e^{2k(t-t_0)}\,, \quad b(t)=e^{k(t-t_0)} \, .
\end{equation}
These solutions are isometric to an open orbit of a four-dimensional solvable subgroup (which contains a Heisenberg subgroup) of $\mathrm{SL}(3,\mathbb{R})$ on the symmetric space:
\begin{equation}
\frac{\mathrm{SL}(3,\mathbb{R})}{\mathrm{S}(\mathrm{GL}(1,\mathbb{R})\times \mathrm{GL}(2,\mathbb{R}))}\,.
\label{eq:spacelike}
\end{equation}
Furthermore, they all are incomplete.
\end{prop}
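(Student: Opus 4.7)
The plan is to exploit the correspondence of Proposition~\ref{prop:truco} rather than redo the analysis from scratch: since stationary timelike quaternionic paraK\"ahler Heisenberg four-manifolds were completely classified in Proposition~\ref{prop:statsol} (with $\varepsilon = -1$ and $\Lambda = 6k^2$, solution $(a,b) = (e^{-2k(t-t_0)}, e^{-k(t-t_0)})$), applying the involution $t \mapsto 2t_0 - t$ to this solution produces the claimed spacelike solution $(a,b) = (e^{2k(t-t_0)}, e^{k(t-t_0)})$. To verify this is indeed the only stationary spacelike solution, I would first check that the stationarity condition $\mu' = \lambda' = 0$ (with $\mu = (\log a)'$, $\lambda = (\log b)'$) applied to the Ricci components in Proposition~\ref{prop:riccispace} with $c=b$, $h=0$ forces consistency of $b^4/a^2 = $ const, hence $\mu = 2\lambda$, and then $\Lambda = 6\lambda^2$ combined with $\lambda^2 = k^2 b^4 / a^2$ at $t_0$ yields $\lambda = \pm k$, $\Lambda = 6k^2$. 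The sign is then pinned down by the half-conformal-flatness condition (analogously to the timelike case, one sign gives self-dual and the other anti-self-dual Weyl tensor). Uniqueness within the larger class of spacelike quaternionic paraK\"ahler Heisenberg four-manifolds of the same $\Lambda$ then follows from the analog of the ODE existence-uniqueness argument in Proposition~\ref{prop:statsol}, transported via Proposition~\ref{prop:truco}.

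Next I would establish that the resulting manifold is locally symmetric by computing $\nabla R^g$ directly on the explicit metric
\[
g = -\d t^2 + e^{-2k(t-t_0)}\!\left(-\fre^1_{t_0}\otimes\fre^1_{t_0} + \fre^2_{t_0}\otimes\fre^2_{t_0}\right) + e^{-4k(t-t_0)}\,\fre^3_{t_0}\otimes\fre^3_{t_0},
\]
and checking $\nabla R^g = 0$ (a finite computation in the orthonormal frame, as the functions are pure exponentials). Given local symmetry, quaternionic paraK\"ahler type, neutral signature, and dimension four, one consults the classification of pseudo-Riemannian symmetric spaces of quaternionic (para)K\"ahler type (\cite{W}, \cite{AC,Krahe}); there is a unique candidate of the correct signature and non-zero scalar curvature, namely $\mathrm{SL}(3,\mathbb{R})/\mathrm{S}(\mathrm{GL}(1,\mathbb{R})\times\mathrm{GL}(2,\mathbb{R}))$, and matching the curvature tensors pins down the local isometry.

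To promote this local identification to the statement that $(M,g)$ is an open orbit of a four-dimensional solvable subgroup, I would exhibit the extra isometry explicitly: the one-parameter group of Heisenberg automorphisms $(x,y,z) \mapsto (e^{s}x, e^{s}y, e^{2s}z)$ combined with the translation $t \mapsto t + s/k$ preserves $g$ (this is visible from the exponential factors $e^{-k(t-t_0)}, e^{-2k(t-t_0)}$ appearing with the Heisenberg coframe weights $1,1,2$). This extends the Heisenberg action to a simply transitive action of a four-dimensional solvable Lie group on $\mathbb{R}\times \mathrm H$, and this solvable group must embed as the corresponding Iwasawa-type subgroup of the isometry group $\mathrm{SL}(3,\mathbb R)$ of the symmetric space, with its orbit being open.

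The main obstacle I expect is the incompleteness claim. The topological argument used in Proposition~\ref{prop:statsol} exploited that $\mathrm{SU}(1,2)/\mathrm{S}(\mathrm{U}(1)\times\mathrm{U}(1,1))\simeq S^2$ homotopically, hence cannot be diffeomorphic to $\mathbb{R}^4$. For the spacelike case, $\mathrm{SL}(3,\mathbb{R})/\mathrm{S}(\mathrm{GL}(1,\mathbb{R})\times\mathrm{GL}(2,\mathbb{R}))$ fibers (as a homogeneous bundle) over $\mathbb{RP}^2$ with fiber $\mathbb{R}^2$ and is therefore \emph{not} contractible (e.g.\ its fundamental group is $\mathbb{Z}/2$, or equivalently its universal cover has the homotopy type of $S^2$), so it cannot coincide with $\mathbb{R}\times\mathrm H \cong \mathbb{R}^4$. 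Since the solvable group acts simply transitively on its orbit, that orbit is diffeomorphic to $\mathbb R^4$ and is thus a proper open subset of the symmetric space; completeness would force it to equal the whole symmetric space, giving a contradiction. Alternatively, if the topological argument proves delicate to make rigorous, I would fall back on constructing an explicit incomplete geodesic of finite affine length reaching the boundary of the orbit, modelled on the length computations used in Propositions~\ref{prop:neg(time)q(para)} and \ref{prop:pos(time)q(para)}.
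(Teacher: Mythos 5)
Your proposal is correct and follows the paper's proof in all essentials: the solution is produced from the timelike stationary solution via the involution of Proposition~\ref{prop:truco}, uniqueness is inherited from Propositions~\ref{prop:truco} and \ref{prop:statsol}, and the identification with $\mathrm{SL}(3,\mathbb{R})/\mathrm{S}(\mathrm{GL}(1,\mathbb{R})\times\mathrm{GL}(2,\mathbb{R}))$ goes through $\nabla \mathrm{R}^g=0$, the classification of quaternionic (para)K\"ahler symmetric spaces, and the explicit extra one-parameter group of isometries extending $\mathrm{H}$ to a simply transitive solvable group. The one place where you genuinely diverge is the incompleteness claim. The paper simply transfers incompleteness from the timelike stationary solution, asserting that it ``does not change under the change of coordinate $t\mapsto 2t_0-t$''; since the timelike and spacelike stationary metrics are modelled on different symmetric spaces (only the ODE solutions correspond under Proposition~\ref{prop:truco}, not the geometries), this transfer really amounts to saying that the topological argument of Proposition~\ref{prop:statsol} applies mutatis mutandis. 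You instead run that topological argument directly: $\mathrm{SL}(3,\mathbb{R})/\mathrm{S}(\mathrm{GL}(1,\mathbb{R})\times\mathrm{GL}(2,\mathbb{R}))$ deformation retracts onto $\mathrm{SO}(3)/\mathrm{O}(2)\cong\mathbb{R}\mathrm{P}^2$, hence is not diffeomorphic to $\mathbb{R}^4$, so the simply transitive solvable orbit is a proper open subset, and a geodesically complete open subset of a connected pseudo-Riemannian manifold would have to be the whole space. This is slightly more work than the paper's one-line transfer but is self-contained and makes explicit exactly what the transfer is relying on; your fallback via explicit finite-length geodesics is not needed.
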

\begin{proof}
These stationary solutions are just obtained by using Proposition \ref{prop:truco} and carrying out the change $t \rightarrow 2t_0-t$ in the stationary solutions found in \eqref{eq:statsol}.  Also, the fact that these solutions are the unique stationary spacelike quaternionic paraK\"ahler Heisenberg four-manifolds follows by the use of Propositions \ref{prop:truco} and \ref{prop:statsol}. 

In complete analogy with the timelike case, we encounter that $\nabla^g \mathrm{R}^g=0$, where $\mathrm{R}^g$ is Riemann curvature tensor of $g$.  Upon use of the classification of pseudo-Riemannian symmetric spaces of quaternionic paraK\"ahler type \cite{AC,Krahe}, we conclude (by comparison of curvature tensors) that the resulting space is locally isometric to the symmetric space \eqref{eq:spacelike}. In particular, the solutions turn out to be isometric to a left-invariant metric on a simply transitive solvable subgroup of $\mathrm{SL}(3,\mathbb{R})$. This can be seen by following analogous arguments to that of the proof of Proposition \ref{prop:statsol}. Finally, we infer that the underlying pseudo-Riemannian manifold is incomplete since the timelike quaternionic paraK\"ahler solution, from which the spacelike one is obtained, is incomplete (and this property does not change under a change of coordinate $t \rightarrow 2t_0-t$). %To see this it suffices to observe that the Heisenberg group is included in a four-dimensional group of isometries, which is precisely the above-mentioned Iwasawa group. In fact, the one-parameter group $t\mapsto t+t_0$, $x\mapsto e^{\l t_0}x$, $y\mapsto e^{\l t_0}y$, $z\mapsto e^{2\l t_0}z$ acts by isometries,  enlarging the Heisenberg group by a one-parametric group of automorphisms to the aforementioned solvable group. 
\end{proof}
\textbf{Proof of Theorem \ref{thm:introstat}}.   It follows directly upon consideration of Propositions \ref{prop:statsol} and \ref{prop:statspace}. 
\subsubsection{Negative and positive spacelike quaternionic paraK\"ahler Heisenberg four-manifolds}

\begin{mydef}
A spacelike quaternionic paraK\"ahler Heisenberg four-manifold is said to be negative if $\Lambda <0$. Similarly, it is said to be positive if $\Lambda >0$ but $\Lambda \neq 6 k^2$.
\end{mydef}
By the use of the Proposition \ref{prop:truco} together with the classification of timelike quaternionic paraK\"ahler Heisenberg four-manifolds, we may actually obtain all negative and positive spacelike counterparts.

\begin{prop}\label{neg_space_q_para}
Let $(a(t),b(t))$ as in \eqref{eq:uhmpos}. Then $(a(2t_0-t), b(2t_0-t))$ are all solutions to \eqref{eq:edoskspace} with $\Lambda <0$ and, consequently, all the negative spacelike quaternionic paraK\"ahler Heisenberg four-manifolds. They all are incomplete.
\end{prop}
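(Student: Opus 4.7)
The plan is to exploit the duality between timelike and spacelike quaternionic paraK\"ahler Heisenberg four-manifolds established in Proposition~\ref{prop:truco}. Observe that the sign matching works out: a spacelike solution has $\varepsilon=-1$, so the condition $\Lambda<0$ for a \emph{negative} spacelike solution translates to $\varepsilon\Lambda>0$ on the timelike side, which is precisely the \emph{positive} timelike case classified in Proposition~\ref{prop:pos(time)q(para)}. Consequently, the pair $(a(t),b(t))$ from \eqref{eq:uhmpos}, which solves \eqref{eq:edosqk} for $\varepsilon=-1$ and any $\Lambda<0$, yields via $t\mapsto 2t_0-t$ a solution to \eqref{eq:edoskspace} for the same value of $\Lambda$, by direct application of Proposition~\ref{prop:truco}.

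To see these are \emph{all} the negative spacelike solutions, I would argue in two steps. First, by Proposition~\ref{prop:eqsspace}, any negative spacelike quaternionic paraK\"ahler Heisenberg four-manifold is encoded by a pair $(a(t),b(t))$ satisfying the ODE system \eqref{eq:edoskspace} with the initial condition $a(t_0)=b(t_0)=1$. Second, Proposition~\ref{prop:truco} (applied in reverse, by setting $\tilde a(t)=a(2t_0-t)$, $\tilde b(t)=b(2t_0-t)$) produces a corresponding timelike solution for the \emph{same} value of $\Lambda$, i.e., a positive timelike solution. By the uniqueness statement in Proposition~\ref{prop:pos(time)q(para)} (which in turn rests on Proposition~\ref{prop:inivaluepos}), this timelike solution must coincide with \eqref{eq:uhmpos}. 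Reverting the substitution gives our spacelike solution. Thus the map is surjective onto the class of negative spacelike solutions.

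For the incompleteness assertion, the simplest route is to observe that the diffeomorphism $t\mapsto 2t_0-t$ is an isometry between $(\mathcal I^{\mathrm{timelike}}\times \mathrm{H}, -\d t^2+\chi_t)$ and its spacelike counterpart on $\mathcal I^{\mathrm{spacelike}}\times \mathrm{H}$ (up to the identification of frames already built into Proposition~\ref{prop:truco}). Since geodesic (in)completeness is preserved under isometries, the incompleteness of the positive timelike solutions established in Proposition~\ref{prop:pos(time)q(para)} transfers directly to the negative spacelike solutions. Alternatively, one could repeat the length estimate verbatim in the $\rho$-coordinate of \eqref{eq:defrhopos}, which remains finite on the relevant boundary segment.

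The main subtlety is the bookkeeping around the sign conventions and the identification of $\varepsilon\Lambda$ versus $\Lambda$ in the two frameworks; once this is cleanly tracked, the result is essentially a corollary of Propositions~\ref{prop:truco} and \ref{prop:pos(time)q(para)}, with no further analytic work required.
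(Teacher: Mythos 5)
Your proposal follows the paper's proof essentially verbatim: the paper also derives both the classification and the incompleteness of the negative spacelike solutions as an immediate corollary of Proposition~\ref{prop:truco} together with the incompleteness of the corresponding (positive) timelike solutions, and your sign bookkeeping ($\Lambda<0$ with $\varepsilon=-1$ giving $\varepsilon\Lambda>0$ on the timelike side) and your surjectivity argument via Proposition~\ref{prop:eqsspace} and the uniqueness in Proposition~\ref{prop:pos(time)q(para)} are exactly right. The one point to correct is your primary justification of incompleteness: the map $t\mapsto 2t_0-t$ is \emph{not} an isometry between the timelike solution and its spacelike counterpart. The two metrics differ genuinely — under the natural identification of frames the pullback of the spacelike metric has the opposite sign on the central direction and on one of the non-central directions, and indeed the causal character of the Heisenberg center (timelike versus spacelike) distinguishes them. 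What actually transfers is the \emph{argument}: the $\d t^2$ (equivalently $\d\rho^2$) part of the metric is unchanged under the reflection, so the normal geodesic reaching the boundary of the $\rho$-domain has the same finite length in both cases. Your fallback sentence ("repeat the length estimate verbatim in the $\rho$-coordinate") is the correct justification and is what the paper's terse phrase "a geometric property that does not change after the trivial change of coordinate" is implicitly relying on; you should promote that to the main argument and drop the isometry claim.
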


\begin{prop}\label{pos_space_q_para}
Let $(a_1(t),b_1(t))$, $(a_2(t),b_2(t))$ and $(a_3(t),b_3(t))$ be as in \eqref{eq:uhm}, \eqref{eq:uhm2} and \eqref{eq:uhm3}, respectively. Then $(a_1(2t_0-t),b_1(2t_0-t))$ (defined for all $\Lambda >0$), $(a_2(2t_0-t),b_2(2t_0-t))$ and $(a_3(2t_0-t),b_3(2t_0-t))$ (the last two defined for all $\Lambda \geq 81 k^2$) are all solutions to \eqref{eq:edoskspace} with $\Lambda >0$ and $\Lambda \neq 6 k^2$ and, consequently, all the positive spacelike quaternionic paraK\"ahler Heisenberg four-manifolds. Solutions $(a_2(2t_0-t),b_2(2t_0-t))$ and $(a_3(2t_0-t),b_3(2t_0-t))$,  as well as those arising from $(a_1(2t_0-t),b_1(2t_0-t))$ for $\Lambda<6k^2$, are incomplete\footnote{We expect, in the light of Remark \ref{rem:incomplequizas}, solutions $(a_1(2t_0-t),b_1(2t_0-t))$ with $\Lambda>6k^2$ to be incomplete too.}.
 \end{prop}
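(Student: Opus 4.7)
The proof reduces almost entirely to Proposition~\ref{prop:truco}, which establishes a one-to-one correspondence $t\mapsto 2t_0-t$ between solutions of \eqref{eq:edosqk} with $\varepsilon=-1$ and solutions of \eqref{eq:edoskspace} for the same value of $\Lambda$. Timelike quaternionic paraK\"ahler Heisenberg four-manifolds have $\varepsilon=-1$, so the ``negative'' timelike class ($\varepsilon\Lambda<0$) coincides with the range $\Lambda>0$, which is precisely the positive spacelike range (modulo the stationary value $\Lambda=6k^2$, which we exclude and which corresponds to the stationary timelike case $\varepsilon\Lambda=-6k^2$ treated separately).

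First, I would verify that each $(a_j(2t_0-t),b_j(2t_0-t))$ for $j=1,2,3$ solves \eqref{eq:edoskspace} with the correct initial condition $a(t_0)=b(t_0)=1$. This is immediate from Proposition~\ref{prop:truco} together with Proposition~\ref{prop:neg(time)q(para)}, since the involution $t\mapsto 2t_0-t$ fixes $t_0$. The parameter ranges $\Lambda>0$ for $(a_1(2t_0-t),b_1(2t_0-t))$ and $\Lambda\geq 81k^2$ for the other two carry over directly from the corresponding ranges $\varepsilon\Lambda<0$ and $\varepsilon\Lambda\leq -81k^2$ (with $\varepsilon=-1$) in Proposition~\ref{prop:neg(time)q(para)}.

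Exhaustiveness follows by running the involution in the opposite direction: any positive spacelike quaternionic paraK\"ahler Heisenberg four-manifold gives a solution $(a(t),b(t))$ of \eqref{eq:edoskspace}, and applying the reverse of Proposition~\ref{prop:truco} (a symmetric argument, since $t\mapsto 2t_0-t$ is an involution) produces a solution of \eqref{eq:edosqk} with $\varepsilon=-1$ and the same $\Lambda>0$, i.e.\ a negative timelike solution. By the exhaustiveness clause of Proposition~\ref{prop:neg(time)q(para)} it must coincide with one of the $(a_j(t),b_j(t))$; reflecting time again recovers the spacelike solution.

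For the incompleteness assertions, the substitution $t\mapsto 2t_0-t$ reflects the domain without altering its boundary behavior, so the finite-length geodesic estimates used to prove incompleteness in Proposition~\ref{prop:neg(time)q(para)} transfer directly. More precisely, those estimates were performed using the coordinate $\rho$ of \eqref{eq:defrho}, and the finiteness of the length integrals depends only on the behavior of $\rho+\gamma$ and $\rho+2\gamma$ at the boundary of the $\rho$-domain, which is unaffected by the time reflection. The step that requires most care is matching the ranges: the condition $\Lambda<6k^2$ for $(a_1(2t_0-t),b_1(2t_0-t))$ corresponds to $\varepsilon\Lambda>-6k^2$ on the timelike side, precisely the range for which incompleteness was established in Proposition~\ref{prop:neg(time)q(para)}. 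The remaining case $\Lambda>6k^2$ is only expected to be incomplete in parallel with Remark~\ref{rem:incomplequizas}, as noted in the footnote.
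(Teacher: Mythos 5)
Your proposal is correct and follows essentially the same route as the paper: the paper's proof is a one-line appeal to Proposition~\ref{prop:truco} together with the observation that incompleteness is a geometric property unaffected by the coordinate reflection $t\mapsto 2t_0-t$. If anything, your write-up is more careful than the paper's, since you explicitly match the parameter ranges under $\varepsilon=-1$ and correctly isolate the case $\Lambda>6k^2$, for which incompleteness is only conjectural (Remark~\ref{rem:incomplequizas}), whereas the paper's proof loosely asserts that ``all timelike quaternionic paraK\"ahler Heisenberg four-manifolds are incomplete.''
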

\begin{proof}
The previous two Propositions are shown by direct use of Proposition \ref{prop:truco} and by the fact that all timelike quaternionic paraK\"ahler Heisenberg four-manifolds are incomplete, a geometric property that does not change after the trivial change of coordinate $t \rightarrow 2t_0-t$.
\end{proof}
\begin{rem}
Note that negative timelike quaternionic paraK\"ahler Heisenberg four-manifolds are in correspondence with their positive spacelike counterparts and vice versa. However, we must bear in mind that in the negative timelike case $\Lambda>0$ while $\Lambda <0$ for positive timelike quaternionic paraK\"ahler Heisenberg four-manifolds, and thus the results are consistent. 
\end{rem}
\begin{rem}
Positive spacelike quaternionic paraK\"ahler Heisenberg four-manifolds correspond as well to neutral-signature analogues of the one-loop deformed universal hypermultiplet, since they can be obtained from the so-called Euclidean supergravity $c$-map \cite{CMMS2}. In fact, if in Equation (59) of Reference \cite{DV} we choose $\bar{M}$ to be a point, $\epsilon_1=1$, $\epsilon_2=\pm 1$, $I=0$, $z^0=1$ and $F_0=\frac{i_{\epsilon_1}}{2} (X^0)^2$, $e^{\mathcal{K}}=-1/2$ and $(\hat{H})^{ab}=\mathrm{diag}(1,-1)$, we observe that after an equivalent procedure to that of \ref{rem:reluhmvc} we have, up to a global sign, the positive spacelike quaternionic paraK\"ahler Heisenberg four-manifolds derived before. Regarding their negative counterparts, we may interpret them as negatively-curved versions of the neutral-signature one-loop deformed universal hypermultiplet.
%Negative and positive spacelike quaternionic paraK\"ahler Heisenberg four-manifolds are to interpreted, in analogy with the timelike cases, as one-loop deformed universal hypermultiplets when the scalar manifold is of neutral signature.  
\end{rem}
\textbf{Proof of Theorem \ref{thm:alllspaceqknonstat}}.  Just by taking into account Propositions \ref{prop:truco}, \ref{neg_space_q_para} and \ref{pos_space_q_para}. 

\subsection{Lightlike quaternionic paraK\"ahler Heisenberg four-manifolds}

Finally we classify all lightlike quaternionic paraK\"ahler Heisenberg four-manifolds. We will make use of the ansatz \eqref{eq:anslightsim}, which gives us a simple way to describe, through a suitable Witt basis $\{\fre_u^t, \fre_v^t, \fre_3^t\}$, the corresponding metrics of lightlike quaternionic paraK\"ahler Heisenberg four-manifolds. We rewrite here this ansatz,  valid in principle in a subinterval $\mathcal{I}'_l \subset \mathcal{I}$:
\begin{equation}
U_W^t=\begin{pmatrix}
1 & 0& 0\\
f(t) & b(t) & p(t) \\
0 &0 & a(t)\\ \end{pmatrix}\,, \quad [\fre_v^{t_0}, \fre_3^{t_0}]=-2 k \fre_u^{t_0}\,, \quad k > 0 \,,
\label{eq:anslightsim2}
\end{equation}
We write first the Ricci curvature tensor $\ric^g$ arising from \eqref{eq:anslightsim2}. 
\begin{prop}
\label{prop:riccilight}
Let $(M,g)$ be a lightlike quaternionic paraK\"ahler Heisenberg four-manifold. The non-zero components of the Ricci curvature tensor $\ric^g$ read:
\begin{eqnarray}
\label{eq:riccitt}
\ric^g(\p_t, \p_t)&=&\frac{a a''-2 (a')^2}{a^2}-\frac{3 (b')^2-2 b b''}{2 b^2}\,, \\ \label{eq:riccilighteuev}\ric^g(\fre_u^t, \fre_v^t)&=& \frac{(b')^2}{b^2}+ \frac{a'b'-a b''}{2a b }\, , \\ \label{eq:riccilighte2}
\ric^g(\fre_v^t, \fre_v^t) & =&  \frac{a' (b f'-f b')}{ab}+\frac{(p b'-bp')^2}{2a^2 b^2}-\frac{3 f (b')^2-b(3  b'f'+f b''))}{b^2 } - \frac{f''}{a^2}\,,\\
 \label{eq:riccilighte2e3}
\ric^g(\fre_v^t, \fre_3^t)&=&\frac{-2 a p (b')^2+b(2 a b' p'+p(-3 a' b'+a b''))+b^2(3 a' p'-a p'')}{2a^2 b^2}\,, \\
\label{eq:riccilighte3}
\ric^g(\fre_3^t, \fre_3^t) &=&\frac{2 (a')^2 -a a''}{a^2}+\frac{a'b'}{a b}\,.
\end{eqnarray}
\end{prop}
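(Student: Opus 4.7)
The computation is, in spirit, parallel to the ones carried out for Propositions~\ref{prop:riccitime} and~\ref{prop:riccispace}, but one has to work with a Witt frame rather than an orthonormal one. My plan is to reduce everything to Cartan's structure equations written in the frame $\{\partial_t, \fre_u^t, \fre_v^t, \fre_3^t\}$, whose dual coframe $\{\d t, \fre^u_t, \fre^v_t, \fre^3_t\}$ is obtained by inverting the matrix $U_W^t$ in \eqref{eq:anslightsim2}. With respect to this coframe the metric is
\begin{equation*}
g = -\d t^2 + 2\fre^u_t\odot \fre^v_t + \fre^3_t\otimes \fre^3_t,
\end{equation*}
so the non-zero entries of the metric matrix in the indices $(t,u,v,3)$ are $g_{tt}=-1$, $g_{uv}=g_{vu}=1$, $g_{33}=1$.

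First I would compute the commutators $[\partial_t, \fre^t_i]$ in terms of $U_W^t$ and $(U_W^t)^{-1}$, and combine them with the spatial brackets, which at arbitrary time take the form obtained by pushing the initial Heisenberg bracket $[\fre_v^{t_0},\fre_3^{t_0}]=-2k\fre_u^{t_0}$ forward under $(U_W^t)^{-1}$. The sparseness of \eqref{eq:anslightsim2} — in particular the first row $(1,0,0)$ and third column being mostly zero — will make the resulting structure functions rather simple. I would then feed these structure functions into the Koszul formula to read off the connection one-forms $\omega^i{}_j$, taking care of the indefinite character of the Witt pairing: index lowering exchanges $u\leftrightarrow v$, which is what makes the off-diagonal component $\ric^g(\fre^t_u,\fre^t_v)$ in \eqref{eq:riccilighteuev} genuinely non-trivial.

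From the connection one-forms I would compute the curvature two-forms $\Omega^i{}_j = \d\omega^i{}_j + \omega^i{}_k\wedge \omega^k{}_j$ and extract the components of the Riemann tensor, contracting to obtain Ricci. An organizational observation that saves work: the matrix $U_W^t$ is block upper-triangular relative to the splitting $\Span\{\fre_u^{t_0}\}\oplus \Span\{\fre_v^{t_0},\fre_3^{t_0}\}$, and the lightlike direction $\fre_u^t=\fre_u^{t_0}$ is $t$-independent with $\partial_t$ acting trivially on it. This immediately forces $\ric^g(\partial_t,\fre^t_u)=\ric^g(\fre^t_u,\fre^t_u)=\ric^g(\fre^t_u,\fre^t_3)=0$ and explains why only the five components listed in \eqref{eq:riccitt}--\eqref{eq:riccilighte3} can survive.

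The main obstacle is not conceptual but purely bookkeeping: there are seven unknown functions $a,b,f,p$ plus their first and second derivatives, together with the $\d t$-components of the connection, and the sign conventions for the Witt basis have to be applied consistently. As indicated in the introduction of the paper, this is most safely handled in \emph{Mathematica}, and the remaining part of the verification is then routine symbolic manipulation that matches the stated expressions term by term.
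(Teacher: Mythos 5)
Your proposal is correct and coincides with the paper's approach: the paper's proof of Proposition~\ref{prop:riccilight} is simply ``by direct computation'' (carried out symbolically in \emph{Mathematica}), and your plan — Cartan structure equations in the Witt coframe dual to $U_W^t$, with the index lowering $u\leftrightarrow v$ handled via $\eta$ — is exactly that computation spelled out. The only caveat is that your structural argument for the vanishing components directly covers $\ric^g(\partial_t,\fre_u^t)$, $\ric^g(\fre_u^t,\fre_u^t)$ and $\ric^g(\fre_u^t,\fre_3^t)$ but not $\ric^g(\partial_t,\fre_v^t)$ and $\ric^g(\partial_t,\fre_3^t)$, which also vanish for the ansatz \eqref{eq:anslightsim2} but only by the explicit computation (compare the general formulas in the proof of Proposition~\ref{prop:zcausalconst}, which reduce to zero once $h=j=0$, $c=1$).
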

\begin{proof}
By direct computation. 
\end{proof}
\begin{prop}
\label{prop:lightcase}
Let $(M,g)$ be a lightlike Heisenberg four-manifold. It is Einstein with Einstein constant $\Lambda \neq 0$ if and only if:
\begin{eqnarray*}
a''&=&-\Lambda  a+\frac{2(a')^2}{a}+a'\frac{b'}{b} \,,\\
b'' &=& -\Lambda b+\frac{7 (b')^2}{4 b}\,, \\
f''&=&\frac{a'(-f b'+b f')}{a b}+\frac{(p b'-b p')^2}{2 a^2 b^2}+\frac{(-3 f (b')^2+ 3 b b' f'+b f b''))}{b^2 }\, , \\
p'' &=& \frac{-2 p (b')^2}{b^2}+\frac{3 a' p'}{a}+\frac{2 a b'p' + p(-3 a' b'+a b'')}{a b}\,,\\
a'&=&a \left (\frac{\Lambda b}{b'}-\frac{b'}{4b} \right)\,.
\end{eqnarray*}
 \end{prop}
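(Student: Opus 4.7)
My plan is to prove the proposition by direct translation of the Einstein equations $\ric^g = \Lambda g$ into the stated system using Proposition~\ref{prop:riccilight}, followed by a triangular algebraic reduction. Since $\{\p_t, \fre_u^t, \fre_v^t, \fre_3^t\}$ is a Witt frame for $g$, with $g(\p_t,\p_t) = -1$, $g(\fre_u^t,\fre_v^t) = 1$, $g(\fre_3^t, \fre_3^t) = 1$ and all other inner products vanishing, the condition $\ric^g = \Lambda g$ is equivalent to the five scalar equations $\ric^g(\p_t,\p_t) = -\Lambda$, $\ric^g(\fre_u^t, \fre_v^t) = \Lambda$, $\ric^g(\fre_3^t,\fre_3^t) = \Lambda$, $\ric^g(\fre_v^t,\fre_v^t) = 0$ and $\ric^g(\fre_v^t,\fre_3^t) = 0$, matching in count the five ODEs of the statement.

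Three of these equations yield three of the ODEs by elementary rearrangement of the formulas in Proposition~\ref{prop:riccilight}: $\ric^g(\fre_3^t,\fre_3^t) = \Lambda$ involves only $a, b$ and can be solved for $a''$, giving the first equation; $\ric^g(\fre_v^t,\fre_v^t) = 0$ is the unique equation in which $f''$ appears (linearly, with coefficient $-1/a^2$), so it can be solved for $f''$; and $\ric^g(\fre_v^t,\fre_3^t) = 0$ analogously solves for $p''$. This accounts for the $a''$, $f''$ and $p''$ equations.

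The interesting step is extracting the expressions for $a'$ and the simplified $b''$. The equation $\ric^g(\fre_u^t,\fre_v^t) = \Lambda$ gives a preliminary $b'' = a'b'/a + 2(b')^2/b - 2\Lambda b$ that still contains $a'$. Substituting both the formula for $a''$ derived above and this preliminary $b''$ into the remaining equation $\ric^g(\p_t,\p_t) = -\Lambda$, the $(a')^2/a^2$ contributions cancel and one is left with the purely first-order relation
\begin{equation*}
\frac{4 a'b'}{ab} + \frac{(b')^2}{b^2} = 4\Lambda.
\end{equation*}
Solving this algebraically for $a'$ (assuming $b'\neq 0$, which can be arranged on an open subinterval of $\mathcal{I}'_l$ by continuity of the initial data) yields the last ODE of the statement, and feeding this expression back into the preliminary form of $b''$ collapses it to $b'' = -\Lambda b + 7(b')^2/(4b)$. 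The converse direction is then immediate by reading the same chain of substitutions backwards: the five ODEs, combined with the explicit formulas of Proposition~\ref{prop:riccilight}, reproduce the five Einstein scalar equations.

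The only real obstacle is clerical: one must carefully track signs and coefficients when substituting the $a''$ and preliminary $b''$ expressions into $\ric^g(\p_t,\p_t) = -\Lambda$, and verify that the quadratic terms in $a'$ cancel precisely so that a genuinely first-order algebraic constraint emerges. Apart from this bookkeeping, every step is an explicit rearrangement of the formulas already listed in Proposition~\ref{prop:riccilight}.
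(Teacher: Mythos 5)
Your proposal is correct and follows essentially the same route as the paper: solve the $\fre_3^t\fre_3^t$, $\fre_v^t\fre_v^t$ and $\fre_v^t\fre_3^t$ components for $a''$, $f''$ and $p''$, then combine the two remaining components ($\p_t\p_t$ and $\fre_u^t\fre_v^t$) to extract the first-order constraint on $a'$ and the reduced equation for $b''$. The only cosmetic point is that $b'\neq 0$ need not be ``arranged'' on a subinterval: the relation $\frac{4a'b'}{ab}+\frac{(b')^2}{b^2}=4\Lambda$ together with $\Lambda\neq 0$ forces $b'$ to be nowhere vanishing.
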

\begin{proof}
Assume $\ric^g=\Lambda g$. Using Proposition \ref{prop:riccilight}, from \eqref{eq:riccilighte3} we may solve for $a''$:
\begin{equation}
\label{eq:secondalight}
a''=-\Lambda  a+\frac{2(a')^2}{a}+a' \frac{b'}{b}\,.
\end{equation}
Now from \eqref{eq:riccilighte2} and  \eqref{eq:riccilighte2e3} we may solve for $f''$ and $p''$:
\begin{eqnarray*}
f''&=& \frac{a'(-f b'+b f')}{a b}+\frac{(p b'-b p')^2}{2 a^2 b^2}+\frac{(-3 f (b')^2+ 3 b b' f'+b f b''))}{b^2 }\, , \\
p'' &=& \frac{-2 p (b')^2}{b^2}+\frac{3 a' p'}{a}+\frac{2 a b'p' + p(-3 a' b'+a b'')}{a b}\,.
\end{eqnarray*}
Substituting these results we have obtained on the rest of the components of the Ricci tensor, the Einstein condition is fulfilled if:
\begin{equation*}
a'=a \left (\frac{\Lambda b}{b'}-\frac{b'}{4b} \right)\,, \quad b''=-\Lambda b+\frac{7 (b')^2}{4 b}\,.
\end{equation*}
After imposing this last condition, we observe that $\ric^g =\Lambda g$. Checking that the derivative of previous equation is consistent with \eqref{eq:secondalight}, we collect now all the results we have obtained and we conclude. 
\end{proof}
In order to classify all lightlike quaternionic paraK\"ahler Heisenberg four-manifolds, we need to know the conditions for an Einstein lightlike Heisenberg four-manifold to be half-conformally flat, which is equivalent to $\mathcal{W}^g=0$.

%elucidated the restrictions the Einstein condition imposes on $U^t$ and its derivatives, we continue by investigating the underlying lightlike Heisenberg four manifold is half-conformally flat as well. In particular, we have the following result:
\begin{prop}
Let $(M,g)$ be a lightlike Einstein Heisenberg four-manifold with Einstein constant $\Lambda \neq 0$. The Weyl self-duality tensor $\mathcal{W}^g$ vanishes identically if and only if:
\begin{equation*}
b(t)=e^{\frac{2 \sqrt{\Lambda}}{\sqrt{3}} (t-t_0) }\, , \quad a(t)=e^{\sqrt{\frac{\Lambda}{3}}(t-t_0)}\, , \quad f(t)=p(t)=0\,, \quad \Lambda>0\,.
\end{equation*}
In particular, if $(M,g)$ is half-conformally flat, it is actually conformally flat.
\end{prop}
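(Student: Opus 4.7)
The strategy is to substitute the Einstein relations from Proposition~\ref{prop:lightcase} into the components of $\mathcal{W}^g$ and then extract successive constraints. After eliminating $a'$ and all second derivatives, each component of $\mathcal{W}^g$ becomes a rational expression in $a,b,b',f,f',p,p'$ and the constants $k,\Lambda$.

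First I would isolate the $b$-sector. A suitable component of $\mathcal{W}^g$, expected to depend only on $b$ and $b'$, combined with the Einstein ODE $b''=-\Lambda b+7(b')^2/(4b)$, should force $v:=b'/b$ to be constant. Indeed, setting $v=b'/b$ the $b$-equation becomes $v'=-\Lambda+3v^2/4$, and the additional Weyl constraint would yield $v'\equiv 0$. This implies $v^2=4\Lambda/3$, hence in particular $\Lambda>0$ and $v=\pm 2\sqrt{\Lambda/3}$. Integrating with $b(t_0)=1$ gives $b(t)=e^{\pm 2\sqrt{\Lambda/3}(t-t_0)}$. Plugging into the algebraic identity $a'/a=\Lambda b/b'-b'/(4b)=\Lambda/v-v/4$ from Proposition~\ref{prop:lightcase} then yields $a'/a=\pm\sqrt{\Lambda/3}$, so $a(t)=e^{\pm\sqrt{\Lambda/3}(t-t_0)}$; the negative-sign branch is related to the positive one by the reflection $t\mapsto 2t_0-t$, so without loss of generality we recover the stated exponentials.

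With $a,b$ now fixed, the remaining components of $\mathcal{W}^g$ reduce to linear expressions in $f,f',p,p'$ with analytic time-dependent coefficients. A judicious combination of two of them (for example the mixed $(\fre_v^t,\fre_3^t)$-components) should produce first-order linear relations in $(f,p)$; together with the initial values $f(t_0)=p(t_0)=0$ coming from $U_W^{t_0}=\mathrm{Id}$ and the Picard-Lindel\"of uniqueness theorem, these force $f\equiv p\equiv 0$ on $\mathcal{I}'_l$ and hence on all of $\mathcal{I}$ by connectedness and real-analyticity of the coefficients. The main obstacle here is algebraic bookkeeping: $\mathcal{W}^g$ has many independent components mixing $f,p,f',p'$ with $b$-dependent coefficients, and one must pick the right linear combinations to separate the $b$-sector from the $(f,p)$-sector and close the system, as in the earlier Propositions~\ref{prop:edosqk} and \ref{prop:eqsspace}. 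A computer algebra system is essentially indispensable.

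For the \emph{in particular} claim, once $f=p=0$ and $a,b$ are the stated exponentials the metric reads
\begin{equation*}
g=-\d t^2+e^{-2\sqrt{\Lambda/3}(t-t_0)}\bigl(\fre_{t_0}^u\odot\fre_{t_0}^v+\fre_{t_0}^3\otimes\fre_{t_0}^3\bigr),
\end{equation*}
and a direct computation of the full Weyl tensor $\mathrm{W}^g$ on this explicit metric verifies that all its components vanish identically, so half-conformal flatness upgrades to genuine conformal flatness on this locus.
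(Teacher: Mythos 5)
Your plan follows essentially the same route as the paper's proof: one component of $\mathcal{W}^g$ (namely $\mathcal{W}^g(\fre_u^t,\partial_t,\partial_t,\fre_v^t)=\Lambda/6-\tfrac18(b'/b)^2$) pins down $\Lambda=\tfrac34(b'/b)^2>0$ and hence $b'/b$ constant, two further components give the homogeneous first-order relations $bp'-pb'=0$ and $bf'-fb'=0$ which with $f(t_0)=p(t_0)=0$ force $f\equiv p\equiv 0$, and the residual ODEs $a'=ab'/(2b)$, $(b'/b)^2=4\Lambda/3$ integrate to the stated exponentials, after which the full Weyl tensor is checked to vanish. Your explicit disposal of the negative root of $b'/b$ via the reflection $t\mapsto 2t_0-t$ is a small point the paper leaves implicit, but otherwise the two arguments coincide.
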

\begin{proof}
Using the results of Proposition \ref{prop:lightcase}, we find:
\begin{equation*}
\mathcal{W}(\fre_u^t,\partial_t,\partial_t, \fre_v^t)=\frac{\Lambda}{6} -\frac{1}{8}\left ( \frac{b'}{b} \right)^2\,.
\end{equation*}
Hence:
\begin{equation}
\label{eq:lambdalight}
\Lambda=\frac{3}{4}\left (\frac{b'}{b} \right )^2>0\,.
\end{equation}
Note that this equation is consistent, since by differentiating with respect to $t$ and using the expression for $b''$ found in Proposition \ref{prop:lightcase}, we indeed get that the right-hand side of the previous equation is indeed zero. Similarly, we have that:
\begin{equation*}
\mathcal{W}(\partial_t, \fre_3^t, \fre_v^t, \fre_3^t)=\frac{b'(-p b'+b p')}{4 a b^2 }=0\,.
\end{equation*}
If $b'=0$, this would imply in turn that $\Lambda=0$. Since we are assuming that $\Lambda \neq 0$, we find that $-p b'+b p'=0$, which is equivalent to $p(t)=0$ on taking into account that $p(t_0)=0$ and that $b(t) \neq 0$ since otherwise $U^t_W$ would be degenerate. Finally, on substituting these results, we also have that: 
\begin{equation*}
\mathcal{W}( \fre_v^t,\partial_t, \fre_v^t, \partial_t)=\frac{ b'(-f b'+b f')}{2 b^2}=0\,.
\end{equation*}
From here we find that $f(t)=0$ and then we encounter that not only the self-duality tensor vanishes, but also the Weyl tensor itself, so the subsequent metric is conformally flat. Simplifying the result for $a'$ obtained in Proposition \ref{prop:lightcase} by using \eqref{eq:lambdalight}, we have that the remaining differential equations to solve are: 
\begin{equation*}
a'=a\frac{b' }{2b}\, , \quad \Lambda=\frac{3}{4}\left (\frac{b'}{b}\right )^2\,.
\end{equation*}
The solution to the previous system of ordinary ODEs with the initial conditions $a(t_0)=b(t_0)=1$ is:
\begin{equation*}
b(t)= e^{\frac{2 \sqrt{\Lambda}}{\sqrt{3}} (t-t_0) }\, , \quad a(t)=e^{\sqrt{\frac{\Lambda}{3}}(t-t_0)}\, ,
\end{equation*}
and we conclude. 
\end{proof}
\begin{rem}
The metric $g$, in terms of the coframe $\{\d t, \fre^1_{t_0},\fre^2_{t_0},\fre^3_{t_0}\}$ reads:
\begin{equation*}
\begin{split}
g&=-\d t^2 +\frac{1}{b(t)} \fre^u_{t_0} \odot \fre^v_{t_0}+\frac{1}{(a(t))^2}\fre^3_{t_0} \otimes \fre^3_{t_0}\\&=-\d t^2+e^{-\frac{2 \sqrt{\Lambda}}{\sqrt{3}} (t-t_0) }\fre^u_{t_0} \odot \fre^v_{t_0}+e^{-\frac{2 \sqrt{\Lambda}}{\sqrt{3}} (t-t_0) } \fre^3_{t_0} \otimes \fre^3_{t_0}\,.
\end{split}
\end{equation*}
We observe from the previous expression that the metric is indeed conformally flat: by defining a new coordinate $\d t=e^{-\frac{ \sqrt{\Lambda}}{\sqrt{3}} (t(\tilde{t})-t_0) } \d \tilde{t}$, it is clear that the metric is conformally flat with conformal factor $e^{-\frac{2 \sqrt{\Lambda}}{\sqrt{3}}( t(\tilde{t})-t_0) }$.
\end{rem}
\begin{prop}\label{prop:light}
All lightlike quaternionic paraK\"ahler Heisenberg four-manifolds are conformally flat and isometric to the solution given by:
\begin{equation}
b(t)=e^{\frac{2 \sqrt{\Lambda}}{\sqrt{3}} (t-t_0) }\, , \quad a(t)=e^{\sqrt{\frac{\Lambda}{3}}(t-t_0)}\, , \quad f(t)=p(t)=0\,, \quad \Lambda>0\,,
\label{eq:solproplight}
\end{equation}
where $t \in \mathcal{I}'_l=\mathcal{I}=\mathbb{R}$. The subsequent pseudo-Riemannian manifolds are incomplete.
\end{prop}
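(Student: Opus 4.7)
The preceding proposition already does most of the work: within a subinterval $\mathcal{I}'_l \subset \mathcal{I}$ it pins down, for each Einstein constant $\Lambda > 0$, the explicit exponential form \eqref{eq:solproplight} of $(a,b,f,p)$ and shows that the resulting metric is conformally flat. The plan here is then to argue that this local solution in fact extends to $\mathcal{I}'_l = \mathcal{I} = \mathbb{R}$, to conclude uniqueness up to the choice of $\Lambda$, and to prove geodesic incompleteness. The first two points are immediate: $a(t) = e^{\sqrt{\Lambda/3}(t-t_0)}$ and $b(t) = e^{2\sqrt{\Lambda/3}(t-t_0)}$ are smooth and positive on all of $\mathbb{R}$, so $\det U^t_W = a(t)b(t) \neq 0$ throughout, and the uniqueness theorem for the ODE system of the preceding proposition (on the open set $\{a,b\neq 0\}$) forces the explicit exponentials to be the unique maximal solution.

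For incompleteness, the plan is to exhibit an explicit null geodesic that escapes to $t = +\infty$ in finite affine parameter. Setting $\lambda := \sqrt{\Lambda/3}$ and substituting the solution, the metric takes the form
\begin{equation*}
g = -dt^2 + e^{-2\lambda(t-t_0)}\bigl(w^1 \odot w^2 + (w^3)^2\bigr),
\end{equation*}
with the left-invariant coframe $(w^i)$ from \eqref{eq:dualbasisheis}. The left-invariant vector fields $w_1, w_2, w_3$ are then Killing, as they generate right translations of $\mathrm{H}$ (which preserve any left-invariant metric on $\mathrm{H}$) and they annihilate the conformal factor (which depends only on $t$). Writing $p_i := g(w_i, \dot\gamma)$ for the resulting constants of motion and solving, one obtains
\begin{equation*}
\dot x = e^{2\lambda(t-t_0)} p_1, \qquad \dot y = e^{2\lambda(t-t_0)} p_3, \qquad \dot z + kx\dot y - ky\dot x = e^{2\lambda(t-t_0)} p_2.
\end{equation*}

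Substituting these back into the null condition $g(\dot\gamma,\dot\gamma) = 0$, the Heisenberg part collapses to $g^H(\dot q,\dot q) = K\, e^{4\lambda(t-t_0)}$ with $K := 2 p_1 p_2 + p_3^2$, yielding the single separable ODE
\begin{equation*}
\dot t^2 = K\, e^{2\lambda(t-t_0)}.
\end{equation*}
Choosing any initial data with $K > 0$ (for example $p_1 = p_2 = 1$, $p_3 = 0$ at the identity of $\mathrm{H}$ at $t = t_0$), the integration of $\dot t = \sqrt{K}\, e^{\lambda(t-t_0)}$ with $t(0) = t_0$ gives $e^{-\lambda(t-t_0)} = 1 - \lambda\sqrt{K}\,\sigma$, so $t(\sigma) \to +\infty$ as $\sigma \to (\lambda\sqrt{K})^{-1}$, a finite affine-parameter value. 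Since the geodesic thus leaves every compact subset of $M$ in finite affine parameter, it cannot be extended, and $(M,g)$ is geodesically incomplete.

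The main task beyond the previous proposition is organising the incompleteness step cleanly. The key observation is that fully exploiting the three Heisenberg Killing fields reduces the null geodesic equations to a single separable ODE for $t(\sigma)$: the Witt structure $g^H(w_i,w_j) = \eta_{ij}$ makes $g^H(\dot q,\dot q)$ equal to a constant times $e^{4\lambda(t-t_0)}$, giving the tidy formula above. Without this reduction one would face a coupled nonlinear system; with it the incompleteness follows from a half-line computation.
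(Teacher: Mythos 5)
The first half of your argument (maximality of the interval and uniqueness via the ODE system of the preceding proposition) is fine and agrees with the paper. The incompleteness step, however, contains a genuine error. You claim that the left-invariant vector fields $w_1,w_2,w_3$ of \eqref{eq:coordh} are Killing because ``they generate right translations of $\mathrm{H}$, which preserve any left-invariant metric.'' Right translations do \emph{not} preserve a left-invariant metric unless the metric is bi-invariant; here $R_a=L_a\circ\mathrm{conj}_{a^{-1}}$ and $\mathrm{Ad}_{a^{-1}}$ acts on $\mathfrak h$ by non-trivial shears along the center, which do not preserve $\chi_t$. The actual Killing fields induced by the $\mathrm{H}$-action are the \emph{right}-invariant fields $\tilde w_1=\partial_z$, $\tilde w_2=\partial_x-ky\,\partial_z$, $\tilde w_3=\partial_y+kx\,\partial_z$ (only the central $w_1=\tilde w_1$ coincides with your choice). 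Hence $p_2=g(w_2,\dot\gamma)$ and $p_3=g(w_3,\dot\gamma)$ are not first integrals, and the curve you build from $p_1=p_2=1$, $p_3=0$ is not a geodesic: along it $y\equiv 0$ and $\dot x=e^{2\lambda(t-t_0)}$, and the $y$-component of the geodesic equation reads
\begin{equation*}
\frac{d}{d\sigma}\Bigl(e^{-2\lambda(t-t_0)}\bigl(kx\dot x+\dot y\bigr)\Bigr)=-k\,e^{-2\lambda(t-t_0)}\dot x^2\,,
\end{equation*}
whose left-hand side equals $+k e^{2\lambda(t-t_0)}$ while the right-hand side equals $-k e^{2\lambda(t-t_0)}$. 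Equivalently, the genuine conserved momentum $g(\tilde w_3,\dot\gamma)=e^{-2\lambda(t-t_0)}(2kx\dot x+\dot y)=2kx\,p_1$ is manifestly non-constant along your curve.

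The strategy itself (an explicit inextendible geodesic reaching $t=+\infty$ in finite affine parameter) is exactly the paper's, and it can be salvaged: with the correct Killing fields the null condition becomes $\dot t^2=e^{2\lambda(t-t_0)}\bigl(2P_1P_2+4kyP_1^2+(P_3-2kxP_1)^2\bigr)$, which closes to a single separable ODE for $t$ only when $P_1=g(\partial_z,\dot\gamma)=0$, i.e.\ $\dot x=0$. Choosing $P_1=P_2=0$, $P_3\neq 0$ confines the geodesic to the totally geodesic slice $\{x=z=0\}$ with induced metric $-\d t^2+e^{-2\lambda(t-t_0)}\d y^2$, and $\dot t=|P_3|e^{\lambda(t-t_0)}$ then gives the finite-parameter blow-up you want; this is precisely (the null analogue of) the explicit geodesic $\bigl(t(\tau),0,y(\tau),0\bigr)$ written in the paper's proof. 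Also note a minor internal inconsistency: you state $g^H(\dot q,\dot q)=Ke^{4\lambda(t-t_0)}$ but then use $\dot t^2=Ke^{2\lambda(t-t_0)}$; the conformal factor $e^{-2\lambda(t-t_0)}$ should already have been absorbed, so the displayed value of $g^H(\dot q,\dot q)$ is off by a factor $e^{2\lambda(t-t_0)}$.
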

\begin{proof}
The fact that $t \in \mathcal{I}'_l=\mathcal{I}=\mathbb{R}$ follows by seeing that \eqref{eq:solproplight} is defined in the entire real line. Consequently, we only have to show that the corresponding metrics are incomplete. For that, let us set $t_0=0$ for the sake of simplicity (we can always achieve it by shifting the time coordinate) and let us use the coordinates \eqref{eq:coordh} for $\mathrm{H}$. We consider the geodesic $\Gamma: J \rightarrow (\mathcal{I} \times \mathrm{H})$ with $J \subset \mathbb{R}$ whose coordinates (which we assume to be affinely-parametrized by $\tau$) are given by:
\begin{equation}
\left ( -\frac{\sqrt{3}}{2\sqrt{\Lambda}} \log \left (  \frac{4 \Lambda \sinh^2( \tau/2+B)}{3}\right),0, \int_{0}^\tau \frac{3}{4 \Lambda \sinh^2(\sigma/2+B)} \d \sigma,0 \right)\,,
\end{equation}
where we choose a certain $B \in \mathbb{R}^{>0}$. This geodesic is not defined $\forall \tau \in \mathbb{R}$ and  we conclude that the underlying pseudo-Riemannian manifold cannot be complete. 
\end{proof}
\begin{rem}
\label{rem:lorsollight}
Differently from what happens in the previous cases, when the Heisenberg center is lightlike it is possible to find Lorentzian conformally flat\footnote{
Note that for a Lorentzian metric the two components of the Weyl tensor ($\pm i$-eigenvectors of the Hodge operator) are related by complex conjugation. 
So half-conformal flatness in the Lorentzian setting implies conformal flatness.} Einstein metrics. 
Setting the metric to be:
\begin{equation}
g^{\mathrm{Lor}}=\d t^2 + \fre_t^{u} \odot \fre_t^{v} + \fre_t^3 \otimes \fre_t^3\,,
\end{equation}
we can find, in a completely analogous manner to that presented in the study of lightlike quaternionic paraK\"ahler Heisenberg four-manifolds, that the following choice for the functions in $U_W^t$:
\begin{equation}
b(t)=e^{\frac{2 \sqrt{-\Lambda}}{\sqrt{3}} (t-t_0) }\, , \quad a(t)=e^{\sqrt{\frac{-\Lambda}{3}}(t-t_0)}\, ,\quad f(t)=p(t)=0\,, \quad \Lambda<0\,
\end{equation}
yields Lorentzian conformally flat Einstein metrics. As for  the neutral-signature metrics, these metrics are incomplete. 
\end{rem}
\textbf{Proof of Theorem \ref{thm:alllightqknonstat}}.  It follows on noticing that Theorem  \ref{thm:alllightqknonstat} is equivalent to Proposition \ref{prop:light}.
\section{(Para)HyperK\"ahler Heisenberg four-manifolds}

\label{sec:4}
%\subsection{Hyper-K\"ahler manifolds}

%Brief review, revising in particular the algebraic properties of the K\"ahler forms which characterise a HK manifold.

In this section we are going to classify all Heisenberg four-manifolds which are furthermore (para)hyperK\"ahler. For that, we begin by providing the most adequate definition of (para)hyperK\"ahler four-manifold  for our purposes.
\begin{mydef}
Let $(M,g)$ be a neutral-signature or Riemannian orientable four-manifold. It is said to be (para)hyperK\"ahler if there exist three closed self-dual two-forms (called K\"ahler forms) $\omega_i$ on $M$, with $i=1,2,3$ or $i=u,v,3$ which satisfy the condition:
\begin{equation}
\omega_i \wedge \omega_j=2 \varepsilon \eta_{i j}\mathrm{dvol}_M\, , \quad i,j=1,2,3\,, \quad \mathrm{or} \quad i,j=u,v,3\,,
\end{equation}
where $\varepsilon=1$ if $(M,g)$ is Riemannian and $\varepsilon=-1$ if $(M,g)$ is of  neutral signature, $\eta$ is given by \eqref{eq:etaexpression} and where $\mathrm{dvol}_M$ denotes the canonical volume form given by the metric and the fixed orientation on $(M,g)$.
\end{mydef}
\begin{rem}
\label{rem:parallelhk}
The usual definition of (para)hyperK\"ahler four-manifold $(M,g)$ is that of a  Riemannian (resp. neutral-signature) four-manifold whose holonomy group is contained in the symplectic group $\mathrm{Sp}(1)$ (resp. in the pseudo-symplectic group $\mathrm{SL}(2,\mathbb{R})$). This is equivalent to the existence of three parallel (with respect to the Levi-Civita connection of $(M,g)$) (para)complex structures $J_i \in \mathrm{End}(TM)$ which  are antisymmetric and satisfy\footnote{We allow the possibility of having two (integrable) nilpotent endomorphisms and a paracomplex structure. Note however that these can be obtained from linear combinations of two paracomplex structures and a complex one satisfying the relations $J_i \circ J_j+J_j \circ J_i=-\varepsilon \eta_{ij} \mathrm{Id}_{TM}$.} $J_i \circ J_j+J_j \circ J_i=-\varepsilon \eta_{ij} \mathrm{Id}_{TM}$. The relation between this definition and ours is given by the Hitchin lemma \cite{H}, which can be rephrased by saying that the existence of three closed and self-dual two-forms $\omega_i \in \Omega^2(M)$ on $(M,g)$ satisfying $\omega_i \wedge \omega_j=2 \varepsilon \eta_{i j}\mathrm{dvol}_M$ is equivalent to the existence of three (integrable) parallel (para)complex structures $J_i$ satisfying the  aforementioned relations. 
\end{rem}
\begin{rem}
We would like to remind the reader that (para)hyperK\"ahler manifolds are Ricci-flat. 
\end{rem}
From the previous Remark and Proposition \ref{prop:zcausalconst}, we have that timelike, spacelike and lightlike parahyperK\"ahler Heisenberg four-manifolds comprise all possible types of parahyperK\"ahler Heisenberg four-manifolds. We introduce the following notation.
\begin{mydef}
Let $(M,g)$ be a Riemannian or neutral-signature Heisenberg four-manifold and let $\{\mathfrak{e}_i^t\}_{t \in \mathcal{I}}$ be a family of orthonormal or Witt frames on $\mathrm{H}$. For $t_0 \in \mathcal{I}$, we say $\{\mathfrak{e}_i^t\}_{t \in \mathcal{I}}$ is a $t_0$-canonical frame if the only non-vanishing Lie bracket at $t_0$ is:
\begin{itemize}
\item For $(M,g)$ a Riemannian or a timelike Heisenberg four-manifold:
\begin{equation}
\label{eq:hkcanlbriem}
[\fre_2^{t_0}, \fre_3^{t_0}]=-2 k \fre_1^{t_0}\, , \quad k >0\,.
\end{equation}
\item For spacelike Heisenberg four-manifolds:
\begin{equation}
\label{eq:hkcanlbspace}
[\fre_1^{t_0}, \fre_2^{t_0}]=-2 k \fre_3^{t_0}\, , \quad k >0\,.
\end{equation}
\item For lightlike Heisenberg four-manifolds:
\begin{equation}
\label{eq:hkcanlblight}
[\fre_v^{t_0}, \fre_3^{t_0}]=-2 k \fre_u^{t_0}\, , \quad k >0\,.
\end{equation}
\end{itemize}
\end{mydef}
\begin{lem}
\label{lemma:hkres}
Let $(M,g)$ be a Riemannian or neutral-signature Heisenberg four-manifold.  It is (para)hyperK\"ahler if and only if, for $t_0 \in \mathcal{I}$, there exists a $t_0$-canonical frame $\{\mathfrak{e}_i^t\}_{t \in \mathcal{I}}$ such that the following self-dual two-forms:
\begin{equation}
\omega_i= \sigma_{ij} \d t \wedge \mathfrak{e}_t^j+ \sigma_{ij}  \star (\d t \wedge \mathfrak{e}_t^j)\, .
	\label{eq:hkforms}
\end{equation}
are closed, where $\{\mathfrak{e}^i_t\}_{t \in \mathcal{I}}$ denotes the associated family of coframes dual to the $t_0$-canonical frame and $\sigma_{i_1j}\in C^\infty(\mathcal{I})$, $\sigma_{i_2j}, \sigma_{i_3j} \in C^\infty(M)$ such that $(\sigma_{ij}(p))\in \mathrm{SO}(\mathbb{R}^3,\eta )$ for all $p\in M$ with:
\begin{itemize}
\item $i_1=1$, $i_2=2$ and $i_3=3$ if $(M,g)$ is a Riemannian or a timelike Heisenberg four-manifold,
\item $i_1=3$, $i_2=1$ and $i_3=2$ if $(M,g)$ is a spacelike Heisenberg four-manifold,
\item $i_1=u$, $i_2=v$ and $i_3=3$ if $(M,g)$ is a  lightlike Heisenberg four-manifold.
\end{itemize}
%$\sigma_{1j}\in C^\infty(\mathcal{I})$, $\sigma_{2j}, \sigma_{3j} \in C^\infty(M)$ such that} {\color{green}$(\sigma_{ij}(p))\in \mathrm{SO}(\mathbb{R}^3,\eta )$ for all $p\in M$} {\color{green} or ... or ...}
\end{lem}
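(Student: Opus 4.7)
The plan is to pass through Hitchin's lemma as stated in Remark~\ref{rem:parallelhk}: $(M,g)$ is (para)hyperK\"ahler if and only if there exist three closed self-dual two-forms $\omega_i$ on $M$ satisfying the volume normalization $\omega_i\wedge \omega_j = 2\varepsilon \eta_{ij}\mathrm{dvol}_M$. The task then reduces to parametrizing such triples in terms of a $t_0$-canonical frame. The key algebraic input, which I would establish first by a direct Hodge-star computation carried out separately in the orthonormal (Riemannian/timelike), orthonormal (spacelike), and Witt (lightlike) settings, is that for any $t_0$-canonical frame the three two-forms $\theta^j := \d t \wedge \fre^j_t + \star (\d t \wedge \fre^j_t)$ are self-dual, form a pointwise basis of the rank-three bundle $\Lambda^2_+T^*M$, and satisfy $\theta^i\wedge \theta^j = 2\varepsilon \eta^{ij}\mathrm{dvol}_M$, with $\eta$ as in (\ref{eq:etaexpression}).

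The backward implication is then immediate. Given closed forms $\omega_i = \sigma_{ij}\theta^j$ with $\sigma(p)\in \mathrm{SO}(\mathbb{R}^3,\eta)$, self-duality follows from that of the $\theta^j$, and the identity $\omega_i \wedge \omega_k = 2\varepsilon(\sigma \eta \sigma^T)_{ik}\mathrm{dvol}_M = 2\varepsilon \eta_{ik}\mathrm{dvol}_M$ yields the normalization, so Remark~\ref{rem:parallelhk} produces the (para)hyperK\"ahler structure. For the forward implication, take any K\"ahler triple $(\omega_i)$ supplied by the (para)hyperK\"ahler structure and expand it in the basis $\theta^j$ coming from an arbitrary initial $t_0$-canonical frame, obtaining $\omega_i = \sigma_{ij}\theta^j$ for a smooth $\sigma:M\to \mathrm{GL}(3,\mathbb{R})$; the volume normalization combined with the product formula above forces $\sigma\eta\sigma^T = \eta$ pointwise, and an orientation comparison puts $\sigma(p)$ in the identity component $\mathrm{SO}(\mathbb{R}^3,\eta)$.

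The last point is to arrange the regularity condition $\sigma_{i_1 j}\in C^\infty(\mathcal{I})$. Here I would exploit the residual freedom in the choice of $t_0$-canonical frame, namely replacement by $\tilde{\fre}^t_i = R(t)_{ij}\fre^t_j$ for any smooth $R:\mathcal{I}\to \mathrm{SO}(\mathbb{R}^3,\eta)$ with $R(t_0)$ preserving the commutation relation (\ref{eq:hkcanlbriem}), (\ref{eq:hkcanlbspace}) or (\ref{eq:hkcanlblight}); such a rotation acts on $\sigma$ by right multiplication by a purely $t$-dependent element of $\mathrm{SO}(\mathbb{R}^3,\eta)$. The idea is to choose $R(t)$ so that $\theta^{i_1}$ is aligned with the distinguished $\mathrm{H}$-invariant line in $\Lambda^2_+T^*M$ canonically attached to the one-dimensional centre $\mathcal{Z}_t$, after which the K\"ahler form $\omega_{i_1}$ lying in that line forces the row $\sigma_{i_1 j}(t,h)$ to be independent of $h\in \mathrm{H}$.

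The hard part will be precisely this last alignment step: one must show that an $\mathrm{H}$-invariant line in $\Lambda^2_+T^*M$ tied to $\mathcal{Z}_t$ exists, that the K\"ahler triple can be rotated (by a $t$-dependent element of the stabilizer of this line) to realise $\omega_{i_1}$ as the form associated to it, and that the resulting coefficients remain smooth in $t$. Because the stabilizer of $\theta^{i_1}$ inside $\mathrm{SO}(\mathbb{R}^3,\eta)$ depends on whether $\mathcal{Z}_t$ is timelike, spacelike or lightlike, this final step will naturally split into the three cases of the lemma.
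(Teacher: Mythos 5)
Your reduction to Hitchin's lemma, the observation that the forms $\theta^j := \d t \wedge \fre^j_t + \star(\d t \wedge \fre^j_t)$ give a pointwise basis of $\Lambda^2_+T^*M$ with $\theta^i\wedge\theta^j = 2\varepsilon\eta^{ij}\mathrm{dvol}_M$, and the resulting identification of Kähler triples with $\mathrm{SO}(\mathbb{R}^3,\eta)$-valued coefficient matrices all match the paper's strategy, and the backward implication is handled the same way. The problem is the forward direction: the entire content of the lemma beyond Hitchin is the regularity claim that one row $\sigma_{i_1 j}$ can be taken to depend on $t$ alone, i.e.\ that some Kähler form in the triple is $\mathrm{H}$-invariant, and this is exactly the step you label ``the hard part'' and do not carry out. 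Your proposed mechanism --- aligning $\theta^{i_1}$ with ``the distinguished $\mathrm{H}$-invariant line in $\Lambda^2_+T^*M$ attached to $\mathcal{Z}_t$'' --- does not obviously get off the ground: since all three $\theta^j$ are built from left-invariant coframes, \emph{every} line with $t$-dependent coefficients in this basis is $\mathrm{H}$-invariant, so $\mathrm{H}$-invariance of a line is not a distinguishing feature, and nothing in your sketch shows that any of the \emph{parallel} forms $\omega_i$ actually lies in such a line. Likewise, the residual $t$-dependent right rotation of the frame acts on the columns of $\sigma$ and cannot remove a genuine $\mathrm{H}$-dependence from a row.

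The paper closes this gap with a Lie-theoretic fixed-point argument that is absent from your proposal. Let $V\cong\mathbb{R}^3$ be the space of parallel sections of the (para)quaternionic structure $Q$, equipped with $\langle A,B\rangle = -\tfrac{\varepsilon}{4}\tr AB$. Since $Q$ is one of the two simple ideals of $\mathfrak{so}(T_pM)$ it is preserved by orientation-preserving isometries, so $\mathrm{H}$ acts on $V$, giving a representation $\rho:\mathrm{H}\to\mathrm{SO}(V)$ whose image is a connected nilpotent subgroup of $\mathrm{SO}(3)$ or $\mathrm{SO}(2,1)$. Such a subgroup lies in $\mathrm{SO}(2)$, $\mathrm{SO}_0(1,1)$, or a unipotent one-parameter group, and in every case fixes a nonzero vector of $V$; completing that fixed vector to an orthonormal or Witt basis of $V$ yields a Kähler triple containing an $\mathrm{H}$-invariant member, whose coefficients relative to the left-invariant coframe therefore depend only on $t$. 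The causal type of the fixed vector (definite, spacelike, or lightlike stabilizer) is what produces the three cases $i_1\in\{1,3,u\}$ in the statement. Without this (or an equivalent) argument your proof is incomplete precisely at its essential point.
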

\begin{proof}
Assume first that the self-dual two-forms $\omega_i$  in Equation (\ref{eq:hkforms}) are closed. By direct computation, we check that $\omega_i \wedge \omega_j= 2 \varepsilon \eta_{ij} \mathrm{dvol}_M$ and, therefore, $(M,g)$ is (para)hyperK\"ahler. 

Conversely, assume that $(M,g)$ is a (para)hyperK\"ahler Heisenberg four-manifold. Let $\omega_i$ be the corresponding  K\"ahler forms. Note first that the (para)quaternionic structure $Q_p$ at any  point $p\in M$ is defined by one of the two simple ideals of $\mathfrak{so}(T_pM) = Q_p \oplus Q'_p$, where $Q_p\cong Q_p'\cong \mathfrak{su}(2)$ or $Q_p\cong Q_p'\cong \mathfrak{sl}(2,\mathbb{R})$. Therefore it is invariant under any orientation-preserving isometry. Consider the vector space $V\cong\mathbb{R}^3$ consisting of all parallel sections of $Q$ 
endowed with the Euclidean or Lorentzian scalar product $\langle A,B\rangle = -\frac{\varepsilon}{4} \tr AB$. Since the Heisenberg group acts through orientation-preserving isometries, we obtain a representation $\rho:\mathrm{H} \to \mathrm{SO}(V)$, whose image is a nilpotent subgroup of $\mathrm{SO}(V)$. In the Riemannian case, this leads us to the conclusion that the image of $\rho$ is contained in an $\mathrm{SO}(2)$-subgroup and, therefore, preserves a non-zero vector in $V$. In the neutral-signature case, the image is contained in a one-dimensional subgroup  conjugate to $\mathrm{SO}(2)$, 
$\mathrm{SO}_0(1,1)$ or to a unipotent group that preserves a lightlike vector.  Again, we can conclude that the representation $\rho$, independently of the signature, always leaves invariant a non-zero vector of $V$.
This implies there is an orthonormal or Witt basis $(J_1,J_2,J_3)$ of $V$ which contains an invariant 
element. (We recall  that according to our conventions $\langle J_i,J_j\rangle = \eta_{ij}$ with $\eta$ as in (\ref{eq:etaexpression}).) If the element has stabilizer $\mathrm{SO}(2)$ in $\mathrm{SO}(3)$, we can assume it is $J_1$. If it has stabilizer $\mathrm{SO}(1,1)$ we can take it to be $J_3$. If it is lightlike, the basis $(J_1,J_2,J_3)$ is a Witt basis 
$(J_u,J_v,J_3)$ and we can assume that the invariant element is $J_u$. 
Then the corresponding 
left-invariant K\"ahler forms $\omega_{i_1}$ can be chosen so that $\sigma_{i_1 j} \in C^\infty(\mathcal{I})$ with $i_1 \in \{1,3,u\}$ as in the the statement of the lemma and we conclude.

\end{proof}

After these preliminary results, now we continue with the classification of Riemannian and neutral-signature (para)hyperK\"ahler Heisenberg four-manifolds.

\subsection{HyperK\"ahler and timelike parahyperK\"ahler Heisenberg four-manifolds}

We carry out the classification of hyperK\"ahler and  timelike parahyperK\"ahler Heisenberg four-manifolds at once, since we will see that the procedure is strictly analogous. We fix a $t_0$-canonical frame $\{\mathfrak{e}_i^t\}_{t \in \mathcal{I}}$ which satisfies \eqref{eq:hkcanlbriem} and we set the ansatz \eqref{eq:ansriemsim} for the matrix $U^t$:%\footnote{This is the most general form of any time-dependent three-by-three matrix. We are not using a simplified ansatz for $U^t$, such as the one in \eqref{eq:ansriemsim}, because we are working in the specific orthonormal (or Witt in the lightlike case) frame $\fre_i^t$ in which the K\"ahler forms take the expression \eqref{eq:hkforms}. Hence, for the sake of clarity, we prefer to write a general ansatz for $U^t$.}  for the matrix $U^t$:
\begin{equation}
U^t=\begin{pmatrix}
a(t) &0& 0\\
0 & b(t) & 0 \\
0 & h(t) & c(t) \\
\end{pmatrix}\,,  \quad a,b,c,h \in C^\infty(\mathcal{I})\,.
\label{eq:ansuhkriem}
\end{equation}

%The previous proposition establishes that we may find all $\varepsilon\,$-hyperK\"ahler Heisenberg four-manifolds by studying when the triplet of two-forms at Proposition \ref{prop:hkforms} are closed. 

\begin{prop} \label{prop:hkriemsol}
Let $(M,g)$ be a (timelike) (para)hyperK\"ahler Heisenberg four-manifold. Then it is isometric to:
\begin{equation}
\label{eq:propsol1}
a=(1+3 k (t-t_0))^{1/3}\, ,\quad  b=c=(1+3 k (t-t_0))^{-1/3}\, , \quad h=0\,.
\end{equation}
%Let $(M,g)$ be a Heisenberg $\varepsilon\,$-hyper-K\"ahler 4-manifold. Then, given an orthonormal left-invariant coframe $\{e^1,e^2,e^3\}$ at $t=0$  in $H$, the metric $g$ admits an orthonormal left-invariant coframe $\{\d t, \fre_t^{\alpha}\}$ such that:
%\begin{equation}
%\label{eq:propsol1}
%\mathfrak{e}_t^1=\frac{1}{(1 + 3k t)^{1/3}} e^1\, , \quad \mathfrak{e}_t^2=(1+ 3kt)^{1/3} e^2 \, , \quad \mathfrak{e}_t^3= (1 %\pm 3kt)^{1/3} e^3\,,
%\end{equation}
The maximal domain  
of definition of these incomplete metrics is $(t_0-(3k)^{-1}, +\infty) \times \mathrm{H}$.
\end{prop}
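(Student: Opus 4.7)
The strategy is to combine Proposition \ref{prop:2ndNF} (which fixes the ansatz) with Lemma \ref{lemma:hkres} (which encodes the full (para)hyperK\"ahler data as closedness of three distinguished 2-forms).

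Since (para)hyperK\"ahler metrics are Ricci-flat, they are Einstein with $\Lambda=0$, so Proposition \ref{prop:2ndNF} applies and the adapted frame can be chosen so that $U^t$ takes the form \eqref{eq:ansuhkriem}. I then express the dual coframe $\fre^i_t$ in terms of $a,b,c,h$ and the left-invariant coframe at $t_0$, and compute its exterior derivative using $d\fre^1_{t_0}=2k\,\fre^2_{t_0}\wedge\fre^3_{t_0}$ together with $d\fre^{2}_{t_0}=d\fre^{3}_{t_0}=0$.

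The first step is to impose $d\omega_1=0$ for $\omega_1 = \sigma_{1j}(t)\bigl(d t\wedge \fre^j_t+\star(d t\wedge \fre^j_t)\bigr)$; note that $\sigma_{1j}$ depends only on $t$, reflecting the $\rho(\mathrm{H})$-invariant direction singled out in the proof of Lemma \ref{lemma:hkres}. Splitting the resulting identity into a $d t$-piece and a purely horizontal Heisenberg piece, and using that $(\sigma_{1j})$ is pointwise a unit $\eta$-vector, one is forced (up to sign) to $\sigma_{11}\equiv 1$, $\sigma_{12}=\sigma_{13}=0$, together with $h=0$ and the first-order relation $a'=k\,bc$.

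The second step is $d\omega_2=d\omega_3=0$ with $\sigma_{2j},\sigma_{3j}\in C^\infty(M)$. Decomposing each exterior derivative into horizontal and temporal pieces, and exploiting the pointwise $\mathrm{SO}(\mathbb{R}^3,\eta)$ condition on $(\sigma_{ij})$, yields integrability conditions that force $c=b$ and the complementary relation $ab\equiv 1$. Substituting into $a'=k\,bc$ gives the autonomous ODE $a'=k/a^2$, equivalently $(a^3)'=3k$, whose solution under $a(t_0)=1$ is $a^3=1+3k(t-t_0)$; this yields \eqref{eq:propsol1}. The maximal domain of $\mathcal{I}$ is then the connected component of $\{t\in\mathbb{R}: 1+3k(t-t_0)>0\}$ containing $t_0$, namely $(t_0-(3k)^{-1},+\infty)$. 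Incompleteness follows since the normal geodesic $t\mapsto (t,e)$ reaches the boundary $t=t_0-(3k)^{-1}$ in finite arc length $(3k)^{-1}$, while $\vert\fre_t^1\vert_g$ diverges there and obstructs any extension. The main obstacle is the second step: disentangling the $C^\infty(M)$-dependence of $\sigma_{2j},\sigma_{3j}$ from the frame constraints $c=b$ and $ab=1$ will require a careful separation-of-variables argument combined with the rotational freedom encoded by $(\sigma_{ij})\in\mathrm{SO}(\mathbb{R}^3,\eta)$.
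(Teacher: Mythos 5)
There is a genuine gap, and it sits exactly where you locate the ``main obstacle.'' Your first step claims that $\d\omega_1=0$ alone forces $\sigma_{11}\equiv 1$, $\sigma_{12}=\sigma_{13}=0$, $h=0$ and $a'=k\,bc$. It does not. With the ansatz \eqref{eq:ansuhkriem} one has $\fre_t^1=\tfrac{1}{a}\fre_{t_0}^1$ and $\fre_t^2\wedge\fre_t^3=\tfrac{1}{bc}\fre_{t_0}^2\wedge\fre_{t_0}^3$, so for $\sigma_{1j}=\delta_{1j}$ the form $\omega_1$ involves only $a$ and the product $bc$, and
\begin{equation*}
\d\omega_1=\Bigl(\bigl(\tfrac{\varepsilon}{bc}\bigr)'\mp\tfrac{2k}{a}\Bigr)\,\d t\wedge\fre_{t_0}^2\wedge\fre_{t_0}^3
\end{equation*}
is a \emph{single} scalar ODE relating $a$ and $bc$; it constrains neither $h$ nor the ratio $b/c$, and for general $\sigma_{1j}(t)$ one only picks up two further equations stating that $\sigma_{12}h/(abc)+\sigma_{13}/(ab)$ and $\sigma_{12}/(ac)$ are constant. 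So the conclusions you draw in step one are unavailable, and all the real work is pushed into step two (closedness of $\omega_2,\omega_3$ with coefficients in $C^\infty(M)$), which you do not carry out but only flag as requiring ``a careful separation-of-variables argument.'' As it stands the proposal derives neither $h=0$, nor $b=c$, nor $ab=1$, nor $a'=kbc$.

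The paper's proof takes a different and decisive shortcut: instead of closedness it imposes the \emph{parallelism} $\nabla\omega_1=0$ of the single $\mathrm{H}$-invariant form supplied by Lemma \ref{lemma:hkres}. Being a pointwise condition involving the Levi-Civita connection, this is much stronger than $\d\omega_1=0$ and directly yields the determined first-order system $a'=kbc$, $h'=(c'/c)h$, $b'/b=c'/c=-kbc/a$, $\sigma_{11}'=0$, whose unique solution with the given initial conditions is \eqref{eq:propsol1}; the existence of the full (para)hyperK\"ahler structure is then verified \emph{a posteriori} by exhibiting the three closed self-dual forms for the resulting metric. If you want to salvage your route via closedness only, you must actually write out $\d\omega_2=\d\omega_3=0$ with $\sigma_{2j},\sigma_{3j}$ functions on all of $M$ and extract $b=c$, $h=0$ and the remaining relation from that system --- which is precisely the computation your proposal defers. (Your final ODE integration and the identification of the maximal interval are fine once the reductions are in hand.)
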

\begin{proof}
%\footnote{It can be seen that the K\"ahler forms can be rotated through an orthogonal $\mathrm{SO}(2+\varepsilon,1-\varepsilon)$ matrix so that they take the same form for any Heisenberg frame $\{\partial_t, \fre_i^t\}$. However, we prefer to write $U^t$ with }
%In contrast to the situation for (para)quaternionic K\"ahler spaces, now we cannot bring \emph{a priori} $U^t$ into a more simplified expression because it is not clear whether for two different coframe basis $\{\d t, \fre_t^{j}\}$ and $\{\d t, \tilde{\fre}_t^{j}\}$
According to Lemma \ref{lemma:hkres}, there exists (at least) a K\"ahler form $\omega_1$ belonging to the (para)hyperK\"ahler structure which is additionally invariant under the Heisenberg group action. If $\omega_1=\sigma_{1j} \d t \wedge \mathfrak{e}_t^j+\sigma_{1j}  \star (\d t \wedge \mathfrak{e}_t^j) $, then it can be seen that the equation $\nabla \omega_1=0$ is equivalent to:
\begin{equation*}
\sigma'_{11}=0\, , \quad \frac{\sigma_{12}'}{\sigma_{13}}=-\frac{\sigma_{13}'}{\sigma_{12}}=\frac{hc'-h'c}{2bc}\,, \quad a'= k bc\, , \quad h'=\frac{c'}{c} h \, , \quad \frac{b'}{b}=\frac{c'}{c}=-k \frac{bc}{a}\,.
\end{equation*}
The unique solution to the previous system of ordinary differential equations with the initial conditions $a(t_0)=b(t_0)=c(t_0)=1$, $h(t_0)=0$ and $\sigma_{1j} \vert_{t_0}=\sigma_{1j}^0$, for $\sigma_{1j}^0 \in \mathbb{R}$, turns out to be:
\begin{equation}
\label{eq:provehkrt}
a=(1+3 k (t-t_0))^{1/3}\, ,\quad  b=c=(1+3 k (t-t_0))^{-1/3}\, , \quad h=0\, , \quad \sigma_{1j} =\sigma_{1j}^0\,.
\end{equation}
This solution is defined in the interval $\mathcal{I}=(t_0-(3k)^{-1}, +\infty)$ and we observe that the isometry type of $(M,g)$ is completely fixed. Now, using \eqref{eq:provehkrt}, we note that the following two-forms:
\begin{equation*}
\omega_i=\d t \wedge \fre_t^i + \star (\d t \wedge \fre_t^i)\,  , \quad i=1,2,3
\end{equation*}
are self-dual, closed and satisfy $\omega_i \wedge \omega_j= 2 \varepsilon \eta_{ij} \mathrm{dvol}_M$. Hence we conclude.
\end{proof}
\begin{rem}
In terms of the coframe $\{\d t,\mathfrak{e}_{t_0}^i\}$, the metric of a (timelike) (para)hyperK\"ahler Heisenberg four-manifold $(M,g)$ reads:
\begin{equation}
g=\varepsilon \d t^2+ \frac{\varepsilon}{(1+3 k (t-t_0))^{2/3}} \mathfrak{e}_{t_0}^1 \otimes \mathfrak{e}_{t_0}^1+ (1+  3k (t-t_0))^{2/3}\left ( \mathfrak{e}_{t_0}^2 \otimes \mathfrak{e}_{t_0}^2+\mathfrak{e}_{t_0}^3 \otimes \mathfrak{e}_{t_0}^3 \right ) \,.
\label{eq:mhk}
\end{equation}
We find that $(M,g)$ is Ricci-flat and that the Weyl tensor is antiself-dual.
\end{rem}
\begin{rem}\label{conf:Rem}
Redefine the time coordinate as $e^{3 k\tilde{t}}=1+3k (t-t_0)$. Then the metric $g$ reads:
\begin{equation}
g=\varepsilon e^{6 k \tilde{t}} \d \tilde{t}^2+\varepsilon  e^{-2k \tilde{t}}\fre_{t_0}^1 \otimes \fre_{t_0}^1 +e^{2k \tilde{t}} (\fre_{t_0}^2 \otimes \fre_{t_0}^2+\fre_{t_0}^3\otimes \fre_{t_0}^3)\,.
\end{equation}
Now if we consider the rescaled metric $\hat{g}=e^{-6 k \tilde{t}} g$, we observe that:
\begin{equation}
\hat{g}=\d \tilde{t}^2+\fre_{\tilde{t}}^1\otimes\fre_{\tilde{t}}^1+\fre_{\tilde{t}}^2\otimes \fre_{\tilde{t}}^2+\fre_{\tilde{t}}^3\otimes \fre_{\tilde{t}}^3\,,
\end{equation}
with $\fre_{\tilde{t}}^1=e^{-4k \tilde{t}} \fre_{t_0}^1$, $\fre_{\tilde{t}}^2=e^{-2 k\tilde{t}} \fre_{t_0}^2$ and $\fre_{\tilde{t}}^3=e^{-2k\tilde{t}} \fre_{t_0}^3$. We compute:
\begin{equation}
\d \fre_{\tilde{t}}^1=4 k \fre_{\tilde{t}}^1 \wedge \d \tilde{t} +2 k \fre_{\tilde{t}}^2 \wedge \fre_{\tilde{t}}^3\, , \quad \d \fre_{\tilde{t}}^2=2 k \fre_{\tilde{t}}^2 \wedge \d \tilde{t} \, , \quad \d \fre_{\tilde{t}}^3=2k \fre_{\tilde{t}}^3 \wedge \d \tilde{t}\,.
\end{equation}
So we observe that $\{\d t, \fre_{\tilde{t}}^1,\fre_{\tilde{t}}^2 ,\fre_{\tilde{t}}^3\}$ defines a left-invariant coframe on $\mathbb{R}^{+}\times H$. Therefore the singularity in the metric \eqref{eq:mhk} is just present up to a conformal factor, compare with \cite{DH}. 
\end{rem}
\subsection{Spacelike parahyperK\"ahler Heisenberg four-manifolds}

Now we continue with the classification of spacelike parahyperK\"ahler Heisenberg four-manifolds. We set a $t_0$-canonical frame $\{\mathfrak{e}_i^t\}_{t \in \mathcal{I}}$ which satisfies \eqref{eq:hkcanlbspace} and we use the ansatz \eqref{eq:ansspacesim} for the matrix $U^t$ (valid for a subinterval $\mathcal{I}'_s \subset \mathcal{I}$ containing $t_0$), which we rewrite here for the sake of clarity:
\begin{equation}
U^t=\begin{pmatrix}
c(t) &h(t) & 0\\
-h(t) & b(t) & 0 \\
0 & 0 & a(t) \\
\end{pmatrix}\,, \quad  a,b,c,h \in C^\infty(\mathcal{I}'_s)\,.
\label{eq:ansuhkspace}
\end{equation}
\begin{prop} \label{prop:hkspacesol}
Let $(M,g)$ be a spacelike  parahyperK\"ahler Heisenberg four-manifold. Then it is isometric to:
\begin{equation}
\label{eq:propsolspace1}
a=(1-3 k (t-t_0))^{1/3}\, ,\quad  b=c=(1-3 k (t-t_0))^{-1/3}\, , \quad h=0\,.
\end{equation}
The maximal domain  
of definition of these incomplete metrics is $(-\infty,t_0+(3k)^{-1}) \times \mathrm{H}$.
\end{prop}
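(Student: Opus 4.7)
The plan is to mirror the strategy of Proposition \ref{prop:hkriemsol}, adapting it to the spacelike ansatz \eqref{eq:ansuhkspace} and to the relabeling of the invariant direction prescribed by Lemma \ref{lemma:hkres}. Since the Heisenberg center is now spacelike, Lemma \ref{lemma:hkres} (case $i_1=3$) produces an $\mathrm{H}$-invariant K\"ahler form
\[
\omega_3 = \sigma_{3j}\, \d t\wedge \fre_t^j + \sigma_{3j}\, \star(\d t\wedge \fre_t^j), \qquad \sigma_{3j}\in C^\infty(\mathcal{I}'_s).
\]
By Remark \ref{rem:parallelhk}, the parahyperK\"ahler condition is equivalent to $\nabla \omega_3 = 0$ together with the existence of a compatible orthogonal triple, so the first step is to translate $\nabla \omega_3=0$ into ODEs for the functions $a,b,c,h$ and $\sigma_{3j}$ of the ansatz \eqref{eq:ansuhkspace}.

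Carrying out this translation---by essentially the same direct computation as in the proof of Proposition \ref{prop:hkriemsol}, but with $i_1=3$ and the Lie brackets \eqref{eq:hkcanlbspace}---should yield a system of the form
\[
\sigma_{33}'=0,\quad a'=-kbc,\quad h'=\tfrac{c'}{c}h,\quad \tfrac{b'}{b}=\tfrac{c'}{c}=k\tfrac{bc}{a},
\]
together with two coupled equations on $\sigma_{31},\sigma_{32}$; the sign reversals in $a'=-kbc$ and $b'/b=+kbc/a$ relative to the timelike case arise from the cyclic relabeling of the bracket relations. The initial data $a(t_0)=b(t_0)=c(t_0)=1$, $h(t_0)=0$ together with the uniqueness theorem for ODEs force $h\equiv 0$ and $b\equiv c$, and the remaining scalar equation $a'=-kb^2$, $b'=kb^3/a$ integrates explicitly to
\[
a(t)=(1-3k(t-t_0))^{1/3},\qquad b(t)=c(t)=(1-3k(t-t_0))^{-1/3},
\]
defined precisely on the maximal interval $\mathcal{I}=(-\infty,\,t_0+(3k)^{-1})$ where both expressions are real and positive; the openness/consistency argument employed in Proposition \ref{prop:eqsspace} then yields $\mathcal{I}'_s=\mathcal{I}$.

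To conclude that this data actually carries a full parahyperK\"ahler structure, I would check directly, as in the final step of the proof of Proposition \ref{prop:hkriemsol}, that the three self-dual two-forms $\omega_i=\d t\wedge\fre_t^i+\star(\d t\wedge\fre_t^i)$, $i=1,2,3$, are closed and satisfy the algebraic identities $\omega_i\wedge\omega_j=2\varepsilon\eta_{ij}\,\mathrm{dvol}_M$; Hitchin's lemma (Remark \ref{rem:parallelhk}) then produces the three parallel (para)complex structures. Incompleteness is immediate: the integral curve $t\mapsto(t,e)$ is a unit-speed timelike geodesic of finite proper time $(3k)^{-1}$ whose endpoint lies on the degeneracy locus $a=0$, which precludes smooth extension. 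The main technical burden is the derivation of the ODE system from $\nabla\omega_3=0$ in the spacelike ansatz, but this is mechanical once the Levi-Civita connection associated to \eqref{eq:ansuhkspace} is written down; everything else reduces to routine integration and verification.
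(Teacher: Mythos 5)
Your proposal follows essentially the same route as the paper's proof: invoke Lemma \ref{lemma:hkres} with $i_1=3$ to obtain an $\mathrm{H}$-invariant parallel form $\omega_3$, reduce $\nabla\omega_3=0$ to a first-order ODE system in the ansatz \eqref{eq:ansuhkspace} whose unique solution with the given initial data forces $h\equiv 0$, $b\equiv c$ and yields \eqref{eq:propsolspace1} on $(-\infty,t_0+(3k)^{-1})$, and then verify the full structure by checking that the three self-dual forms $\omega_i$ are closed and satisfy $\omega_i\wedge\omega_j=-2\eta_{ij}\,\mathrm{dvol}_M$. The only cosmetic differences are that the paper records the (messier) ODE system before simplification and identifies $\mathcal{I}'_s=\mathcal{I}$ via the metric singularity at the endpoint, while you make the incompleteness argument (finite-length normal geodesic hitting the degeneracy locus) slightly more explicit.
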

\begin{proof}
%\footnote{It can be seen that the K\"ahler forms can be rotated through an orthogonal $\mathrm{SO}(2+\varepsilon,1-\varepsilon)$ matrix so that they take the same form for any Heisenberg frame $\{\partial_t, \fre_i^t\}$. However, we prefer to write $U^t$ with }
%In contrast to the situation for (para)quaternionic K\"ahler spaces, now we cannot bring \emph{a priori} $U^t$ into a more simplified expression because it is not clear whether for two different coframe basis $\{\d t, \fre_t^{j}\}$ and $\{\d t, \tilde{\fre}_t^{j}\}$
%On taking into account this expression, one computes the Lie brackets of the dual frame at time $t$ and next uses this result to obtain the exterior derivative of the two-forms in \eqref{eq:hkformstime}. Imposing this exterior derivatives to vanish, one finds:
In an analogous fashion to the proof of Proposition \ref{prop:hkriemsol}, we consider a K\"ahler form $\omega_3$  invariant under the Heisenberg group, which we know to exist by virtue of Lemma \ref{lemma:hkres}. Writing $\omega_3=\sigma_{3j} \d t \wedge \fre_t^j + \sigma_{3j} \star (\d t \wedge \fre_t^j)$, the parallel condition $\nabla \omega_3=0$ implies that:
\begin{eqnarray*}
\frac{\sigma_{31}'}{\sigma_{32}}&= &\frac{\sigma_{32}'}{\sigma_{31}}=\frac{h(c'-b')+h'(b-c)}{2(bc+h^2)}\, , \\  h'&= &h\frac{kb^2c^2+2 k bc h^2+k h^4+a c c'}{a(bc+c^2+h^2)}=h \frac{k b^2c^2+k h^4+2 k bc h^2+a b b'}{a(bc+b^2+h^2)}\,. \\
b'&=&\frac{k b^3 c^2+ k c h^4+ k bh^2(2 c^2+h^2)+ kb^2 c(c^2+2h^2) -a h^2 c'}{a(bc+c^2+h^2)}\, , \\  c'&=& \frac{k b^3 c^2+ k c h^4+ k bh^2(2 c^2+h^2)+ kb^2 c(c^2+2h^2) -a h^2 b'}{a(bc+b^2+h^2)}\, , \\ a'&=&-k(bc+h^2)\,.
\end{eqnarray*}
This is a system of first-order ordinary differential equations with the initial condition $a(t_0)=b(t_0)=c(t_0)=1$, $h(t_0)=0$ and $\sigma_{3j} \vert_{t_0}=\sigma_{3j}^0$, for $\sigma_{3j}^0 \in \mathbb{R}$. It can be see to admit a unique solution, which turns out to be:
\begin{equation}
\label{eq:spacehksol}
a=(1-3 k (t-t_0))^{1/3}\, ,\quad  b=c=(1-3 k (t-t_0))^{-1/3}\, , \quad h=0\,, \quad \sigma_{3j}=\sigma_{3j}^0\,.
\end{equation}
This solution is defined in the interval $\mathcal{I}'_s=(-\infty, t_0+(3k)^{-1}))$. Ansatz \eqref{eq:ansuhkspace} was in principle valid only for a subinterval $\mathcal{I}'_s \subset \mathcal{I}$ containing $t_0$, but we note that actually $\mathcal{I}'_s=\mathcal{I}$, since at $t=t_0+(3k)^{-1}$ the metric has a singularity and cannot be extended for larger values of $t$. Using now
\eqref{eq:spacehksol}, we observe that the following two-forms:
\begin{equation*}
\omega_i=\d t \wedge \fre_t^i + \star (\d t \wedge \fre_t^i)\, , \quad i=1,2,3
\end{equation*}
are self-dual, closed and satisfy $\omega_i \wedge \omega_j= -2\eta_{ij} \mathrm{dvol}_M$. Hence we conclude.
 %Therefore $\mathcal{I}=\mathcal{I}'_s=(-\infty, {\color{red}t_0}+(3k)^{-1}))$ and we conclude.
\end{proof}
\begin{rem}
We observe that the subsequent pseudo-Riemannian manifold is Ricci-flat and the Weyl tensor is antiself-dual. The metric turns out to be:
\begin{equation*}
g=- \d t^2+ (1-3k (t-t_0))^{2/3}\left ( -\mathfrak{e}_{t_0}^1 \otimes \mathfrak{e}_{t_0}^1+\mathfrak{e}_{t_0}^2 \otimes \mathfrak{e}_{t_0}^2 \right )+ \frac{1}{(1-3 k (t-t_0))^{2/3}} \mathfrak{e}_{t_0}^3 \otimes \mathfrak{e}_{t_0}^3 \,.
\end{equation*}
\end{rem}

\subsection{Lightlike parahyperK\"ahler Heisenberg four-manifolds}

%We consider on a 4-dimensional Heisenberg manifold $(M,g)$ a left-invariant frame basis $\{ \partial_t, \fre^t_u, \fre^t_v, \fre_3^t\}$ such that  $[\fre_v^{t_0},\fre_3^{t_0}]=-2k \fre_u^{t_0}$.
Finally we carry out the classification of all lightlike parahyperK\"ahler Heisenberg four-manifolds. In analogy with the previous cases, we pick a $t_0$-canonical frame $\{\fre_i^t\}_{t \in \mathcal{I}}$ which satisfies \eqref{eq:hkcanlblight} at $t_0 \in \mathcal{I}$ and we choose the ansatz \eqref{eq:anslightsim} for $U_W^t$ (valid for a subinterval $\mathcal{I}'_l \subset \mathcal{I}$ containing $t_0$), which we present here again:
\begin{equation}
U_W^t=\begin{pmatrix}
1 &0 & 0\\
f(t) & b(t) & p(t) \\
0 & 0 & a(t) \\
\end{pmatrix}\,, \quad a,b,c,f,p \in C^\infty(\mathcal{I}'_l)
\label{eq:ansuhklight}
\end{equation}

\begin{prop} \label{prop:hklightsol}
All lightlike parahyperK\"ahler Heisenberg four-manifolds are isometric to $(\mathbb{R}\times \mathrm{H},g)$, where $g$ is the metric \eqref{eq:methinv} constructed from a $t_0$-canonical frame $\{\fre_i^t\}_{t \in \mathbb{R}}$ such that:
\begin{equation}
a=b=1\, ,\quad  f=-2k (t-t_0)\, , \quad p=0\,
\end{equation}
Furthermore, such metric is flat and isometric to $(\mathbb{R}^4, \eta)$ (and therefore, complete).%,{\color{red} \xout{where $\eta=\mathrm{diag}(-1,-1,1,1)$}} (and therefore is complete).
\end{prop}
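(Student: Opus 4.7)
The plan is to adapt the strategy of Propositions \ref{prop:hkriemsol} and \ref{prop:hkspacesol}. Since the Heisenberg center is lightlike, Lemma \ref{lemma:hkres} (applied with $i_1 = u$) guarantees the existence of a Kähler form $\omega_u$ in the parahyperKähler triple whose coefficients $\sigma_{uj}$ depend only on $t$:
\begin{equation*}
\omega_u = \sigma_{uj}(t)\, \d t \wedge \fre_t^j + \sigma_{uj}(t)\, \star(\d t \wedge \fre_t^j).
\end{equation*}

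First I would impose $\nabla \omega_u = 0$ on this ansatz, proceeding as in the timelike and spacelike cases. Using the form \eqref{eq:anslightsim} of $U_W^t$ and a direct computation of the Levi-Civita connection of $g$, this condition translates into a first-order ODE system for the unknowns $(a, b, f, p, \sigma_{uj})$. With the initial conditions $a(t_0) = b(t_0) = 1$, $f(t_0) = p(t_0) = 0$ and $\sigma_{uj}(t_0) = \sigma_{uj}^0$ (an $\mathrm{SO}(\mathbb{R}^3, \eta)$-valued constant row), the existence and uniqueness theorem provides a unique maximal solution. A direct substitution shows that
\begin{equation*}
a(t) = b(t) = 1, \quad f(t) = -2k(t-t_0), \quad p(t) = 0, \quad \sigma_{uj}(t) = \sigma_{uj}^0
\end{equation*}
solves the system on all of $\mathbb{R}$, so $\mathcal{I}'_l = \mathcal{I} = \mathbb{R}$ and the isometry type of $(M,g)$ is completely pinned down. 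A posteriori one checks that the three self-dual forms $\omega_i := \d t \wedge \fre_t^i + \star(\d t \wedge \fre_t^i)$, $i \in \{u,v,3\}$, built from this frame are closed and satisfy $\omega_i \wedge \omega_j = -2\eta_{ij}\, \mathrm{dvol}_M$, confirming via Lemma \ref{lemma:hkres} that $(M,g)$ is indeed parahyperKähler.

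The main obstacle, and the more substantive part of the argument, is showing that the resulting metric is flat and globally isometric to $(\mathbb{R}^4, \eta)$. Using the coordinate realization \eqref{eq:coordh}--\eqref{eq:dualbasisheis} of $\mathrm{H}$, substitution of the above values of $a, b, f, p$ yields
\begin{equation*}
g = -\d t^2 + \d x \odot \d z + kx\, \d x \odot \d y + [4k(t-t_0) - 2ky](\d x)^2 + (\d y)^2.
\end{equation*}
Rather than computing the Riemann tensor directly, I would exhibit an explicit diffeomorphism to $(\mathbb{R}^4, \eta)$ by the successive coordinate changes $\zeta := z + kxy$ (absorbing the cross term $kx\, \d x \odot \d y$), $\tilde y := y - (t-t_0)$ (turning $-\d t^2 + (\d y)^2$ into $\d \tilde y \odot \d t + (\d \tilde y)^2$ and reducing the $(\d x)^2$ coefficient to $-4k\tilde y$), then $\tilde\zeta := \zeta - 2k\tilde y x$ together with $\tilde t := t + kx^2$ (eliminating the remaining $\tilde y (\d x)^2$ term in favour of $\d x \odot \d \tilde y$), and finally $\tilde t' := \tilde t + \tilde y /2$ (completing the square in $\tilde y$). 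After these substitutions the metric becomes
\begin{equation*}
g = \d x \odot \d \tilde\zeta + \d \tilde y \odot \d \tilde t',
\end{equation*}
which is the standard flat neutral-signature metric on $\mathbb{R}^4$ in double-null coordinates. Since the composite coordinate change is a polynomial diffeomorphism from $\mathbb{R}^4$ onto itself, we conclude that $(M,g)$ is globally isometric to $(\mathbb{R}^4, \eta)$, hence flat and complete.
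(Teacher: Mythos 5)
Your classification argument coincides with the paper's: you invoke Lemma~\ref{lemma:hkres} to produce an $\mathrm{H}$-invariant K\"ahler form $\omega_u$ with $\sigma_{uj}\in C^\infty(\mathcal{I})$, turn $\nabla\omega_u=0$ into a first-order system for $(a,b,f,p,\sigma_{uj})$, use uniqueness of solutions with the stated initial data to pin down $a=b=1$, $f=-2k(t-t_0)$, $p=0$, $\sigma_{uj}=\sigma_{uj}^0$, and then verify a posteriori that the three forms $\omega_i=\d t\wedge\fre^i_t+\star(\d t\wedge\fre^i_t)$ are closed and correctly normalized --- exactly the paper's route. Where you genuinely diverge is the last step. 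The paper asserts flatness from a curvature computation and proves completeness by exhibiting all geodesics explicitly in the coordinates \eqref{eq:coordh}, so that the isometry with $(\mathbb{R}^4,\eta)$ follows from the standard fact that a complete, simply connected flat pseudo-Riemannian manifold is pseudo-Euclidean space. You instead reduce the metric, written in the coordinates $(t,x,y,z)$, to the double-null form $\d x\odot\d\tilde\zeta+\d\tilde y\odot\d\tilde t'$ by an explicit polynomial change of variables; I checked the four substitutions and they are correct (after step 1 the coefficient of $(\d x)^2$ is $4k(t-t_0)-4ky$, which step 2 turns into $-4k\tilde y$, and the cross terms produced in step 3 cancel as you claim). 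Since the composite map is a triangular polynomial bijection of $\mathbb{R}^4$, this gives flatness, completeness and the global isometry in one stroke, which is arguably more self-contained than the paper's argument. The only softness in your write-up is that the ODE system is not displayed: as in the paper, part of the condition $\nabla\omega_u=0$ is algebraic rather than differential (the equations determining $\sigma_{uu}$ and $\sigma_{u3}$), and one should note, as the paper does in a footnote, that the apparent divisions by $\sigma_{uv}$ are harmless once $a'=b'=0$ is extracted; this is a presentational gap, not a mathematical one.
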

\begin{proof}
%\footnote{It can be seen that the K\"ahler forms can be rotated through an orthogonal $\mathrm{SO}(2+\varepsilon,1-\varepsilon)$ matrix so that they take the same form for any Heisenberg frame $\{\partial_t, \fre_i^t\}$. However, we prefer to write $U^t$ with }
%In contrast to the situation for (para)quaternionic K\"ahler spaces, now we cannot bring \emph{a priori} $U^t$ into a more simplified expression because it is not clear whether for two different coframe basis $\{\d t, \fre_t^{j}\}$ and $\{\d t, \tilde{\fre}_t^{j}\}$
%On taking into account this expression, one computes the Lie brackets of the dual frame at time $t$ and next uses this result to obtain the exterior derivative of the two-forms in \eqref{eq:hkformsnull}. Imposing this exterior derivatives to vanish, one finds:
%Starting from \eqref{eq:ansuhklight}, we may get the expression of $\fre_i^t$ in terms of $\fre_i^{t_0}$ and thus obtain the Lie brackets of $\fre_i^t$ at any time $t \in \mathcal{I}$. From here we can work out the exterior derivatives of $\omega_i$ and impose them to be identically zero. On doing that, we find the following equations:
Following Lemma \ref{lemma:hkres} and its notation, let $\omega_u$ denote the corresponding K\"ahler form that is additionally invariant under the Heisenberg group, which it is guaranteed to exist. If $\omega_u= \sigma_{uj} \d t \wedge \fre_t^j + \sigma_{uj} \star ( \d t \wedge \fre_t^j)$, then $\omega_u$ being parallel implies:
\begin{eqnarray*}
\sigma_{uu}&=&\frac{1}{2 a b}\left( a b' \sigma_{uu} +(bp'-pb')\sigma_{u3}\right) \, , \quad \sigma_{uv}'=-\frac{\sigma_{uv} b'}{2b}\, , \quad \sigma_{u3}=\frac{\sigma_{uv}(pb'-bp')}{2 ab}\, \\
f' & =&-2 k a b+ \frac{\sigma_{uu} + 2 f\sigma_{uv}}{2 b \sigma_{uv}} b' \, , \quad  p'= \frac{p \sigma_{uv} + a \sigma_{u3}}{b \sigma_{uv}} b'= \frac{p b'}{b}\, , \quad a'= b'=0\,.
\end{eqnarray*}
The unique solution to the previous system of ODEs with the initial conditions $a(t_0)=b(t_0)=1$ and $p(t_0)=f(t_0)=0$ turns out to be\footnote{After using $b'=0$ in the rest of equations, the possible divergences arising from the possibility that $\sigma_{uv}=0$ disappear.}:
\begin{equation}
a=b=1\, , \quad f=-2 k (t-t_0)\, , \quad p=0\, , \quad \sigma_{uj}=\sigma_{uj}^0\, ,
\label{eq:solhklightproof}
\end{equation}
where $\sigma_{uj}^0 \in \mathbb{R}$ are constants. This solution is trivially defined for $t \in \mathbb{R}$, so we can actually extend $\mathcal{I}'_l$ to be the entire $\mathcal{I}$ and $\mathcal{I}'_l=\mathcal{I}=\mathbb{R}$. Using now \eqref{eq:solhklightproof}, we observe that the two-forms:
\begin{equation*}
\omega_i= \d t \wedge \fre_t^i+ \star ( \d t \wedge \fre_t^i) \, , \quad i=u,v,3
\end{equation*}
are self-dual, closed and satisfy $\omega_i \wedge \omega_j =2 \varepsilon \eta_{ij} \d \mathrm{vol}_M$. This way we obtain all lightlike  parahyperK\"ahler Heisenberg four-manifolds, which we easily see to be flat. Finally, after using the coordinates \eqref{eq:coordh}, it is possible to see that all geodesics with coordinates $(t(\tau),x(\tau),y(\tau),z(\tau))$ and affine parameter $\tau$ take the form:
\begin{equation}
\begin{split}
t(\tau)=A_1 +A_2 \tau-k A_3^2 \tau^2\, , \quad & x(\tau)=A_4+A_3 \tau\, , \quad y(\tau)=A_5+A_6 \tau-k A_3^2 \tau^2\, , \\ z(\tau)&=A_7+A_8 \tau+ k A_3 \tau^2( k A_3 x(\tau)-2 A_2+A_6)\,,
\end{split}
\end{equation}
with $A_l \in \mathbb{R}$ for $l=1,2,\dots,8$. Since these geodesics are defined $\forall \tau \in \mathbb{R}$ we conclude that the subsequent pseudo-Riemannian manifolds are complete and therefore all lightlike parahyperK\"ahler Heisenberg four-manifolds are isometric to four-dimensional flat space $(\mathbb{R}^4,\eta)$.

\end{proof}
\begin{rem}
The metric of any lightlike parahyperK\"ahler Heisenberg four-manifold in terms is isometric to:
\begin{equation*}
g=-\d t^2+ \fre^u_{t_0} \odot \fre^v_{t_0}+4k(t-t_0)   \fre^v_{t_0} \otimes  \fre^v_{t_0}+\fre^3_{t_0} \otimes \fre^3_{t_0}\,.
\end{equation*}  
We check by direct inspection that it is indeed flat. 
\end{rem}
\textbf{Proof of Theorem \ref{thm:allhk}}.  Gathering the results given in Propositions \ref{prop:hkriemsol}, \ref{prop:hkspacesol} and \ref{prop:hklightsol}, we conclude.

\end{document}

\section{Einstein-Heisenberg 4-manifolds }
Here we restrict ourselves to the study of those metrics in which the orthogonal complement to the Heisenberg center remains invariant along time evolution. In those cases, the space-time character of the center remains invariant with time evolution and we can choose:
\begin{equation}
\mathfrak{e}_i^t=U_{ij}^t \fre_j^{t_0}\, , \quad \mathfrak{e}_t^i=\fre^j_{t_0} (U^{t})^{-1}_{ji}\, , \quad 
\, U^t: \mathcal{I} \subseteq \mathbb{R} \rightarrow \mathrm{GL}(3,\mathbb{R})\,, \quad
U^t=\begin{pmatrix}
a(t) & 0& 0\\
0 & b(t) & 0 \\
0 & h(t) & c(t)\\ \end{pmatrix}\,.
\end{equation}
We consider first the Riemannian case and Lorentzian case with timelike center. The metric reads:
\begin{equation}
g=\varepsilon dt^2+  \varepsilon a^{-2}(t) \fre_{t_0}^1 \otimes \fre_{t_0}^1+\frac{c^2(t)+h^2(t)}{b^{2}(t)c^2(t)} \fre_{t_0}^2 \otimes \fre_{t_0}^2-\frac{h(t)}{b(t) c^2(t)} \fre_{t_0}^2 \odot \fre_{t_0}^3+ c^{-2}(t) \fre_{t_0}^3 \otimes \fre_{t_0}^3\,.
\label{eq:metricansazt}
\end{equation}
%where $a,b,c,f: C^\infty(M)\rightarrow \mathbb{R}$ are smooth functions which only depend on the time-coordinate and where $e^i$ is the dual 1-form to $e_i$ .% An orthonormal frame $\mathfrak{e}^t$ basis is given by:
%\begin{equation}
%\mathfrak{e}_\alpha^t=\{\p_t,\mathfrak{e}_1^t,\mathfrak{e}_2^t,\mathfrak{e}_3^t\}=\{ \p_t, a(t) e_1, b(t) e_2, c(t) e_3\}\,.
%\label{eq:onbasis}
%\end{equation}
%We will further restrict to those cases in which the Heisenberg center does not evolve with time. Furthermore, we will assume to be of signature $\varepsilon$. Now we compute the Ricci curvature tensor of any metric in the form of \eqref{eq:metricansazt}. 

\begin{cor}
The metric \eqref{eq:metricansazt} is Einstein, $\mathrm{Ric}^g=\Lambda g$ if and only if:
\begin{eqnarray}
\label{eq:eins1}
\varepsilon_1 \Lambda&=&\mu'+\lambda'+\sigma'-\mu^2-\lambda^2-\sigma^2\, , \\
\label{eq:eins2}
\varepsilon_2 \Lambda &=&2 \frac{k^2 b^2 c^2}{a^2}-\varepsilon_2 \varepsilon_1 ( -\mu'+\mu \lambda+\mu \sigma+\mu^2)  \, , \\
\label{eq:eins3}
\Lambda &=&-2 \varepsilon_2 \frac{k^2 b^2 c^2}{a^2}-\varepsilon_1 (-\lambda'+\lambda \sigma+\lambda \mu+\lambda^2)  \, , \\
\label{eq:eins4}
 \Lambda &=&-2 \varepsilon_2 \frac{k^2 b^2 c^2}{a^2}-\varepsilon_1 (-\sigma'+\lambda \sigma+\sigma \mu+\sigma^2) \, .
\end{eqnarray}
\end{cor}

\subsection{Stationary case}

\begin{mydef}
A solution to eqs. \eqref{eq:eins1}-\eqref{eq:eins4} is said to be stationary iff $\mu'=\lambda'=\sigma'=0$.
\end{mydef}

\begin{prop}
\label{prop:statsol}
All stationary solutions are given by:
\begin{equation}
\varepsilon_1=\varepsilon_2=\varepsilon\, , \quad \mu=\pm 2 \sqrt{\frac{-\varepsilon \Lambda}{6}} \, , \quad \lambda=\sigma =\pm  \sqrt{\frac{-\varepsilon \Lambda}{6}}\, , \quad   \Lambda=-6 \varepsilon \frac{k^2 b^2 c^2}{a^2}\,,
\label{eq:statsol}
\end{equation}
with $\varepsilon=\pm 1$. The solutions are  isometric to an open orbit of the solvable Iwasawa subgroup of $\mathrm{SU} (1,2)\cong \mathrm{SU}(2,1)$ on the symmetric space 
\begin{equation} \label{space:eq} \dfrac{\mathrm{SU}\left (\dfrac{3+\varepsilon}{2},\dfrac{3-\varepsilon}{2}\right )}{\mathrm{S}\left (\mathrm{U}(1) \times \mathrm{U}\left (\dfrac{1+\varepsilon}{2},\dfrac{1-\varepsilon}{2}\right )\right )},\end{equation} where $\mathrm{U}(p,q)$ denotes the (pseudo-)unitary group 
of the Hermitian sesquilinear form of index $q$. 

%When $\varepsilon_1=\varepsilon_2=1$, the solution is locally isometric to the symmetric space $\dfrac{\mathrm{SU}(2,1)}{\mathrm{S}(\mathrm{U}(1) \times \mathrm{U}(2))}$, while if $\varepsilon_1=\varepsilon_2=-1$, the solution is locally isometric to the symmetric space $\dfrac{\mathrm{SU}(1,2)}{\mathrm{S}(\mathrm{U}(1) \times \mathrm{U}(1,1))}$, where $\mathrm{SU}(p,q)$ denotes the special unitary group which leaves invariant the diagonal metric with $p$ positive signs and $q$ negative signs.
\end{prop}
\begin{proof}
Since $\mu'=\lambda'=\sigma'=0$, we learn first from \eqref{eq:eins1} that $\Lambda \varepsilon_1 \leq 0$. Summing eqs. \eqref{eq:eins2}-\eqref{eq:eins4}, we find that:
\begin{equation*}
\varepsilon_1 \varepsilon_2 \frac{2 k^2 b^2 c^2}{a^2}=-2 \varepsilon_1 \Lambda-2 \mu \lambda -2\lambda \sigma -2\sigma \mu\,.
\end{equation*}
Consequently, the system of eqs. \eqref{eq:eins1}-\eqref{eq:eins4} can be reexpressed as:
\begin{eqnarray}
\label{eq:einsm1}
\varepsilon_1 \varepsilon_2 \frac{2k^2  b^2 c^2}{a^2}&=&-2 \varepsilon_1 \Lambda-2 \mu \lambda -2\lambda \sigma -2\sigma \mu\,,\\
\label{eq:einsm2}
0&=& 3 \varepsilon_1 \Lambda +\mu^2+3 \mu \lambda +2\lambda \sigma+3 \sigma \mu\,\\
\label{eq:einsm3}
0&=& -\varepsilon_1 \Lambda +\lambda^2-\mu \lambda -\lambda \sigma-2\sigma \mu\,,\\
\label{eq:einsm4}
0&=& -\varepsilon_1 \Lambda +\sigma^2-\lambda \sigma -\sigma \mu -2 \lambda \mu\,.
\end{eqnarray}
Requiring now for consistency $\dfrac{bc}{a}$ at eqs. \eqref{eq:eins1}-\eqref{eq:eins4} to be constant, we encounter the constraint:
\begin{equation*}
\mu=\lambda+\sigma\,.
\end{equation*}
Substituting this condition in eqs. \eqref{eq:einsm3} and \eqref{eq:einsm4}:
\begin{equation*}
2 \sigma^2+4 \lambda \sigma=-\varepsilon_1 \Lambda\, , \quad 2 \lambda^2+4 \lambda \sigma= -\varepsilon_1 \Lambda \,.
\end{equation*}
Noticing that $\varepsilon_1 \Lambda \leq 0$, we infer that $\sigma=\lambda=\pm \sqrt{\dfrac{-\varepsilon_1 \Lambda}{6}}$, which in turn implies $\mu=\pm 2 \sqrt{\dfrac{-\varepsilon_1 \Lambda}{6}}$. Using these results in eq. \eqref{eq:einsm1}, we are led to:
\begin{equation*}
-\varepsilon_1 \Lambda=6 \varepsilon_1 \varepsilon_2 \frac{k^2 b^2 c^2}{a^2}\,.
\end{equation*}
Since $\varepsilon_1 \Lambda \leq 0$, then $\varepsilon_1=\varepsilon_2=\varepsilon=\pm 1$. Finally, after some computations we encounter that these configurations have an antiself-dual Weyl tensor and satisfy that $\nabla \mathrm{R}^g=0$, where $\mathrm{R}^g$ is the Riemann curvature tensor of $g$. In other words, the resulting spaces are quaternionic K\"ahler in the Riemannian case and 
para-quaternionic K\"ahler in the  neutral signature case. Comparing to the classification of pseudo-Riemannian symmetric spaces 
of quaternionic K\"ahler type, see \cite{AC} and \cite{Krahe}, we conclude (comparing curvature tensors) that the resulting spaces are locally isometric to 
the symmetric spaces (\ref{space:eq}). More precisely, the solutions are isometric to a left-invariant metric on the simply transitive solvable Iwasawa subgroup of $\mathrm{SU}(1,2)$ and $\mathrm{SU}(2,1)$ when $\varepsilon=1$ and $\varepsilon=-1$, respectively. 
To see this it suffices to observe that the Heisenberg group is included in a four-dimensional group of isometries, 
which is precisely the above-mentioned Iwasawa group. In fact, the one-parameter group
$t\mapsto t+t_0$, $x\mapsto e^{\l t_0}x$, $y\mapsto e^{\l t_0}y$, $z\mapsto e^{2\l t_0}z$ acts by isometries, 
enlarging the Heisenberg group by a one-parametric group of automorphisms to the aforementioned solvable group. 
\end{proof}
\begin{rem}
By Proposition \ref{prop:statsol}, we have that $a=a_0 e^{2 \lambda t}$, $b=b_0 e^{\lambda t}$ and $c=c_0 e^{\lambda t}$, with $a_0, b_0$ and $c_0$ constants satisfying that $ a_0 ^2\Lambda=-6 \varepsilon k^2 b_0^2 c_0^2$. However, after appropriate time translations and rescalings of the left-invariant frame $\{e_1,e_2,e_3\}$ which preserve the Lie brackets, it can be seen that, up to constant conformal factor, the metric acquires the form of the solution \eqref{eq:statsol} with $a_0=b_0=c_0=1$ and $\Lambda=-6\varepsilon k^2 $. 
\end{rem}
\section{Null Quaternionic Structures}

Another possibility is to fix a left-invariant basis of $H$ $\{e_1,e_2,e_3\}$ whose unique non-vanishing Lie bracket is $[e_2,e_3]=-2 e_1$ and restrict ourselves to the study of metrics of the form:
\begin{equation}
g=\frac{1}{a(u)} \d u\odot  e_1+ \frac{1}{b(u) c(u)}  e_2 \odot e_3\,,
\label{eq:metricansaztlc}
\end{equation}
The Ricci tensor of such metric has a particularly pleasant form:
\begin{equation*}
\mathrm{Ric}^g=\bigg [-2\frac{b^2 c^2 }{a^2}-\frac{(\lambda+\sigma)^2}{2}+\mu \lambda+ \mu \sigma +\lambda'+\sigma' \bigg] \d u \otimes \d u\,.
\end{equation*}
The Weyl tensor vanishes identically for the ansatz \eqref{eq:metricansaztlc}.

	\subsubsection{Positive paraquaternionic K\"ahler Heisenberg four-manifolds}

We now consider the case $\Lambda >0$. Let $\gamma \in \mathbb{R}$ and let $I$ be a connected component of the set:
\begin{equation}
\{ \rho \in \mathbb{R}\, \vert\, \rho \neq 0 , \rho+\gamma>0 \, \, \text{and}\, \, \rho+2\gamma>0\}\,.
\end{equation}
Let $\rho : J \overset{\sim}{\rightarrow} I$, $t \mapsto \rho(t)$ be a maximal solution of the ordinary differential equation:
\begin{equation}
\rho'(t)=2 k Q \rho(t) \sqrt{\frac{\rho(t)+\gamma}{\rho(t)+2\gamma}}\,,
\end{equation}
with initial condition $\rho(0)=\rho_0$ and where $Q>0$, $Q \neq 1$.
\begin{prop}
Let
\begin{equation}
a(t)=2 Q \rho(t) \sqrt{\frac{\rho(t)+2 \gamma}{\rho(t)+\gamma}}\, , \quad b(t)=c(t)=\sqrt{2}Q \frac{\rho(t)}{\sqrt{\rho(t)+2\gamma}}\, ,
\end{equation}
 where $\gamma$ and $\rho_0$ are chosen as the unique real numbers such that $a(0)=b(0)=c(0)=1$. Then:
\begin{equation}
\ric^g=6  k^2 Q^2 g\,.
\end{equation}
\end{prop}
\begin{proof}
The Ricci tensor is obtained through direct computation and we just need to show  that there always exists $(\rho_0,\gamma) \in \mathbb{R}^2$ such that $a(0)=b(0)=c(0)=1$. For that:
\begin{equation}
\label{eq:eqscips}
2 Q \rho_0 \sqrt{\frac{\rho_0+2\gamma}{\rho_0+\gamma}}=1\, , \quad \sqrt{2}Q \frac{\rho_0}{\sqrt{\rho_0+2\gamma}}=1\,.
\end{equation}
We can solve for $\gamma$ in the last equation obtaining $2\gamma=\rho_0(2 Q^2 \rho_0-1)$. Substituting in the first one, we arrive at the following cubic polynomial for $\rho_0$:
\begin{equation}
16 Q^4 \rho_0^3-2 Q^2 \rho_0-1=0\,.
\label{eq:cubicpops}
\end{equation} 
This cubic equation always admit real solutions for $\rho_0$. However, not all real solutions of these equations need to satisfy a posteriori \eqref{eq:eqscips}, since in the process of arriving \eqref{eq:cubicpops} one has squared some expressions. In particular, we find that there is a unique real solution for $\rho_0$ which in turn satisfies \eqref{eq:eqscips} and it reads:
\begin{equation}
\rho_0=\frac{\sqrt[3]{6 Q^2} \left(9 +\sqrt{81 -6 Q^{2}}\right)^{2/3}+6^{2/3} Q^2}{12 Q^2 \sqrt[3]{9 Q^2+ Q^2\sqrt{81-6 Q^{2}}}}>0\,.
\end{equation}
We remark that the latter is real for any value of $Q$. Substituting this expression of $\rho_0$ in $2\gamma=\rho_0(2 Q^2 \rho_0-1)$ we obtain the unique solution $(\rho_0,\gamma)$ and we conclude. 
%The discriminant of this cubic equation is $-\frac{27+2 Q^2}{256 Q^8}<0$, so the previous equation has a unique real solution for $\rho_0$. Therefore there exists a unique $(\rho_0,\gamma) \in \mathbb{R}^2$. 
\end{proof}